\newcommand \fk[1]{{{\mathfrak #1}}}
\newcommand \C[1]{{\mathcal #1}}
\newcommand \wti[1]{{\widetilde {#1}}}
\newcommand \bC{{\mathbb C}}
\newcommand \bF{{\mathbb F}}
\newcommand \bH{{\mathbb H}}
\newcommand \bZ{{\mathbb Z}}
\newcommand \bQ{{\mathbb Q}}
\newcommand \bbq{{\mathsf {q}}}
\newcommand\CA{{\C A}}
\newcommand\CH{{\C H}}
\newcommand\CI{{\C I}}
\newcommand\CO{{\C O}}
\newcommand\al{{\alpha}}
\newtheorem{theorem}{Theorem}[section]
\newtheorem{corollary}[theorem]{Corollary}
\newtheorem{lemma}[theorem]{Lemma}
\newtheorem{proposition}[theorem]{Proposition}
\theoremstyle{definition}
\newtheorem{definition}[theorem]{Definition}
\newtheorem{remark}[theorem]{Remark}
\newtheorem{example}[theorem]{Example}
\newcommand\Hom{\operatorname{Hom}}
\newcommand\End{\operatorname{End}}
\newcommand\Ind{\operatorname{Ind}}
\newcommand\rank{\operatorname{rank}}
\newcommand\tr{\operatorname{tr}}
\newcommand\im{\operatorname{im}}
\newcommand\St{\mathsf{St}}
\newcommand\sem{\mathsf{ss}}
\newcommand\Irr{\mathsf{Irr}}
\newcommand\good{\mathsf{good}}
\newcommand\twist{\mathsf{diff-ind}}
\newcommand\rig{\mathsf{rigid}}
\newcommand\Rk{\mathsf{Rk}}
\newcommand\EL{\mathsf{ell}}
\def\<{\langle} 
\def\>{\rangle}
\def\i{^{-1}}
\def\s{\sigma}
\def\co{\mathcal O}
\def\d{\delta}
\newcommand{\remind}[1]{{\bf \EL** #1 **}}
\numberwithin{equation}{section}
\begin{document}

\title{Cocenters and representations of affine Hecke algebra}

\author{Dan Ciubotaru}
      \address[D.Ciubotaru]{Mathematical Institute\\University of Oxford\\Oxford\\OX2 6GG\\UK}
      \email{dan.ciubotaru@maths.ox.ac.uk}
      
\author{Xuhua He}
\address[X. He]{Department of Mathematics, University of Maryland, College Park, MD 20742, USA and Department of Mathematics, HKUST, Hong Kong}
\email{xuhuahe@math.umd.edu}

\thanks{The authors thank R. Bezrukavnikov, S. Kato, R. Kottwitz, G. Lusztig, E. Opdam, M. F. Vign\'eras, D. Vogan and N. Xi for useful comments about this paper. D. C. was supported in part by NSF-DMS 1302122 and NSA-AMS 111016, and X. H. was supported in part by HKRGC grant 602011.}

\keywords{affine Hecke algebra, cocenter, density theorem,
  trace Paley-Wiener theorem}

\subjclass[2010]{20C08, 22E50}

\begin{abstract}In this paper, we study the relation between the cocenter and the representation theory of affine Hecke algebras. The approach is based on the interaction between the rigid cocenter, an important subspace of the cocenter, and the dual object in representation theory, the rigid quotient of the Grothendieck group of finite dimensional representations.
\end{abstract}

\maketitle

\section{Introduction}\label{sec:1}
Affine Hecke algebras appear naturally in the representation theory of reductive $p$-adic groups as convolution algebras of compactly supported functions, such as Iwahori-Hecke algebras e.g., \cite{Bo},\cite{IM}, and their generalizations in the theory of types, e.g., \cite{HM},\cite{BK}, or as endomorphism algebras of certain projective generators \cite{Be},\cite{He}. The representation theory of affine Hecke algebras with (equal and unequal) parameters that are not roots of unity was extensively studied, e.g., \cite{KL},\cite{CG},\cite{L1,L2},\cite{BM1,BM2}, \cite{Xi}, \cite{Op,OS1,OS2,So},\cite{Re},\cite{Kat}. The representation theory of affine Hecke algebras for parameter equal to a root of unity plays an important role in the study of modular representations of $p$-adic groups \cite{Vi} and the simple modular representations for affine Hecke algebras associated to general linear groups were classified in \cite{AM}. The representations of affine Hecke algebras of classical types at roots of unity were subsequently studied in \cite{VV,SVV} via the theory of canonical bases, and for $G_2$ in \cite{Xi2}. 

In this paper, we consider finite dimensional representations of the affine Hecke algebra $\CH$ from the perspective of its relation with the cocenter of the algebra. The cocenter $\bar\CH$ is the quotient of the algebra by the subspace of commutators, and thus appears naturally in duality (given by the trace) with the (complexification of the) Grothendieck group $R(\CH)_\bC$ of finite dimensional $\CH$-modules.  

\subsection{} We explain our main results. In the body of the paper, we consider the more general case of an affine Hecke algebra with an automorphism $\delta$ of the root system and the $\delta$-twisted cocenter, but, for simplicity, in the introduction, we present the results in the untwisted case only. We also consider arbitrary {(nonzero)} parameters.

{The affine Hecke algebra $\CH$ is a deformation of the group algebra $\bC[\wti W]$ for the extended affine Weyl group $\wti W$. Let $cl(\wti W)$ denote the set of conjugacy classes in $\wti W$. It is easy to see that for any two elements $w, w'$ in the same conjugacy class of $\wti W$, the images of $w$ and $w'$ in the cocenter of $\bC[\wti W]$ are the same and the set $\{[\co]; \co \in cl(\wti W)]\}$ is a basis of the cocenter of $\bC[\wti W]$. Here $[\co]$ is the image of $w$ in the cocenter of $\bC[\wti W]$ for any $w \in \co$. 

However, for $w, w'$ in a conjugacy class $\co$ of $\wti W$, the images in $\bar \CH$ of the standard basis elements $T_w$ and $T_{w'}$ of $\CH$ are not the same in general. It is showed in \cite{HN} that if $w$ and $w'$ are of minimal length in $\co$, then the image of $T_w$ and $T_{w'}$ in $\bar \CH$ are the same. We denote the image by $T_\co$. Moreover, the whole cocenter $\bar \CH$ is spanned by $\{T_\co; \co \in cl(\wti W)\}$. 

Note that $cl(\wti W)$ is a countable infinite set and the set of irreducible representations of $\CH$ is an uncountable infinite set. To compare the cocenter with representations, we would like to develop a reduction method from infinite sets to finite sets. 

A familiar object in the literature is the elliptic quotient $\bar R_0(\CH)$ obtained by taking the quotient of $R(\CH)_\bC$ by the span of all proper parabolically induced modules \cite{BDK}. The elliptic representation theory of reductive $p$-adic groups and associated Hecke algebras has been an area of active research, e.g., \cite{Ar}, \cite{BDK}, \cite{Bez}, \cite{Kaz}, \cite{SS}, \cite{Re}, \cite{OS2}. The dual object to $\bar R_0(\CH)$ is the elliptic cocenter $\bar\CH^{\EL}$,  the subspace of $\bar\CH$ on which all proper parabolically induced modules vanish. However, as shown in \cite{BDK}, the elliptic cocenter is very complicated to understand. 
}

{\subsection{} The solution we provide in this paper is another quotient of $R(\CH)_\bC$ and another subspace of $\bar \CH$, which we call the rigid quotient and rigid cocenter, respectively. 

Let us first describe our motivation leading to the definitions and then describe the main result and its consequences. 

For simplicity, we only consider the affine Hecke algebras associated to semisimple root datum in this introduction. In the beginning, we would like to get a nice finite subset of $cl(\wti W)$. A natural choice is $cl(\wti W)_{\EL}$, the set of elliptic conjugacy classes of $\wti W$. The problem is that there is no relation between the subspace spanned by $\{T_\co; \co \in cl(\wti W)_{\EL}\}$ and the elliptic cocenter. 

The finite subset of $cl(\wti W)$ we use here is $cl(\wti W)_0$, the conjugacy classes of $\wti W$ with zero Newton point. We have the inclusion $cl(\wti W)_{\EL} \subset cl(\wti W)_0 \subset cl(\wti W)$. The definition of $cl(\wti W)_0$ and the idea to use it in the study of affine Hecke algebras are inspired by a classical result in arithmetic geometry: Kottwitz's classification of $\s$-isocrystals, \cite{Ko1,Ko2}. 

Here we provide more content. While this is not needed in our theory of rigid cocenter and rigid quotient, it serves as motivations for it. 

Let $G$ be a connected reductive group split over $\bQ_p$ and $L=W(\bar \bF_p)[\frac{1}{p}]$ be the completion of the maximal unramified extension of $\bQ_p$. Let $\s$ be the (relative) Frobenius morphism on $G(L)$ and $B(G)$ be the set of $\s$-conjugacy classes of $G(L)$. Kottwitz showed that a $\s$-conjugacy class is determined by two invariants given by the image of the Kottwitz map $B(G) \to \pi(G)$ and the Newton map from $B(G)$ to the rational coweight lattice. A $\s$-conjugacy class is called basic if its Newton point is zero. Moreover, any $\s$-conjugacy class comes from a basic $\s$-conjugacy class of some Levi subgroup $M(L)$ of $G(L)$ via the inclusion map $M \to G$. 

In \cite{He99}, one studied the natural map $cl(\wti W) \to B(G)$. This map is finite-to-one and it is compatible with both the Kottwitz map and the Newton map. Thus we have a Cartesian diagram \[\xymatrix{cl(\wti W)_0 \ar[r] \ar[d] & B(G)_{basic} \ar[d] \\ cl(\wti W) \ar[r] & B(G),}\] where $B(G)_{basic}$ is the set of basic $\s$-conjugacy classes. 

As the basic $\s$-conjugacy classes capture the whole $B(G)$ in an essential way, we expect that $cl(\wti W)_0$ plays an essential role in the study of $\bar \CH$. This leads to the following definition of rigid cocenter: $$\bar \CH^\rig:=\text{span}\{T_\co; \co \in cl(\wti W)_0\}.$$

It turns out that for generic parameters, the quotient of $R(\CH)_\bC$ dual to $\bar \CH^\rig$ is also a natural object. Namely, $\bar R(\CH)_\rig$ is defined as the quotient of $R(\CH)_\bC$ by $R(\CH)_\twist$, the span  of differences of central twists of parabolically induced modules, see Definition \ref{d:Hecke-rigid}. 
}

\smallskip

The main result concerning the rigid cocenter is the following theorem.

\begin{theorem}\label{t:main-rigid}
\begin{enumerate}
\item The set $\{T_\CO:\CO\in cl(\wti W)_0\}$ is a basis of $\bar \CH^\rig$ for an affine Hecke algebra $\CH$ with arbitrary parameters.
\item Suppose that the parameters of the affine Hecke algebra are admissible in the sense of Definition \ref{d:generic}. Then the trace pairing $\tr:\bar \CH\times  R(\CH)_\bC\to\bC$ induces a perfect pairing $$\tr:\bar \CH^\rig\times \bar R(\CH)_\rig\to\bC.$$
In particular, the dimension of $\bar R(\CH)_\rig$ equals the number of classes in $cl(\wti W)_0.$
\item For arbitrary parameters, the trace map $\tr:\bar \CH^\rig\to R(\CH)_\rig^*$ is surjective.
\end{enumerate}
\end{theorem}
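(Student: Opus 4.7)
The plan is to deduce (1) immediately from the basis theorem of He--Nie, establish (2) in three steps (orthogonality with the twist-induced subspace, matching dimensions, and abstract non-degeneracy), and reduce (3) to the admissible case by a specialization argument.

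For (1), $\bar\CH^\rig$ is defined as the span of $\{T_\CO : \CO \in cl(\wti W)_0\}$, so only linear independence needs proof. This is immediate from the basis theorem of \cite{HN}, which refines the spanning statement quoted in the introduction to the stronger assertion that $\{T_\CO : \CO \in cl(\wti W)\}$ is a basis of $\bar\CH$ for arbitrary parameters; a subset of a basis is linearly independent.

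For (2), the key step is the orthogonality $\tr(\bar\CH^\rig, R(\CH)_\twist) = 0$. A generator of $R(\CH)_\twist$ has the form $[\pi\otimes\chi] - [\pi]$ with $\pi = i_M^\CH(\sigma)$ a proper parabolic induction and $\chi$ a character of $Z(\CH_M)$ trivial on $Z(\CH)$, i.e., dual to the quotient lattice governing the Newton point. Evaluating the character of $i_M^\CH(\sigma)$ on a minimal-length representative $w$ of $\CO \in cl(\wti W)_0$ via the induced-character formula, the $\chi$-dependence enters only through the translation parts of those $M$-conjugates of $w$ that lie in $\wti W_M$, and the zero Newton point condition forces the resulting weighted sum to vanish. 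Hence $\tr$ descends to a pairing $\bar\CH^\rig \times \bar R(\CH)_\rig \to \bC$. Under admissibility the ambient pairing $\bar\CH \times R(\CH)_\bC \to \bC$ is perfect (trace Paley--Wiener theorem together with the density theorem), and matching the Newton-point stratification of $cl(\wti W)$ with the induction filtration of $R(\CH)_\bC$ (via the Kottwitz-style correspondence sketched in the introduction) gives $\dim \bar R(\CH)_\rig = |cl(\wti W)_0| = \dim \bar\CH^\rig$. A perfect pairing on the ambient spaces restricted to complementary equidimensional subquotients remains perfect.

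For (3), admissibility is a Zariski-open condition on the parameter vector. The source $\bar\CH^\rig$ has constant dimension by (1), and the rigid quotient is designed so that $\dim \bar R(\CH)_\rig \le |cl(\wti W)_0|$ holds at every parameter. A standard specialization argument in the universal parameter family then propagates the surjectivity from the admissible locus provided by (2) to all parameters. The main obstacle I anticipate is the orthogonality and dimension-matching step in (2): controlling $R(\CH)_\twist$ explicitly and matching it combinatorially to the non-rigid part of $cl(\wti W)$ requires a careful analysis of character values on the minimal-length class basis, and this is the technical heart of the theorem.
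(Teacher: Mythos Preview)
Your proposal has a fundamental circularity problem. You invoke ``the basis theorem of \cite{HN}'' for (1) and the density theorem for (2), but neither is available as input. According to the paper (see the discussion after Theorem \ref{IMbasis}), \cite[Theorem 6.7]{HN} establishes the basis statement only in the equal-parameter case; the arbitrary-parameter case is one of the main new results of \emph{this} paper (Theorem \ref{t:basis}), and its proof (\S\ref{s:proofs}) uses the rigid-pairing theorem, i.e., part (2) of the very statement you are trying to prove. Likewise, the density theorem for admissible parameters (Theorem \ref{t:density}) is proven in the paper by the same inductive scheme, again resting on (2). So your argument for (1) and the ``ambient perfect pairing'' step in (2) assume results that are logically downstream of the theorem.

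Even granting density, your deduction of (2) has a gap. Your orthogonality step only gives $\bar\CH^\rig\subseteq\operatorname{Ann}(R(\CH)_\twist)$; the assertion ``a perfect pairing on the ambient spaces restricted to complementary equidimensional subquotients remains perfect'' requires the \emph{reverse} inclusion, i.e., that every $h$ with $\tr(h,R(\CH)_\twist)=0$ already lies in $\bar\CH^\rig$. That is the hard direction of Theorem \ref{rigid-cocenter} in the paper and needs its own argument (there, an induction on $J_\CO$ using the Bernstein--Lusztig presentation and the Mackey formula). Your dimension-matching claim $\dim\bar R(\CH)_\rig=|cl(\wti W)_0|$ is also not a formality: in the paper it is Proposition \ref{elliptic}(2), proved via Lusztig's reduction to graded Hecke algebras and the graded results of \cite{CH}. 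Finally, for (3), the specialization argument is not justified: the rank of $\tr$ and the dimension of $\bar R(\CH)_\rig$ can both drop at special parameters, and nothing prevents the rank from dropping more; $\bar R(\CH)_\rig$ does not form a flat family over parameter space.

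The paper's route is the reverse of yours: it first proves (2) under a parabolic-density hypothesis (PDT) by exhibiting a block upper-triangular structure of the pairing over the decompositions of Proposition \ref{p:rigid-cocenter} and Corollary \ref{c:rigid-section}, with diagonal blocks controlled by the elliptic theory of the $\CH_J^\sem$. Then (1), density, and hence (PDT) are established simultaneously by induction on the semisimple rank (\S\ref{s:proofs}). Part (3) follows from the same block structure together with Proposition \ref{p:finite-elliptic}, both of which hold for all parameters once the basis theorem is in place (see \S\ref{8.3}); no specialization argument is needed.
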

{Theorem \ref{t:main-rigid} in particular explains where the name ``rigid'' for $\bar \CH^\rig$ comes from:
\begin{itemize}
\item The traces of any given element in $\bar \CH^\rig$ on the parabolically induced modules are constant if we deform the central characters and the image under the trace map of $\bar \CH^\rig$ gives all such ``rigid'' linear functions on the induced modules. 
\item The rigid cocenter $\bar \CH^\rig$ has a basis which is independent of the parameters of the Hecke algebra and thus $\bar \CH^\rig$ is ``rigid'' if we deform the parameters. 

\end{itemize}
}

As far as the relation between the rigid and elliptic cocenters/quotients, clearly, $\bar\CH^{\EL}\subset\bar\CH^\rig$ and $\bar R(\CH)_\rig\twoheadrightarrow\bar R_0(\CH).$ But as we show in the paper, see Proposition \ref{p:rigid-cocenter}, the rigid cocenter, in fact, combines together the elliptic cocenters of all the semisimple parts of parabolic subalgebras, up to a certain equivalence. Dually, the rigid quotient admits a section formed by the elliptic quotients of the semisimple parts of parabolic subalgebras (up to equivalence), see Corollary \ref{c:rigid-section}. Thus the rigid cocenter/quotient allows us to study the elliptic theory for all the parabolic subalgebras at once. 

\smallskip

{As a consequence, we obtain the basis theorem of $\bar \CH$.

\begin{theorem}
The set $\{T_\CO:\CO\in cl(\wti W)\}$ is a basis of $\bar \CH$ for an affine Hecke algebra $\CH$ with arbitrary parameters.
\end{theorem}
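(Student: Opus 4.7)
The spanning half has already been proved in \cite{HN}, as recalled at the top of Section~\ref{sec:1}: every $T_w$ is equivalent modulo commutators to some $T_\CO$ with $\CO \in cl(\wti W)$. So what is at stake is the linear independence of $\{T_\CO : \CO \in cl(\wti W)\}$ in $\bar\CH$.

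The natural strategy is to stratify $cl(\wti W)$ by Newton point and deduce the full basis theorem from Theorem~\ref{t:main-rigid}(1) applied to all parabolic Levi subalgebras. Following the Kottwitz analogy developed in the introduction, every $\wti W$-conjugacy class $\CO$ ought to come from a canonical pair $([M], \CO_M)$ with $M$ a standard Levi (well-defined up to $W$-conjugacy) and $\CO_M \in cl(\wti W_M)_0$ a zero-Newton class whose Newton point is ``regular'' in $Z(M)$ (i.e.\ does not lie in the center of any smaller Levi). Writing $cl(\wti W_M)_0^{\mathrm{fund}}$ for this locus of ``genuinely $M$-rigid'' classes, one expects a bijection
\[
cl(\wti W) \;\longleftrightarrow\; \bigsqcup_{[M]}\, cl(\wti W_M)_0^{\mathrm{fund}},
\]
which is precisely the $\wti W$-analogue of the basic-in-some-Levi decomposition of $B(G)$.

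On the algebraic side, I would construct the parabolic induction maps $i_M^G\colon \bar\CH_M \to \bar\CH$ from the inclusions $\CH_M \hookrightarrow \CH$, and verify that under $i_M^G$ the basis element $T_{\CO_M}^M \in \bar\CH_M^\rig$ is sent to $T_{(\CO_M)^G}^G \in \bar\CH$, where $(\CO_M)^G$ denotes the $\wti W$-class generated by $\CO_M$. This is the natural extension to the full cocenter of the parabolic decomposition of the rigid cocenter established in Proposition~\ref{p:rigid-cocenter}: just as the rigid cocenter packages together the elliptic cocenters of all Levis up to equivalence, the full cocenter should analogously package together the rigid cocenters of all Levis. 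Combining this with the bijection above and Theorem~\ref{t:main-rigid}(1) applied separately to each $\bar\CH_M^\rig$ realises every $T_\CO$ as the image of a basis element of $\bar\CH_M^\rig$ for a unique class $[M]$.

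The main technical obstacle is showing that this assembly
\[
\bigoplus_{[M]}\, i_M^G\bigl(\bar\CH_M^{\rig,\,\mathrm{fund}}\bigr) \longrightarrow \bar\CH
\]
is actually a direct sum, i.e.\ that parabolic inductions from inequivalent Levi classes contribute linearly disjoint pieces. My plan is to induct on the semisimple rank of $M$: the extremal case $M = G$ is exactly Theorem~\ref{t:main-rigid}(1), and the inductive step relies on the fact that the Newton point is a genuine invariant of a $\wti W$-conjugacy class and is compatible with parabolic induction. This compatibility filters $\bar\CH$ by the dominance order on Newton points and separates the contributions of different Levi classes. Once the direct-sum decomposition is in place, the bijection above identifies the resulting basis with $\{T_\CO : \CO \in cl(\wti W)\}$ and completes the proof.
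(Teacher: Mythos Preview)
Your outline correctly identifies the combinatorial scaffolding: the bijection $cl(\wti W) \leftrightarrow \bigsqcup_{[J]} cl(\wti W_J)_0^{\mathrm{fund}}/{\sim}$ and the identity $T_\CO = \bar i_J(T^J_{\CO'})$ are precisely the content of \S\ref{3.6}--\S\ref{BLbasis1} (Theorem~\ref{BLbasis}, from \cite{HN2}). The difficulty is entirely in your final paragraph, and there the argument has a genuine gap.

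You assert that the Newton point ``filters $\bar\CH$ by the dominance order'' and that this separates the Levi contributions. But the Newton point is an invariant of conjugacy classes in $\wti W$, not of elements of the cocenter $\bar\CH$. One can of course set $F_\lambda = \text{span}\{T_\CO : \nu_\CO \le \lambda\}$, but the claim that this filtration \emph{splits}---equivalently, that $\sum_\CO a_\CO T_\CO = 0$ forces $\sum_{\nu_\CO = \lambda} a_\CO T_\CO = 0$ for each $\lambda$---is exactly the linear independence you are trying to prove. Nothing in your proposal supplies a mechanism (a projection, a pairing, a degeneration) that reads off the Newton point of a cocenter element, and without one the induction cannot get off the ground. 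Relatedly, you implicitly need $\bar i_J$ to be injective on the relevant piece of $(\bar\CH_J^\rig)_{N_J}$; this is again not known a priori and is in fact part of what is being established.

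The paper closes this gap via representation theory rather than combinatorics. It specializes $\bbq$ to an admissible parameter and uses the trace against differences of induced modules $i_J(\sigma_t - \sigma_{t'})$ to isolate the contribution of each Newton stratum (this is the mechanism in the proof of Theorem~\ref{rigid-cocenter}), running an induction on rank in which the Basis Theorem and the Density Theorem for proper parabolics are proved simultaneously. Once linear independence is known at admissible---hence generic---parameters, it transfers to the indeterminate $\bbq$ formally. Note also that in the paper's logic Theorem~\ref{t:main-rigid}(1) is not available as an independent input: the linear independence of $\{T_\CO : \CO\in cl(\wti W)_0\}$ is itself extracted from this same inductive argument via Theorem~\ref{t:rigid-pairing} (see \S\ref{s:proofs}), so treating it as a black box is circular unless you can supply a separate proof of it first.
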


This theorem in particular shows that the cocenter of $\CH$ has a description independent of the parameters. For finite Hecke algebras, Tits' deformation theorem says that for generic parameters, the finite Hecke algebra is isomorphic to the group algebra. However, affine Hecke algebras of the same type but different parameters are almost never isomorphic. The basis theorems for $\bar \CH$ and $\bar \CH^\rig$ above, in some sense, provide a substitute for the Tits' deformation theorem in the affine setting. 
}

\subsection{}As consequences of our approach, we obtain direct, algebraic proofs of the analogues of classical results from $p$-adic groups: the Density Theorem and the trace Paley-Wiener Theorem. For $p$-adic groups, proofs of these results are known from \cite{BDK} and \cite{Ka}, see also \cite{Da} and \cite{Fl}. For affine Hecke algebras with positive parameters, the Density Theorem and trace Paley-Wiener theorem are studied in \cite[Theorem 3.4]{So} by different methods.

We summarize our results in the next theorem.

\begin{theorem}\label{t:main-density}

\begin{enumerate}
\item (Density Theorem) Suppose that the parameters of the affine Hecke algebra are admissible. Then 
the trace map $\tr:\bar \CH\to R(\CH)^*$ is injective.
\item (Trace Paley-Wiener Theorem) For arbitrary parameters, the image of the trace map $\tr:\bar \CH\to R(\CH)^*$ is the space $R^*(\CH)_\good$ of good forms (Definition \ref{d:goodforms}).
\end{enumerate}
\end{theorem}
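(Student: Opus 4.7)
The plan is to leverage Theorem \ref{t:main-rigid} by a systematic use of parabolic induction, reducing assertions about the full cocenter $\bar\CH$ to the rigid cocenters of Levi (parabolic) subalgebras. The guiding principle, echoing the Kottwitz picture sketched in the introduction, is that every $\wti W$-conjugacy class arises, via the natural inclusion $\wti W_P \hookrightarrow \wti W$, from a zero-Newton-point class of some parabolic subgroup $\wti W_P$. Translated to Hecke-algebra cocenters, this should yield a surjection
$$\sum_P i_P : \bigoplus_P \bar\CH_P^\rig \twoheadrightarrow \bar\CH,$$
where $P$ ranges over (a suitable set of) standard parabolic subsets and $i_P : \bar\CH_P \to \bar\CH$ is an induction map on cocenters (sending the $\wti W_P$-class of $w$ to the $\wti W$-class of $w$). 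Establishing this decomposition, using the minimal-length machinery of \cite{HN}, is the essential first step.

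For the Density Theorem, suppose $\tr(h,\pi)=0$ for all $\pi\in\Irr(\CH)$, and write $h=\sum_P i_P(h_P)$ with $h_P\in\bar\CH_P^\rig$. The adjunction between parabolic induction on the cocenter side and a Jacquet-style restriction functor $r_P$ on the representation side (``Frobenius reciprocity for traces'') gives $\tr(i_P(h_P),\pi)=\tr(h_P,r_P(\pi))$. Arguing by induction on the semisimple rank of $P$, starting from the minimal Levi, the vanishing hypothesis forces each $h_P$ to annihilate the image of the restriction functor inside $\bar R(\CH_P)_\rig$; since this image spans $\bar R(\CH_P)_\rig$ (by varying $\pi$ over enough tempered or standard modules), Theorem \ref{t:main-rigid}(2) applied to $\CH_P$ with admissible parameters then yields $h_P=0$.

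For the Trace Paley-Wiener Theorem, the inclusion $\im(\tr)\subseteq R^*(\CH)_\good$ is to be checked directly from Definition \ref{d:goodforms}, as the defining properties encode exactly the formal symmetries any character already satisfies (central-twist behaviour, invariance under intertwining operators, etc.). For the reverse inclusion, take $f\in R^*(\CH)_\good$. Using the dual parabolic structure on $R(\CH)_\bC$, restrict $f$ to produce, for each $P$, a functional on $\bar R(\CH_P)_\rig$. By Theorem \ref{t:main-rigid}(3), valid for arbitrary parameters, each such functional is realized as the trace against some $h_P\in\bar\CH_P^\rig$. Setting $h=\sum_P i_P(h_P)$, the goodness condition on $f$ is exactly what is needed to ensure these local choices are compatible and glue to give $\tr(h,\cdot)=f$.

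The main obstacle is the first step: proving that the parabolic induction maps $i_P$ are well-defined at the cocenter level and that their joint image hits all of $\bar\CH$. This is the cocenter-level shadow of Kottwitz's ``basic-from-Levi'' principle, and the key technical input is that any $T_\CO$ can be expressed, using minimal-length representatives and the braid/quadratic relations of \cite{HN}, as $i_P(T_{\CO_0})$ for some $\CO_0\in cl(\wti W_P)_0$. A secondary subtlety is setting up the Frobenius reciprocity between $i_P$ and the restriction functor $r_P$ uniformly in the parameters; once both points are in place, both parts of Theorem \ref{t:main-density} follow essentially mechanically from Theorem \ref{t:main-rigid}.
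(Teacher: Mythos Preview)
Your overall architecture—parabolic decomposition of $\bar\CH$ via $\sum_P i_P(\bar\CH_P^\rig)$, adjunction, induction—matches the paper's, and the surjection you want is exactly Theorem~\ref{BLbasis} (the Bernstein--Lusztig presentation $T_\CO=\bar i_J(T^J_C)$). However, there are two genuine gaps.

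\textbf{Density: the separation step is missing.} Knowing $\sum_P \tr(h_P,r_P(\pi))=0$ for all $\pi$ does not by itself let you peel off each $h_P$; the terms interact, and ``induction on the semisimple rank starting from the minimal Levi'' is not a mechanism. The paper separates the pieces by Newton point: for the \emph{minimal} $J$ with $J_\CO=J$ appearing, one tests against $i_J(\sigma\circ\chi_t)-i_J(\sigma\circ\chi_{t'})$ and observes that the $J_\CO=J$ part is strictly more regular in $(t,t')$ than the rest, forcing it to vanish separately (proof of Theorem~\ref{rigid-cocenter}). This regularity-in-central-twist argument is the engine, and you have not supplied a substitute. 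A related point you do not address: Theorem~\ref{t:main-rigid}(2), which you invoke for $\CH_P$, is itself proved under (PDT), i.e.\ assuming Density for proper parabolics. The paper resolves this circularity by proving the Basis Theorem and Density Theorem by \emph{simultaneous} induction (\S\ref{s:proofs}); your sketch treats Theorem~\ref{t:main-rigid} as a black box already available.

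\textbf{Trace Paley--Wiener: good forms do not descend to the rigid quotient.} You propose to ``restrict $f$ to produce, for each $P$, a functional on $\bar R(\CH_P)_\rig$'' and then apply Theorem~\ref{t:main-rigid}(3). But $i_P^*(f)$ lands in $R^*(\CH_P)$, and goodness only says $t\mapsto f(i_K(\sigma\circ\chi_t))$ is \emph{regular}, not \emph{constant}; so $i_P^*(f)$ need not vanish on $R(\CH_P)_\twist$ and does not factor through $\bar R(\CH_P)_\rig$. The paper avoids this entirely: it works with the \emph{elliptic} quotient instead, first proving that $\tr:\bar\CH^{\EL}_\delta\to\bar R_0^\delta(\CH)^*$ is surjective (this is where finiteness from Proposition~\ref{p:finite-elliptic} enters), then subtracting off the elliptic part and using the BDK operator $A^*$ to express the remainder through proper parabolics (Theorem~\ref{P-W1}). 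Your gluing step would have to reinvent this $A$-operator machinery.
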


{We would like to point out that the density theorem fails for affine Hecke algebras at some roots of unity but the trace Paley-Wiener theorem always holds. The trace Paley-Wiener Theorem for affine Hecke algebras at roots of unity seems to be a new result and will play a role in the modular representations of $p$-adic groups. }

\

\subsection{}
As a different application, in section \ref{s:unitary}, we show how Theorem \ref{t:main-rigid} can be used to simplify the arguments of \cite{BM1,BM2} for the preservation of unitarity under the Borel functor for the category of smooth representations with Iwahori fixed vectors of a semisimple $p$-adic group $G$. 
Let $I$ be an Iwahori subgroup and if $H\subset G$ is a closed subgroup containing $I$, let $\CH(H//I)$ denote the Iwahori-Hecke algebra of $I$-biinvariant compactly supported functions with support in $H$. The Borel-Casselman correspondence says that the functor $V\to V^I$ is an equivalence between the subcategory of smooth complex $G$-representations generated by their $I$-fixed vectors and the category of $\CH(G//I)$-modules. The main result of \cite{BM1,BM2}, see also \cite{BC},  is the following theorem conjectured by Borel.

\begin{theorem}[Barbasch-Moy preservation of unitarity]\label{t:BM-intro} Under the functor $V\to V^I$, the $G$-representation $V$ is unitary if and only if the $\CH(G//I)$-module $V^I$ is unitary.
\end{theorem}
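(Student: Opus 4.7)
The plan is to adapt the Barbasch--Moy signature character strategy, but to use Theorem \ref{t:main-rigid} to bypass the most delicate compatibility arguments. The forward direction is standard: if $(V,(\cdot,\cdot))$ is unitary, the restriction of $(\cdot,\cdot)$ to $V^I$ is positive definite and invariant under the star operation $f^*(g) = \overline{f(g\i)}$ on $\CH(G//I)$, since this star operation is precisely compatible with convolution acting by adjoints.

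For the reverse direction, suppose $V^I$ is $*$-unitary. The Borel--Casselman equivalence $V \mapsto V^I$ intertwines the Hermitian dual functors on the two sides, so $V \cong \ovl{V}$ carries some invariant non-degenerate Hermitian form $B$. Following Vogan, define signature characters $\operatorname{sig}(V,B) = \Theta_V^+ - \Theta_V^- \in R(G)_\bC$, and likewise $\operatorname{sig}(V^I, B|_{V^I}) \in R(\CH(G//I))_\bC$; unitarity amounts to the vanishing of the negative part. The task is to show that $\Theta_{V^I}^- = 0$ forces $\Theta_V^- = 0$. Using the Langlands classification to express both sides in terms of unitarily induced standard modules, one reduces to matching signatures at the level of tempered characters of Levi subgroups, together with a compatibility of Hermitian forms under parabolic induction.

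Here Theorem \ref{t:main-rigid} supplies a clean finitary framework. The signature character of any Hermitian $\CH$-module is constant under continuous unitary deformation of the central character of an inducing datum and so descends to a well-defined element of the rigid quotient $\bar R(\CH)_\rig$. By part (2) of Theorem \ref{t:main-rigid}, such an element is determined by its pairing with the finite basis $\{T_\CO : \CO \in cl(\wti W)_0\}$ of $\bar\CH^\rig$. Moreover, the decomposition of $\bar\CH^\rig$ along the elliptic cocenters of the semisimple parts of parabolic subalgebras (Proposition \ref{p:rigid-cocenter}) corresponds, on the dual side, to induction-by-stages through the section built in Corollary \ref{c:rigid-section}. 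Thus the required signature identities reduce to a finite check, one for each pair consisting of a standard Levi and an elliptic conjugacy class of its finite Weyl group, where positivity of the relevant traces on tempered and discrete-series modules is the classical input from Barbasch--Moy.

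The main obstacle I expect is establishing the compatibility of the Hermitian structures and star operations between the $G$-side and the Hecke-algebra side under parabolic induction, i.e.\ showing that a $*$-invariant form on an induced Hecke module agrees with the form obtained by restricting the induced $G$-representation's form to the Iwahori-fixed subspace. In the original Barbasch--Moy argument this was the subtle point; the rigid-cocenter approach sidesteps it by replacing a compatibility across entire categories with a compatibility at the level of the finite-dimensional rigid pieces, where both sides are controlled by traces on the parameter-independent basis $\{T_\CO\}$ of part (1) of Theorem \ref{t:main-rigid}.
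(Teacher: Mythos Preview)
Your proposal has the right overall shape---signature characters expressed in a tempered basis of the rigid quotient, then the perfect pairing of Theorem \ref{t:main-rigid}(2) to force the negative coefficients to vanish---but it is missing the concrete mechanism that makes the argument work, and misidentifies the obstacle.

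The gap is this: you never explain how unitarity of $V^I$ translates into vanishing of the pairing of the negative signature part against each $T_\CO$, $\CO\in cl(\wti W)_0$. Saying ``the signature character descends to $\bar R(\CH)_\rig$ and is determined by its pairing with $\{T_\CO\}$'' is true, but you need a reason why unitarity of $V^I$ kills these pairings. The paper's answer is the one fact you omit entirely: every conjugacy class $\CO$ with Newton point zero contains an element of a \emph{finite} parahoric subgroup $\wti W'_K$ for some maximal parahoric $K\supset I$, so $T_\CO$ lies in the finite Hecke subalgebra $\CH_K=\CH(K//I)$. Unitarity of $V^I$ is precisely positivity of the $\CH_K$-signature for every $K$; hence the negative part $\sum_i b_i\theta_{\CH_K}(V_i^I)$ vanishes as an $\CH_K$-character, and in particular $\sum_i b_i\,\tr(T_\CO,V_i^I)=0$ for every $\CO$. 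Then the perfect pairing forces $b_i=0$. Without the parahoric-support observation there is no link between ``$V^I$ is unitary'' and ``traces against the rigid cocenter vanish,'' and your reduction to ``a finite check'' has no content.

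Your identified obstacle---compatibility of Hermitian forms under parabolic induction between the $G$-side and the $\CH$-side---is not actually where the difficulty lies. The Jantzen-filtration expression of the signature in the tempered basis (the analogue of Vogan's theorem) works identically on both sides because the Borel functor is an equivalence commuting with parabolic induction; the integers $a_i,b_i$ are literally the same. The detour through Proposition \ref{p:rigid-cocenter} and Corollary \ref{c:rigid-section} is also unnecessary: once you have the parahoric-support fact, the direct perfect pairing of Theorem \ref{t:rigid-pairing}(1) finishes the argument in one line.
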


The classical arguments of \cite{BM1,BM2} use two main ingredients. The first is an analogue of Vogan's signature character \cite{Vo} which expresses the signature of an irreducible hermitian representation in terms of the $K$-character ($K$ a maximal compact open subgroup) of tempered modules. The second ingredient is a linear independence of the $\CH(K_0//I)$-characters ($K_0$ hyperspecial) of irreducible tempered representations with "real infinitesimal character" combined with a subtle reduction to real infinitesimal character in \cite{BM2} via endoscopic groups. The rigid quotient provides for a simplification of the argument (without the need for the reduction to real infinitesimal character) by: firstly, considering the $K$-characters for all maximal parahoric subgroups, and, secondly, the fact that, by Theorem \ref{t:main-rigid}(2), the characters of a basis of the rigid quotient (the basis can be chosen to consist of tempered representations) are linearly independent when restricted to the union of $\CH(K//I)$'s.

\subsection{} We give an outline of the paper. In sections \ref{sec:2} and \ref{sec:clifford}, we establish the notation and give the basic definitions for the affine Hecke algebra (extended by outer automorphisms), the cocenter and $R(\CH)$. In section \ref{sec:3}, we recall in our setting the main definitions and properties of the induction and restriction functors, as introduced for $p$-adic groups in \cite{BDK} and studied further in \cite{Da}. In section \ref{sec:span-cocenter}, we record the main  cocenter results from \cite{HN,HN2} that we will use in the rest of the paper. In sections \ref{s:6} and \ref{s:7}, we define the rigid and elliptic cocenters and the rigid and elliptic quotients of $R(\CH)$, and prove the main results about the duality between the rigid cocenter and rigid quotient, in particular, Theorem \ref{t:main-rigid}. Some of these results are proven under the assumption that (PDT), the density theorem for parabolic subalgebras, holds. In section \ref{s:8}, we prove (by induction) the Basis Theorem, Density Theorem, and Trace Paley-Wiener Theorem, as enumerated in Theorem \ref{t:main-density}. In particular, we now see that the assumption (PDT) can be removed from the previous results. We also obtain sharp bounds for the dimension of the elliptic quotient for arbitrary parameters. Finally, in section \ref{s:unitary}, we give our application to the preservation of unitarity argument.

\section{Preliminaries}\label{sec:2}

\subsection{} Let $\Phi=(X,R,X^\vee,R^\vee,\Pi)$ be a reduced based
root datum. In particular,
\begin{enumerate}
\item[(a)] $X,X^\vee$ are free abelian groups of finite rank with a
  perfect pairing $\langle ~,~\rangle:X\times X^\vee\to \bZ$;
\item[(b)] $R\subset X$, $R^\vee\subset X^\vee$ are the roots and
  coroots, respectively, in bijection $\alpha\leftrightarrow
  \al^\vee$;
\item[(c)] $\Pi\subset R$ is the set of simple roots.
\end{enumerate}
Let $R^+$ denote the positive roots defined by $\Pi$, and
$\Pi^-=-\Pi^+$; let $R^{\vee,+}$, $R^{\vee,-}$ denote the
corresponding coroots. For every $\alpha\in R,$ let $s_\al\in GL(X)$
be the reflection $s_\al(x)=x-\langle x,\al^\vee\rangle\al.$ Let
$W\subset GL(X)$
be the finite Weyl group, generated by $S=\{s_\al:\al\in\Pi\}.$

\subsection{} The set of affine roots is $R^a=R^\vee\times\bZ.$ Let
$\le$ be the partial order of $R^\vee$ defined by $\beta^\vee\le
\al^\vee$ is $\al^\vee-\beta^\vee$ is a nonnegative integer linear
combination of $\{\al^\vee:\al\in\Pi\}.$ Set $R_m=\{\gamma\in R:
\gamma^\vee \text{ is minimal in } (R^\vee,\le)\}.$ The simple affine
roots are
\begin{equation}
\Pi^a=\{(\al,0): \al\in\Pi\}\cup \{(\gamma^\vee,1): \gamma\in R_m\}. 
\end{equation}
Define the extended affine Weyl group:
\begin{equation}
\wti W=X\rtimes W.
\end{equation}
We write a typical element in $\wti W$ as $t_x w$, $x\in X,$ $w\in
W$. The multiplication is then $(t_x w)\cdot (t_{x'} w')=t_{x+w(x')}
ww'.$ The group $\wti W$ acts on $X$ by $(t_xw)\cdot y=x+w(y).$

Define also the affine Weyl group:
\begin{equation}
W^a=Q\rtimes W,
\end{equation}
where $Q\subset X$ is the root lattice, i.e., the $\bZ$-span of
$R$. The group $W^a$ is an infinite Coxeter group generated by
$S^a=S\cup \{t_{-\gamma} s_\gamma: \gamma\in R_m\}.$ Define positive
and negative affine roots as follows:
\begin{equation}
\begin{aligned}
R^{a,+}&=(R^\vee\times \bZ_{>0})\cup (R^{\vee,+}\times\{0\}),\\
R^{a,-}&=(R^\vee\times \bZ_{<0})\cup (R^{\vee,-}\times\{0\}),\\
\end{aligned}
\end{equation} 
and the length function $\ell:\wti W\to \bZ_{\ge 0}$,
\begin{equation}
\ell(w)=\#\{\al_a\in R^{a,+}: w\al_a\in R^{a,-}\}, \quad w\in \wti W.
\end{equation}
Set $\Omega=\{w\in \wti W: \ell(w)=0\}.$ Then $\wti W=W^a\rtimes
\Omega$, and $\Omega\cong X/Q.$

\subsection{} Fix a set of indeterminates $\bbq=\{\bbq(s): s\in S^a\}$
such that $\bbq(ws)=\bbq(s)$ for all $w\in \wti W$, and let
$\Lambda=\bC[\bbq(s)^{\pm 1}: s\in S^a].$

\begin{definition}[Iwahori-Matsumoto presentation]\label{d:IM} The affine Hecke algebra
  $\CH=\CH(\Phi,\bbq)$ is the $\Lambda$-algebra generated by $\{T_w:
  w\in \wti W\}$ subject to the relations:
\begin{enumerate}
\item $T_w\cdot T_{w'}=T_{ww'}$, if $\ell(ww')=\ell(w)+\ell(w')$;
\item $(T_s+1)(T_s-\bbq(s)^2)=0,$ $s\in S^a$.
\end{enumerate}
\end{definition}

The algebra $\CH(\Phi,\bbq)$ admits a second presentation, due to
Bernstein and Lusztig, that we recall next. If $w\in W^a$ has a
reduced expression $w=s_1\cdots s_k,$ $s_i\in S^a$, set $\bbq(w)=\prod
\bbq(s_i).$ Extend this further to $\wti W$ by setting $\bbq(u)=1$ for
all $u\in\Omega.$

Define $X_+=\{x\in X: \langle x,\al^\vee\rangle\ge 0,\text{ for all
}\al\in\Pi\}.$ If $x\in X,$ write $x=x_1-x_2,$ $x_1,x_2\in
X_+$. Define
\begin{equation}
\theta_x=\bbq(t_{x_1})^{-1}\bbq(t_{x_2}) T_{t_{x_1}} T_{t_{x_2}}^{-1}.
\end{equation}
Then $\{T_w\theta_x: w\in W, x\in X\}$ forms a $\Lambda$-basis of
$\CH$, and we have the relations (\cite[3.3(b), Lemma 3.4,
Propositions 3.6,3.7]{L1}):
\begin{equation}
\theta_x\cdot \theta_{x'}=\theta_{x+x'},\text{ for all }x,x'\in X;\ 
  \theta_0=1;
\end{equation}
\begin{equation}\theta_x T_s-T_s\theta_{s(x)}=\begin{cases}
    (\bbq(s)^2-1)\frac{\theta_x-\theta_{s(x)}}{1-\theta_{-\al}},&\text{if
    }\al^\vee\notin 2X^\vee,\\
((\bbq(s)^2-1)+\theta_{-\al}(\bbq(s)\bbq(\wti s)-\bbq(s)\bbq(\wti
s)^{-1}))\frac{\theta_x-\theta_{s(x)}}{1-\theta_{-2\al}}, &\text{if
}\al^\vee\in 2X^\vee,
\end{cases}
\end{equation}
$s=s_\al\in S,\ x\in X$. Here $\wti s$ is defined as follows. Let $S(\al)\subset S^a$ be the
connected component of the Coxeter graph containing $s$. When
$\al^\vee\in 2X^\vee,$ $S(\al)$ must be of affine type $\wti C_l$,
$l\ge 1$. Let $\wti s$ be the image of $s$ under the nontrivial graph
automorphism of $\wti C_l.$

\smallskip

Denote $\CA=\Lambda[\theta_x: x\in X]$, an abelian subalgebra of $\C
H$. The center of $\C H$ is $\C Z=\CA^W$ (\cite[Corollary 3.8]{L1}). In
particular, $\CH$ is finite over its center and the simple
$\CH$-modules are finite dimensional. Let $R(\CH)$ be the Grothendieck
group of finite dimensional $\CH$-modules.

The central characters are
identified with elements of $W\backslash T\times\text{Spec}\Lambda,$
where $T=\Hom_\bZ[X,\bC^\times].$

\subsection{}\label{s:induction} Let $J\subset \Pi$ be given, and put $J^\vee=\{\al^\vee:
  \al\in J\}. $ Let $R_J=R\cap \bQ J$ and
$R_J^\vee=\{\al^\vee: \al\in R_J\}.$ Let $W_J\subset W$ be the
parabolic subgroup defined by the reflections in $J$. Denote $\wti
W_J=X\rtimes W_J,$ and let $\bbq_J$ be the restriction of $\bbq$ to
$\wti W_J.$ Consider the root datum $\Phi_J=(X,R_J,X^\vee,R_J^\vee,J)$
and the affine Hecke algebra $\CH_J=\CH(\Phi_J,\bbq_J).$ This algebra
can be identified with the subalgebra of $\C H(\Phi,\bbq)$ generated
by $T_w,$ $w\in W_J$ and $\theta_x,$ $x\in X.$ One calls it a
parabolic subalgebra of $\C H$.

Define the induction functor 
\begin{equation}\label{e:induction}
i_J:R(\C H_J)\to R(\CH).
\end{equation}

To define the ``semisimple part'' of $\CH$, one introduces
\begin{equation}
X_J=X/X\cap (J^\vee)^\perp\text{ and } X_J^\vee=X^\vee\cap \bQ J^\vee,
\end{equation}
where $(J^\vee)^\perp=\{x\in X: \langle x,\al^\vee\rangle=0,\text{ for
  all }\al\in J\}.$ Consider the root datum
$\Phi_J^{\sem}=(X_J,R_J,X_J^\vee,R_J^\vee, J)$ and the affine Hecke
algebra $\CH_J^{\sem}=\CH(\Phi_J^{\sem},\bbq_J).$ For every 
\begin{equation}
t\in T^J=\Hom_\bZ(X/X\cap \bQ J,\bC^\times),
\end{equation}
let $\chi_t:\CH_J\to \CH_J^{\sem}$ be the algebra homomorphism (cf. \cite{OS1})
\begin{equation}\label{chi-t}
\chi_t(\theta_x T_w)=t(x) \theta_{x_J} T_w,\ x\in X, w\in W,
\end{equation}
where $x_J$ is the image of $x$ in $X_J.$ For every $\sigma\in
R(\CH_J^{\sem})$ and every $t\in T^J,$ one can therefore construct the
parabolically induced module
\begin{equation}
X(J,\sigma,t)=i_J(\sigma\circ \chi_t).
\end{equation}

\section{Clifford Theory for $\CH \rtimes \langle\delta\rangle$}\label{sec:clifford}

In this section, we consider the affine Hecke algebras $\CH$ together with an automorphism $\d$ and the Clifford theory. Notice that even if one is just interested in the representation theory of $\CH$, automorphisms of its parabolic subalgebras appear in the study of induction and restriction functors. See also \S \ref{sec:3}.

\subsection{} Suppose $\delta$ is an automorphism of $\Phi$ of finite
order $d$. If the indeterminates $\bbq$ satisfy
$\bbq(\delta(w))=\bbq(w),$ for all $w\in \wti W$, we can define an
extension of the affine Hecke algebra $\CH$ by $\Gamma=\langle\delta\rangle$:
\begin{equation}
\CH'=\CH\rtimes\Gamma.
\end{equation}
Set $\wti W'=\wti W \rtimes \Gamma$ and $W'=W\rtimes \Gamma.$ The center of $\CH'$ is
$\CA^{W'}$, so the central characters are parameterized by points in
$W'\backslash T\times\text{Spec}\Lambda.$

If $J\subset \Pi,$ set $\Gamma_J=\{\delta^i\in\Gamma:
\delta^i(R_J)=R_J\}.$ The parabolic subalgebra is then
$\CH_J'=\CH_J\rtimes\Gamma_J.$ We denote the induction functor again
by $i_J:R(\CH_J')\to R(\CH').$

\subsection{}\label{s:d-comm} We define the $\delta$-commutators and cocenters. This
section is analogous with \cite[section 3.1]{CH}, where the similar
notions in the setting of the graded affine Hecke algebra were considered.

\begin{definition}
If $h,h'\in \CH$, define the $\delta^i$-commutator of $h$ and $h'$ by
$[h,h']_{\delta^i}=hh'-h'\delta(h).$ Let $[\CH,\CH]_{\delta^i}$ be the
submodule of $\CH$ generated by all $\delta^i$-commutators.
\end{definition}
Denote by $\bar\CH^{[i]}$ the quotient of $\CH/[\CH,\CH]_{\delta^i}$
by the image of $(1-\delta)$. The following result was proved in
\cite{CH} for extended graded Hecke algebras, but the proof applies to any
associative algebra extended by a finite cyclic group.

\begin{proposition}[{\cite[Proposition 3.1.1]{CH}}]\label{cocenter-decomp} Set
  $\bar\CH'=\CH'/[\CH',\CH'].$ Then:
\begin{enumerate}
\item $\bar\CH'=\oplus_{i=0}^{d-1}
  \CH\delta^i/([\CH',\CH']\cap\CH\delta^i)$;
\item The map $h\mapsto h\delta^i$ induces a linear isomorphism from
  $\bar\CH^{[i]}\to \CH\delta^i/([\CH',\CH']\cap\CH\delta^i)$.
\end{enumerate}
\end{proposition}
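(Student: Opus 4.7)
The plan is to exploit the $\Gamma$-grading on $\CH'$ coming from the semidirect product structure $\CH' = \bigoplus_{i=0}^{d-1} \CH\delta^i$. A direct computation shows that
\[
[x\delta^j, y\delta^k] \;=\; x\delta^j(y)\,\delta^{j+k} - y\delta^k(x)\,\delta^{j+k} \;\in\; \CH\delta^{j+k},
\]
so $[\CH',\CH']$ is a graded subspace: $[\CH',\CH'] = \bigoplus_k \bigl([\CH',\CH']\cap \CH\delta^k\bigr)$. Part (1) is then immediate by taking the quotient grade by grade.

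For part (2), fix $i$ and use right-multiplication by $\delta^i$ as a linear isomorphism $\CH \xrightarrow{\sim} \CH\delta^i$. The task is to identify the preimage of $[\CH',\CH']\cap\CH\delta^i$, which we expect to be exactly $[\CH,\CH]_{\delta^i} + (1-\delta)\CH$. The easy inclusion exhibits two families of elements of $[\CH',\CH']\cap\CH\delta^i$: first, $[x,\, y\delta^i] = (xy - y\delta^i(x))\delta^i = [x,y]_{\delta^i}\delta^i$, which gives $[\CH,\CH]_{\delta^i}\delta^i$; second, $[\delta,\, x\delta^{i-1}] = (\delta(x) - x)\delta^i$, which gives $(1-\delta)(\CH)\delta^i$. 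This yields a well-defined surjection $\bar\CH^{[i]} \twoheadrightarrow \CH\delta^i/([\CH',\CH']\cap\CH\delta^i)$.

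The nontrivial direction, and the main obstacle, is showing that an arbitrary homogeneous commutator $[x\delta^j, y\delta^{i-j}]$ pulls back into $[\CH,\CH]_{\delta^i} + (1-\delta)\CH$. Here I would argue directly in the double quotient $\bar\CH^{[i]}$: the relation $a \equiv \delta(a)$ iterated gives $a \equiv \delta^{-j}(a)$, so
\[
x\,\delta^j(y) \;\equiv\; \delta^{-j}\bigl(x\,\delta^j(y)\bigr) \;=\; \delta^{-j}(x)\, y \pmod{(1-\delta)\CH},
\]
and then the $\delta^i$-commutator relation $ab \equiv b\delta^i(a) \pmod{[\CH,\CH]_{\delta^i}}$ applied to $a = \delta^{-j}(x)$, $b = y$ gives
\[
\delta^{-j}(x)\,y \;\equiv\; y\,\delta^i\bigl(\delta^{-j}(x)\bigr) \;=\; y\,\delta^{i-j}(x).
\]
Combining, $x\delta^j(y) - y\delta^{i-j}(x) \in [\CH,\CH]_{\delta^i} + (1-\delta)\CH$, which is exactly what is needed.

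The only subtlety to watch is that the two relations used in the last step interact correctly, i.e.\ that $(1-\delta)\CH$ is stable modulo $[\CH,\CH]_{\delta^i}$ and vice versa; this is automatic because $\delta$ is an algebra automorphism preserving $[\CH,\CH]_{\delta^i}$ (one checks $\delta([h,h']_{\delta^i}) = [\delta(h),\delta(h')]_{\delta^i}$), so forming the double quotient $\bar\CH^{[i]}$ is unambiguous and the computation above takes place in a well-defined vector space. With these verifications in hand, the map of part (2) is a bijection, and the proposition follows. The argument is purely formal in the associative algebra $\CH'$ and $\Gamma = \langle\delta\rangle$, so it applies verbatim to any associative algebra extended by a finite cyclic group, as asserted.
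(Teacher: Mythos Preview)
Your proof is correct. The paper does not actually supply a proof here: it cites \cite[Proposition 3.1.1]{CH} and remarks that the argument ``applies to any associative algebra extended by a finite cyclic group.'' Your write-up is exactly such a formal proof, using only the $\Gamma$-grading on $\CH'$ and the two relations defining $\bar\CH^{[i]}$; in particular your reduction of an arbitrary homogeneous commutator $x\delta^j(y)-y\delta^{i-j}(x)$ via the chain $x\delta^j(y)\equiv\delta^{-j}(x)\,y\equiv y\,\delta^{i-j}(x)$ in $\bar\CH^{[i]}$ is clean and complete. (Note there is a typo in the paper's Definition of the $\delta^i$-commutator, which should read $hh'-h'\delta^i(h)$; you have used the intended definition.)
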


\subsection{}\label{sec:2.7} We discuss Clifford theory for $\CH'$. This is standard and analogous with the graded affine Hecke algebra case from \cite[section 3.2]{CH}, and the proofs are identical.

Let $\Gamma=\<\d\>$. If $(\pi,X)$ is a finite dimensional $\CH$-module, let $(^{\delta^i} \pi,{}^{\delta^i}\!X)$
denote the $\CH$-module with the action
$^{\delta^i}\!\pi(h)x=\pi(\delta^{-i}(h))x,$ for all $x\in X,$ $h\in
\CH.$ If $X$ is a simple module, define the inertia group
$\Gamma_X=\{\delta^i: X\cong{}^{\delta^i}\!X\}.$ 

Fix a family of isomorphisms $\phi_{\delta^i}: X\to {}^{\delta^{-i}}\!
X$, $\delta^i\in \Gamma_X$  such that $\phi_{\delta^{k
    i}}=\phi_{\delta^{i}}^k.$ This is possible since $\Gamma_X$ is cyclic.

If $U$ is an irreducible $\Gamma_X$-module, there is an action of
$\CH\rtimes \Gamma_X$ on $X\otimes U$: 
\begin{equation}
(h\delta^i)(x\otimes u)=h \phi_{\delta^i}(x)\otimes \delta^i u.
\end{equation}
One can form the induced $\CH'$-module $X\rtimes U=\Ind_{\CH\rtimes
  \Gamma_X}^{\CH\rtimes\Gamma} (X\otimes U).$ 

\begin{theorem}[{cf. \cite[Appendix A]{RR}}]\label{t:irrH'} \begin{enumerate}
\item If $X$ is an irreducible $\CH$-module and $U$ an irreducible
  $\Gamma_X$-module, the induced $\CH'$-module $X\rtimes U$ is
  irreducible.
\item Every irreducible $\CH'$-module is isomorphic to an $X\rtimes U.$
\item If $X\rtimes U\cong X'\rtimes U'$, then $X,X'$ are
  $\Gamma$-conjugate $\CH$-modules, and $U\cong U'$ as $\Gamma_X$-modules.
\end{enumerate}
\end{theorem}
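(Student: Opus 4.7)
The proof follows the standard Clifford theory pattern for an algebra extension by a finite cyclic group, the cyclicity of $\Gamma_X$ ensuring that the usual projective 2-cocycle obstruction vanishes. My first step is to verify that the formula $(h\delta^i)(x\otimes u) = h\,\phi_{\delta^i}(x)\otimes\delta^i u$ really defines an action of $\CH\rtimes\Gamma_X$ on $X\otimes U$. This reduces to two identities: the intertwining relation $\phi_{\delta^i}(h\cdot x)=\delta^i(h)\cdot\phi_{\delta^i}(x)$, which is exactly the statement that $\phi_{\delta^i}:X\to{}^{\delta^{-i}}X$ is an $\CH$-module map, and the multiplicativity $\phi_{\delta^{i+j}}=\phi_{\delta^i}\circ\phi_{\delta^j}$, which follows from the normalization $\phi_{\delta^{ki}}=\phi_{\delta^i}^k$ made possible by the cyclicity of $\Gamma_X$.

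For part (2), let $Y$ be an irreducible $\CH'$-module and restrict to $\CH$. Decompose $Y|_\CH=\bigoplus_{[X']}Y_{[X']}$ into $\CH$-isotypic components. The action of $\Gamma$ on $Y$ carries $Y_{[X']}$ to $Y_{[{}^{\delta}X']}$; irreducibility of $Y$ as an $\CH'$-module forces a single $\Gamma$-orbit of classes to appear. Picking $[X]$ in this orbit and setting $Y_X=Y_{[X]}$, one sees that $Y_X$ is stable under $\CH\rtimes\Gamma_X$ and that $Y\cong\Ind_{\CH\rtimes\Gamma_X}^{\CH'}Y_X$ (by counting dimensions and invoking Frobenius reciprocity). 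Define $U:=\Hom_\CH(X,Y_X)$ and use the normalized intertwiners $\phi_{\delta^i}$ to promote $U$ to a genuine $\Gamma_X$-module; then the canonical map $X\otimes U\to Y_X$ is an isomorphism of $\CH\rtimes\Gamma_X$-modules, so $Y\cong X\rtimes U$.

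For part (1), and for the irreducibility of $U$ in the construction above, I would run the same analysis in reverse. Any $\CH'$-submodule $Z\subseteq X\rtimes U$ decomposes under $\CH$ into the $\Gamma/\Gamma_X$-translates of $\CH$-isotypic components; by the Mackey decomposition, the $X$-isotypic part $Z_X$ is an $\CH\rtimes\Gamma_X$-submodule of $X\otimes U$. Since $X$ is $\CH$-irreducible, $Z_X$ has the form $X\otimes U'$ for a $\Gamma_X$-submodule $U'\subseteq U$. Irreducibility of $U$ then forces $U'=0$ or $U$, hence $Z=0$ or $Z=X\rtimes U$; conversely, irreducibility of $X\rtimes U$ forces $U$ to be irreducible. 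Part (3) is immediate from the same analysis: restricting $X\rtimes U\cong X'\rtimes U'$ to $\CH$ identifies the $\Gamma$-orbits through $X$ and $X'$, and the $X$-isotypic part recovers $U\cong U'$ as $\Gamma_X$-modules.

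The main technical point is the vanishing of the Schur-multiplier cocycle; in the general Clifford-theoretic setup one is forced to pass to projective representations of $\Gamma_X$, but here the cyclicity of $\Gamma_X$ together with the stipulated normalization $\phi_{\delta^{ki}}=\phi_{\delta^i}^k$ disposes of this obstruction from the outset. Everything else is a routine application of Mackey's decomposition for induction from $\CH\rtimes\Gamma_X$ to $\CH\rtimes\Gamma$, exactly parallel to the argument in \cite[Appendix A]{RR} and the graded-affine analogue in \cite[\S3.2]{CH}.
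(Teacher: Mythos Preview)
Your argument is correct and is exactly the standard Clifford-theory proof one would expect; the paper itself does not supply a proof but merely records the statement with a reference to \cite[Appendix A]{RR} (and points to the parallel graded case in \cite[\S3.2]{CH}), so there is nothing further to compare.
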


 For every
$\delta'\in\Gamma$, set $\Irr^{\delta'}\CH=\{X\in\Irr\CH: \delta'\in\Gamma_X\}$ and let $R^{\delta'}(\CH)$ denote the $\bZ$-linear span of $\Irr^{\delta'}\CH.$ If $(\pi,X)\in\Irr^{\delta'}\CH$, let
$\phi_{\delta'}\in \End_\bC(X)$ be the intertwiner
as before. The twisted trace is 
$$\tr^{\delta'}(\pi):\CH\to\bC,\
\tr^{\delta'}(\pi)(h)=\tr(\pi(h)\circ\phi_{\delta'}).$$ 
Let also $\tr(~,~): \CH'\times R(\CH')\to \bC$ be the trace pairing,
i.e., $\tr(h,\pi)=\tr\pi(h),$ $h\in\CH'$, $\pi\in R(\CH').$

\begin{lemma}[{cf. \cite[Lemma 3.2.1]{CH}}]\label{l:traceH'} Let $X\rtimes U$ be an irreducible
  $\CH'$-module as in Theorem \ref{t:irrH'}. For $h\in \CH,$ $\delta'\in\Gamma,$
\begin{equation}
\tr(h\delta',X\rtimes U)=\begin{cases}
  \delta'(U)\sum_{\gamma\in\Gamma/\Gamma_X}\tr^{\delta'}(X)(\gamma^{-1}(h)),
  &\text{ if }\delta'\in\Gamma_X,\\
0, &\text{ if }\delta'\notin\Gamma_X,
\end{cases}
\end{equation}
where $\delta'(U)$ is the root of unity by which $\delta'$ acts in $U$.
\end{lemma}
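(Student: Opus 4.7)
The plan is a standard Mackey-style trace calculation based on the explicit realization
\[
X \rtimes U \;=\; \bigoplus_{\gamma \in \Gamma/\Gamma_X} \gamma \otimes (X \otimes U),
\]
which comes from the decomposition $\CH' = \bigoplus_\gamma \gamma\cdot(\CH\rtimes\Gamma_X)$ of $\CH'$ as a free right module over $\CH\rtimes\Gamma_X$, indexed by coset representatives of $\Gamma/\Gamma_X$. The trace of $h\delta'$ will be computed summand by summand.

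The first step is to determine how $h\delta'$ permutes these summands. Rewriting $h\delta'\gamma$ inside $\CH'$ via the semidirect-product relation $h\eta=\eta\cdot\eta^{-1}(h)$, together with the commutativity of $\Gamma=\langle\delta\rangle$, shows that the action of $h\delta'$ sends the summand indexed by $\gamma$ to the summand indexed by the coset $\gamma\delta'\Gamma_X$. Hence if $\delta'\notin\Gamma_X$, every summand is moved off-diagonal and the trace vanishes; this is the second case of the lemma.

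When $\delta'\in\Gamma_X$, each summand $\gamma\otimes(X\otimes U)$ is preserved, and under the obvious identification with $X\otimes U$ the induced operator coincides with the action of $\gamma^{-1}(h)\delta'\in\CH\rtimes\Gamma_X$. By the defining formula $(h'\delta^i)(x\otimes u)=h'\phi_{\delta^i}(x)\otimes\delta^i u$, this operator factors as $(\pi(\gamma^{-1}(h))\circ\phi_{\delta'})\otimes(\delta'|_U)$. Since $\Gamma_X$ is cyclic and $U$ is irreducible, $U$ is one-dimensional, so $\delta'$ acts on $U$ by the scalar $\delta'(U)$. The trace on this summand is therefore $\delta'(U)\cdot\tr^{\delta'}(X)(\gamma^{-1}(h))$, and summing over $\gamma\in\Gamma/\Gamma_X$ produces the first case.

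The only real subtlety, and the one place where care is needed, is to confirm that the contribution $\tr^{\delta'}(X)(\gamma^{-1}(h))$ is well-defined on cosets, i.e.\ independent of the choice of representative $\gamma$. This follows from the compatibility $\phi_{\delta^{ki}}=\phi_{\delta^i}^k$ in the cyclic group $\Gamma_X$, which makes each $\phi_\eta$ ($\eta\in\Gamma_X$) commute with $\phi_{\delta'}$; combining this with the intertwining identity $\phi_\eta\circ\pi(h')\circ\phi_\eta^{-1}=\pi(\eta(h'))$ and the cyclicity of the trace shows that replacing $\gamma$ by $\gamma\eta$ leaves $\tr^{\delta'}(X)(\gamma^{-1}(h))$ unchanged. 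Apart from this bookkeeping point, the argument is routine and is formally identical to its graded-Hecke-algebra counterpart \cite[Lemma 3.2.1]{CH}.
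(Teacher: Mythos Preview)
Your argument is correct and is precisely the standard Mackey-style computation one expects here. The paper does not actually supply a proof of this lemma; it simply cites \cite[Lemma 3.2.1]{CH}, where the identical argument is carried out in the graded setting, so your write-up is in effect a faithful reconstruction of that omitted proof.
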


{Set $R(\CH')_{\mathbb C}=R(\CH') \otimes_{\mathbb Z} \mathbb C$. 
{Let $\co$ be a $\Gamma$-orbit on $\Irr \CH$. Set $\Gamma_\co=\Gamma_X$ for any $X \in \co$. This is well-defined since $\Gamma$ is cyclic. Then for any irreducible $\Gamma_\co$-module $U$ and $X \in \co$, $X \rtimes U=\oplus_{Y \in \co} Y \otimes U$ is independent of the choice of $X$. We denote it by $\co \rtimes U$. By Theorem \ref{t:irrH'}, $\Irr \CH'=\{\co \rtimes U\}$, where $\co$ runs over $\Gamma$-orbits on $\Irr\CH$ and $U$ runs over isomorphism classes of irreducible representations of $\Gamma_\co$. 

Suppose that $\d^i \in \Gamma_\co$. Let $U_{\co, i}$ be the virtual representation of $\Gamma_\co$ whose character is the characteristic function on $\d^i$. Then $\{\co \rtimes U_{\co, i}\}$ is a basis of $R(\CH')_\bC$. 

Let $R^{[i]}(\CH')_\bC$ be the subspace of $R(\CH')_\bC$ spanned by $\co \rtimes U_{\co, i}$, where $\co$ runs over $\Gamma$-orbits on $\Irr\CH$ with $\d^i \in \Gamma_\co$. Then 
\begin{equation}\label{groth-decomp}
R(\CH')_\bC=\bigoplus_{i=0}^{d-1} R^{[i]}(\CH')_\bC.
\end{equation}
By definition, $R^{[i]}(\CH')_\bC$ is a vector space with basis $(\Irr^{\d^i} \CH)_\Gamma$. The map $X \in \Irr^{\d^i}(\CH) \mapsto X \rtimes U_{\Gamma_X, i}$ induces an isomorphism $R^{\delta^i}(\CH)_{\Gamma, \mathbb C} \to R^{[i]}(\CH')_\bC$.  Here $R^{\delta^i}(\CH)_{\Gamma, \mathbb C}$ is the $\Gamma$-coinvariants of $R^{\delta^i}(\CH)_{\mathbb C}$.

By Lemma \ref{l:traceH'}, for $0 \le i, j<d$ with $i \neq j$, $\tr(\CH \d^i, \co \rtimes U_{\co, j})=0$. }

\subsection{} Define the trace linear map
\begin{equation}
\tr: \bar\CH'\to R(\CH')^*,\quad h\mapsto (f_h: R(\CH')\to \bC,\
f_h(\pi)=\tr\pi(h)).
\end{equation}
This map is compatible with the decompositions from Proposition
\ref{cocenter-decomp} and (\ref{groth-decomp}) as follows.
 Let
$R^*_{\delta}(\CH)=\Hom_\bC(R^{\delta}(\CH)_\bC,\bC)$.  
The twisted trace map 
\begin{equation}
\tr_{\delta}: \CH\to R^*_{\delta}(\CH), \quad h\mapsto (f^{\delta}_h:
R^\delta(\CH)_\bC\to\bC, \ f^\delta_h(\pi)=\tr^\delta(\pi))
\end{equation}
descends to a linear map
\begin{equation}
\tr^\delta:\bar\CH_\delta=\CH/[\CH,\CH]_\delta\to R^*_{\delta}(\CH).
\end{equation}

\begin{definition}\label{d:goodforms} A form $f\in R^*_\delta(\CH)$ is called a trace form if $f\in \im \tr^\delta$.
Denote the
subspace of trace forms by $R^*_\delta(\CH)_{\tr}.$

A form $f\in R^*_\delta(\CH)$ is called good if for every $J\subset I$
such that $\delta(J)=J$, and every $\sigma\in\Irr^\delta(\CH_J^\sem)$,
the function $t\mapsto f(X(J,\sigma,t))$ is a regular function on $(T^J)^\delta$. Denote the subspace of good
forms by $R^*_\delta(\CH)_\good.$
\end{definition}
It is clear that $R^*_\delta(\CH)_{\tr}\subset R^*_\delta(\CH)_\good.$

\section{Induction and restriction}\label{sec:3}

In this section, we recollect some known facts about the induction and restriction functors. 

\subsection{} If $K\subset
J(\subset \Pi)$ are given, denote by $i_K^J: R(\CH_K)\to R(\CH_J)$ the
functor of induction, and by $r_K^J: R(\CH_J)\to R(\CH_K)$ the functor
of restriction. When $J$ and $K$ are $\delta$-invariant, we also have the corresponding functors, denoted again
by $i_K^J$ and $r_K^J$ between $R^\delta(\CH_K)$ and $R^\delta(\CH_J).$

It is obvious that 

\begin{lemma}\label{l:ind-restr-1}
For $L\subset K\subset J\subset I,$ $i_L^J=i_K^J\circ i_L^K$ and $r_L^J=r_L^K\circ r_K^J.$
\end{lemma}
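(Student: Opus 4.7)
The plan is to reduce both statements to the standard transitivity of induction and restriction for a tower of subalgebras, applied to the chain $\CH_L \subset \CH_K \subset \CH_J$ (and, in the $\d$-equivariant case, the extended chain $\CH_L' \subset \CH_K' \subset \CH_J'$ where $\CH_M' = \CH_M \rtimes \Gamma_M$).

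First I would verify that $\CH_L \subset \CH_K \subset \CH_J$ really is a chain of (unital) subalgebras, using the Bernstein--Lusztig presentation described in Section \ref{sec:2}: each $\CH_M$ ($M \in \{L,K,J\}$) is the $\Lambda$-subalgebra of $\CH$ generated by $\{T_w : w \in W_M\} \cup \{\theta_x : x \in X\}$, and since $W_L \subset W_K \subset W_J$, the inclusions are immediate. In the twisted setting, $\Gamma_L \subset \Gamma_K \subset \Gamma_J$ (since $\delta$-invariance of $R_L$ forces $\delta$-invariance of the ambient chain of root subsystems only when the groups are nested appropriately; more precisely, $\Gamma_M = \{\delta^i : \delta^i(R_M) = R_M\}$ clearly satisfies these inclusions since $R_L \subset R_K \subset R_J$ are stabilized in a compatible way), so $\CH_L' \subset \CH_K' \subset \CH_J'$ is again a chain of subalgebras.

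Next, for induction, recall that $i_K^J : R(\CH_K') \to R(\CH_J')$ is realized on modules as $V \mapsto \CH_J' \otimes_{\CH_K'} V$. Associativity of the tensor product then gives, for any $\CH_L'$-module $V$, a natural isomorphism
\[
\CH_J' \otimes_{\CH_K'} \bigl(\CH_K' \otimes_{\CH_L'} V\bigr) \;\cong\; \CH_J' \otimes_{\CH_L'} V,
\]
which descends to the identity $i_K^J \circ i_L^K = i_L^J$ at the level of Grothendieck groups. For restriction, $r_K^J$ is simply the forgetful functor along the inclusion $\CH_K' \hookrightarrow \CH_J'$, and forgetting in stages agrees with forgetting directly, giving $r_L^K \circ r_K^J = r_L^J$ on the nose at the level of modules, hence on $R$.

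There is essentially no obstacle here; the only small point that needs a moment's attention is checking that $\CH_J'$ is free (or at least flat) as a right $\CH_K'$-module so that the tensor-product realization of $i_K^J$ is well-defined at the Grothendieck group level and associativity of tensor products applies without hypothesis. This follows from the Bernstein--Lusztig basis: a set of coset representatives of $W_K \backslash W_J$ (combined with $\Gamma_K \backslash \Gamma_J$ in the extended case) gives an explicit right $\CH_K'$-basis of $\CH_J'$, and analogously for $K \subset L$, which also shows that $i_K^J$ and $r_K^J$ send finite-dimensional modules to finite-dimensional modules and thus induce the stated maps on Grothendieck groups.
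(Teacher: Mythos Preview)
Your argument is correct and is exactly the standard one; the paper itself gives no proof, prefacing the lemma with ``It is obvious that'' and leaving the transitivity of induction and restriction as evident. One minor remark: your justification for the inclusions $\Gamma_L \subset \Gamma_K \subset \Gamma_J$ is not right in general (an automorphism stabilizing $R_L$ need not stabilize a larger $R_K$), but in the $\delta$-twisted setting of the paper the functors $i_K^J$, $r_K^J$ on $R^\delta$ are only defined when $K$ and $J$ are each $\delta$-invariant, so for the lemma one has $\Gamma_L = \Gamma_K = \Gamma_J = \Gamma$ and the issue disappears; moreover the paper's twisted functors go between $R^\delta(\CH_K)$ and $R^\delta(\CH_J)$ (spaces of $\delta$-stable modules over the untwisted algebras) rather than between $R(\CH_K')$ and $R(\CH_J')$, so the detour through the extended algebras is unnecessary here.
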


The following result. is the analogue of \cite[Lemma 5.4]{BDK} and
\cite[Lemma 2.1]{Fl}. See also \cite[Theorem 1]{Mi}.

\begin{lemma}\label{l:ind-restr-2}
If $K=\delta(K)$ and $J=\delta(J)$, then
\begin{equation}
r_K\circ i_J=\sum_{w\in {}^K W^J \cap W^{\delta}} i_{K_w}^K\circ w\circ r_{J_w}^J,
\end{equation}
as functors from $R^\delta(\CH_J)$ to $R^\delta(\CH_K)$, where ${}^K W^J$ is the set of representatives of minimal length for $W_K \backslash W/W_J$, $K_w=K\cap wJw^{-1}$ and $J_w=J\cap w^{-1}K w.$
\end{lemma}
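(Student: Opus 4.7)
The plan is to derive this as a $\delta$-equivariant refinement of the Mackey decomposition for affine Hecke algebras due to Morris \cite{Mi} (compare also \cite[Lemma 5.4]{BDK}). The first step is to establish the untwisted identity
\[
r_K \circ i_J = \sum_{w \in {}^K W^J} i_{K_w}^K \circ w \circ r_{J_w}^J
\]
as functors on $R(\CH_J)$. This is done by constructing a filtration of $\CH$ as a $(\CH_K, \CH_J)$-bimodule indexed by a linear refinement of the Bruhat order on $W_K \backslash W / W_J$, whose associated graded piece for the double coset with minimal-length representative $w \in {}^K W^J$ is isomorphic to the rank-one bimodule $\CH_K \otimes_{\CH_{K_w}} {}^w \CH_{J_w} \otimes_{\CH_{J_w}} \CH_J$. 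Here conjugation by $w$ gives the identification $\CH_{J_w} \cong \CH_{K_w}$ since $w J_w w^{-1} = K_w$ and the parameters match. Tensoring over $\CH_J$ with a module $V$ yields a filtration of $r_K(i_J V)$ whose graded pieces realize $i_{K_w}^K \circ w \circ r_{J_w}^J(V)$, and taking Euler characteristics in $R(\CH_K)$ gives the formula.

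Next I would incorporate $\delta$. Since both $K$ and $J$ are $\delta$-stable, $\delta$ respects the inclusions $\CH_K, \CH_J \hookrightarrow \CH$ and acts as a $\delta$-semilinear automorphism of the $(\CH_K, \CH_J)$-bimodule $\CH$. Because minimal-length double coset representatives are unique, the induced action of $\delta$ on $W_K \backslash W / W_J$ transports to the map $w \mapsto \delta(w)$ on ${}^K W^J$, whose fixed set is precisely ${}^K W^J \cap W^\delta$. The filtration from Step~1 may therefore be chosen $\delta$-stable, with $\delta$ permuting the associated graded pieces according to its action on the index set.

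Finally, pass to the $\delta$-twisted Grothendieck group. For $V \in R^\delta(\CH_J)$ with a chosen intertwiner $\phi_\delta \colon V \to {}^\delta V$, each $\delta$-orbit of size $k > 1$ on ${}^K W^J$ produces a $\delta$-stable subquotient of $r_K(i_J V)$ of the form $\bigoplus_{i=0}^{k-1} {}^{\delta^i}\!M_w$, with $\delta$ acting by cyclic permutation of the summands; by Lemma \ref{l:traceH'}, or a direct character computation, such a module has vanishing $\delta$-twisted trace on all of $\CH_K$, hence contributes zero in $R^\delta(\CH_K)$. Only the orbits of size one, that is, $w \in {}^K W^J \cap W^\delta$, survive, and for each such $w$ the corresponding graded piece is intrinsically $\delta$-equivariant and realizes $i_{K_w}^K \circ w \circ r_{J_w}^J(V)$.

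The main obstacle is Step~1, the explicit bimodule identification of the graded pieces, which requires the Bernstein-Lusztig presentation and careful control of how the $\theta_x$ commute past double-coset representatives; this is the content of \cite{Mi}. The remaining $\delta$-equivariant bookkeeping in Steps~2 and 3 is more routine but relies essentially on the uniqueness of minimal-length double coset representatives and on the vanishing of twisted characters of induced modules whose inducing subalgebras are genuinely permuted by $\delta$.
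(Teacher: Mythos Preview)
Your sketch is correct and is essentially what the paper has in mind. The paper itself gives no proof: it simply records the statement as ``the analogue of \cite[Lemma 5.4]{BDK} and \cite[Lemma 2.1]{Fl}'' and points to \cite[Theorem 1]{Mi} for the Hecke-algebra version of the Mackey filtration. Your Step~1 is exactly the content of those references, and your Steps~2--3 spell out the routine $\delta$-equivariant refinement that the paper leaves implicit.

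One small technical point worth tightening in Step~2: a literal linear refinement of the Bruhat order cannot be $\delta$-invariant when $\delta$ moves double cosets. What you actually use is the coarser filtration by length (or by $\delta$-orbits of double cosets), which is manifestly $\delta$-stable since $\delta$ preserves lengths; its graded pieces then split as direct sums of the $M_w$'s within a single length (or orbit), permuted by $\delta$ exactly as you describe. With that adjustment, the vanishing argument in Step~3 goes through: on a piece of the form $\bigoplus_{i=0}^{k-1} M_{\delta^i(w)}$ with $k>1$ and $\delta$ acting by cyclic shift, the operator $\pi(h)\circ\phi_\delta$ has zero diagonal blocks, hence zero trace for every $h\in\CH_K$, so only the terms with $w\in{}^KW^J\cap W^\delta$ survive in $R^\delta(\CH_K)$.
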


\subsection{}
We recall certain elements defined in \cite[section 5.1]{L1}. Let $\C F$ be the quotient field of $\C Z$. For every $\al\in \Pi$, set
\begin{equation}
\C G(\al)=\begin{cases}
    \frac{\bbq(s_\al)^2\theta_\al-1}{\theta_{\al}-1},&\text{if
    }\al^\vee\notin 2X^\vee,\\
\frac{(\bbq(s)\bbq(\wti s)\theta_\al-1)(\bbq(s)\bbq(\wti
s)^{-1})\theta_\al+1)}{\theta_{2\al}-1}, &\text{if
}\al^\vee\in 2X^\vee,
\end{cases}
\end{equation}
see \cite[section 3.8]{L1}. We have $\C G(\al)\in \C A_\C F=\C A\otimes_{\C Z}\C F.$ Define the following elements in $\C H_\C F=\C H\otimes_{\C Z}\C F$:
\begin{equation}
\tau^\al=(T_{s_\al}+1)\C G(\al)^{-1}-1.
\end{equation}
By \cite[Proposition 5.2]{L1}, the assignment $s_\al\mapsto \tau^\al$, $\al\in \Pi$ extends to a unique group homomorphism $W\to \CH_{\C F}^\times.$ Denote by $\tau_w$ the image of $w\in W.$ Moreover,
\begin{equation}\label{commute-tau-A}
f\tau_w=\tau_w w^{-1}(f), \quad \text{ for all } w\in W,\ f\in \C A_\C F.
\end{equation}
The following lemma is also well-known, see for example \cite[section 1.6]{BM3}, where a similar statement was verified in the context of graded Hecke algebras.

\begin{lemma}\label{l:commute-tau-J}
Suppose $K,J\subset \Pi$ are such that $K=w(J),$ where $w\in {}^KW^J$. For every $\al\in J,$ $T_{s_\al}\tau_{w^{-1}}=\tau_{w^{-1}}T_{s_\beta}$, where $\beta=w(\al)\in K.$
\end{lemma}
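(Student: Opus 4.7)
The plan is to reduce the identity to a commutation of intertwiners $\tau_w$ with $\CA_\C F$-coefficients. From the definition $\tau^\al=(T_{s_\al}+1)\C G(\al)^{-1}-1$, I first rewrite
\[ T_{s_\al}=\tau_{s_\al}\,\C G(\al)+\bigl(\C G(\al)-1\bigr),\qquad T_{s_\beta}=\tau_{s_\beta}\,\C G(\beta)+\bigl(\C G(\beta)-1\bigr), \]
reducing the problem to comparing products of $\tau$-intertwiners with rational $\C G$-factors.

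The crucial ingredient is the identity $w(\C G(\al))=\C G(\beta)$ in $\CA_\C F$. The element $w\in W$ acts on $\CA$ by $w(\theta_x)=\theta_{w(x)}$, so it sends $\theta_\al$ to $\theta_{w(\al)}=\theta_\beta$. The parameter coefficients $\bbq(s_\al)$ (and in the case $\al^\vee\in 2X^\vee$ also $\bbq(\wti s_\al)$) coincide with $\bbq(s_\beta)$ (resp.\ $\bbq(\wti s_\beta)$), because $s_\beta=ws_\al w^{-1}$ and $\bbq$ is constant on $\wti W$-conjugacy classes in $S^a$; in the degenerate case, $w$ also transports the $\wti C_l$-component $S(\al)$ to $S(\beta)$ compatibly with the diagram automorphism defining $\wti s$.

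Using \eqref{commute-tau-A} in the form $f\tau_{w^{-1}}=\tau_{w^{-1}}w(f)$ for $f\in\CA_\C F$, I then compute
\begin{align*}
T_{s_\al}\tau_{w^{-1}}&=\tau_{s_\al}\,\C G(\al)\tau_{w^{-1}}+\bigl(\C G(\al)-1\bigr)\tau_{w^{-1}}\\
&=\tau_{s_\al}\tau_{w^{-1}}\,\C G(\beta)+\tau_{w^{-1}}\bigl(\C G(\beta)-1\bigr).
\end{align*}
Since $w\mapsto\tau_w$ is a group homomorphism and $s_\beta=ws_\al w^{-1}$ gives $s_\al w^{-1}=w^{-1}s_\beta$, one has $\tau_{s_\al}\tau_{w^{-1}}=\tau_{s_\al w^{-1}}=\tau_{w^{-1}s_\beta}=\tau_{w^{-1}}\tau_{s_\beta}$. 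Substituting yields
\[ T_{s_\al}\tau_{w^{-1}}=\tau_{w^{-1}}\bigl(\tau_{s_\beta}\C G(\beta)+\C G(\beta)-1\bigr)=\tau_{w^{-1}}T_{s_\beta}, \]
as desired.

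Apart from the $W$-equivariance of the rational function $\C G$, every step is a formal manipulation using \eqref{commute-tau-A} and the group homomorphism property of $w\mapsto\tau_w$. The only point requiring care is the parameter transport in the degenerate case $\al^\vee\in 2X^\vee$, but this is dictated by the standing compatibility assumption on $\bbq$, so I do not anticipate any genuine obstacle.
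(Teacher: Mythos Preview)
Your proof is correct and follows essentially the same approach as the paper: both rewrite $T_{s_\al}=(\tau^\al+1)\C G(\al)-1$, push $\C G(\al)$ past $\tau_{w^{-1}}$ via \eqref{commute-tau-A} to obtain $\C G(\beta)$, and then use the group homomorphism property $\tau_{s_\al}\tau_{w^{-1}}=\tau_{w^{-1}}\tau_{s_\beta}$. Your version is slightly more explicit about why $w(\C G(\al))=\C G(\beta)$ (in particular the parameter transport in the $\al^\vee\in 2X^\vee$ case), which the paper leaves implicit.
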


\begin{proof}
We calculate:
\begin{align*}
T_{s_\al}\tau_{w^{-1}}&=((\tau^\al+1)\C G(\al)-1)\tau_{w^{-1}}\\
&=(\tau^\al+1)\tau_{w^{-1}} \C G(\beta)-\tau_{w^{-1}},\quad \text{by (\ref{commute-tau-A})}\\
&=(\tau_{w^{-1}}\tau^\beta+\tau_{w^{-1}})\C G(\beta)-\tau_{w^{-1}},\quad \text{since $\tau$ is a homomorphism}\\
&=\tau_{w^{-1}}T_{s_\al}.
\end{align*}
\end{proof}

\begin{lemma}\label{l:w-twist}
 If $\delta(K)=K$, $\delta(J)=J$, $w\in {}^KW^J(\delta)$, and $K=w(J),$ then $i_K\circ w=i_J$ in $R^\delta(\CH).$
\end{lemma}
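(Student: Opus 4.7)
The plan is to construct an explicit isomorphism $\Phi : i_K(w\sigma) \to i_J(\sigma)$ using Lusztig's intertwining element $\tau_w \in \CH_{\C F}^\times$. First I will make the twist functor concrete: the rule $T_{s_\alpha}\mapsto T_{s_{w(\alpha)}}$ for $\alpha\in J$ and $\theta_x\mapsto\theta_{w(x)}$ for $x\in X$ extends to an algebra isomorphism $\phi_w:\CH_J\to\CH_K$ (the defining relations are preserved because $w\in W$ permutes the root system and fixes $\bbq$), and I set $w\sigma:=\sigma\circ\phi_w^{-1}$ as a $\CH_K$-module structure on the underlying vector space $V$. The hypothesis $w\in W^\delta$ makes $\phi_w$ intertwine the $\delta$-actions on $\CH_J$ and $\CH_K$, so the functor descends to $R^\delta$. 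Combining Lemma \ref{l:commute-tau-J} with the commutation rule (\ref{commute-tau-A}) and the identity $\tau_w\tau_{w^{-1}}=1$, one reads off that $\phi_w$ is implemented by conjugation by $\tau_w$ inside $\CH_{\C F}$.

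With this in hand, the natural candidate map is
\[
\Phi : \CH\otimes_{\CH_K}V \longrightarrow \CH\otimes_{\CH_J}V, \qquad h\otimes v \longmapsto h\tau_w\otimes v.
\]
Well-definedness over the balanced tensor product reduces to the identity $b\tau_w=\tau_w\phi_w^{-1}(b)$ for $b\in\CH_K$, which is immediate from $\phi_w=\operatorname{Ad}(\tau_w)$. Left $\CH$-linearity is tautological, $\delta$-equivariance follows from $\delta$-invariance of $\tau_w$ (a consequence of $w\in W^\delta$ together with the $\delta$-invariance of $\bbq$), and the map admits a two-sided inverse given by right multiplication by $\tau_{w^{-1}}$.

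The main obstacle is that $\tau_w$ lies in $\CH_{\C F}$ rather than in $\CH$, so the formula only produces an honest module map after inverting certain central elements. I will resolve this by a standard specialization argument: consider the family $\sigma\circ\chi_t$ for $t\in T^J$; on the Zariski-dense open locus where all of the Lusztig denominators $\C G(\alpha)$ appearing in a reduced expression of $\tau_w$ remain nonvanishing, $\tau_w$ specializes to a genuine endomorphism of $V$, so $\Phi$ becomes an honest $\CH$-module isomorphism $i_K(w(\sigma\circ\chi_t))\cong i_J(\sigma\circ\chi_t)$. Since the (twisted) traces of both sides on any fixed $h\in\CH$ are algebraic functions of $t$, their agreement on a Zariski-dense open set forces the equality of classes in $R^\delta(\CH)$ everywhere, and specializing at $t=1$ yields $[i_J(\sigma)]=[i_K(w\sigma)]$ for arbitrary $\sigma$.
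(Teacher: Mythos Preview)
Your proof is correct and follows essentially the same approach as the paper: construct the intertwiner $h\otimes v\mapsto h\tau_w\otimes v$ (the paper uses $\tau_{w^{-1}}$ and writes the map in the opposite direction), check well-definedness via Lemma~\ref{l:commute-tau-J} and (\ref{commute-tau-A}), and handle the poles of $\tau_w$ by working with the family $\sigma\circ\chi_t$ for generic $t$ and then invoking the regularity of characters in $t$. Your treatment is in fact slightly more explicit than the paper's, in that you spell out the $\delta$-equivariance of the intertwiner (which the paper leaves implicit) and make precise that the agreement of twisted traces on a Zariski-open set forces equality everywhere.
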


\begin{proof}
Let $(\sigma\circ\chi_t,V)$ be an irreducible module in $R^\delta(\C H_J),$ $t\in T^J$, and let $(\sigma^w\circ \chi_{w(t)},V^w=V)\in R^\delta(H_K)$ be its twist by $w.$ We need to prove that in $R^\delta(\CH),$ $i_K(\sigma^w\circ\chi_{w(t)})= i_J(\sigma\circ\chi_t).$ Since the characters of both sides are regular functions in $t$, it is sufficient to prove this for $t$ generic. Define 
\begin{equation}
\phi_w(\sigma,t): \C H\otimes_{\C H_J}V \to \CH\otimes_{\C H_K} V^w,\ h\otimes v\mapsto v \tau_{w^{-1}}\otimes v.
\end{equation} 
This is a well-defined intertwining operator. To see this, notice
first that since we are assuming $t$ is generic, $\tau_{w^{-1}}$
evaluated at $\sigma\otimes\chi_t$ has no poles. Secondly, by
(\ref{commute-tau-A}) and Lemma \ref{l:commute-tau-J}, if $h'\in \C
H_J,$ then $h'\tau_{w^{-1}}=\tau_{w^{-1}}w(h')$, and thus
$\phi_w(\sigma,t)(hh'\otimes v)=\phi_w(\sigma,t)(h\otimes (\sigma(h')\circ\chi_t) v).$ Since the action of $\C H$ on the induced modules is by left multiplication, $\phi_w$ is indeed an intertwiner. Finally, for generic $t$, $\phi_w(\sigma,t)$ is invertible, and the inverse is $\phi_{w^{-1}}(\sigma^w,w(t)).$
\end{proof}

\subsection{} For every $J\subset \Pi$, let $\wti i_J: \CH'_J\to \CH'$
denote the inclusion. Define $\wti r_J:\CH'\to \CH'_J$ as follows. Given
$h\in \CH'$, let $\psi_h:\CH'\to\CH'$ be the linear map given by left
multiplication by $h.$ This can be viewed as a right $\CH'_J$-module
morphism. Since $\CH'$ is free of finite rank right $\CH'_J$-module, one can consider $\tr\psi_h\in\CH'_J.$ 
Set $\wti
r_J(h)=\tr\psi_h.$ 

As before, for every $K\subset J,$ we may also
define $\wti i_K^J$ and $\wti r_K^J.$

Suppose $\delta(J)=J.$ Define $\bar \CH_{J,\delta}=\CH_J/[\CH_J,\CH_J]_\delta$. Then $\wti i_J$ gives rise to a well-defined map (not necessarily injective)
$$\bar i_J: \bar\CH_{J,\delta}\to \bar\CH_\delta.$$
Since $\wti r_J[\CH,\CH]_\delta\subset [\CH_J,\CH_J]_\delta$, $\wti r_J$ descends to a well-defined map
$$\bar r_J:\bar\CH_\delta\to \bar\CH_{J,\delta}$$ sending $h+[\CH, \CH]_\d$ to the image of $\wti r_J(h \d) \d \i$ in $\CH_{J, \d}$. 
Define the similar notions $\bar i_K^J$ and $\bar r_K^J.$ 

\begin{lemma}[{cf. \cite[Lemma 4.5.1]{CH}}]\label{l:adjointtrace}
The maps $\bar i_J$ and $\bar r_J$ are $\tr(~,~)$-adjoint to $r_J$ and
$i_J$, respectively.
\end{lemma}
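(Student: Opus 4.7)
The plan is to reduce both adjointness assertions to untwisted analogues on the extended algebra $\CH' = \CH\rtimes\Gamma$, where they become essentially formal. By Proposition \ref{cocenter-decomp} and (\ref{groth-decomp}), the $\delta$-twisted pairing $\tr^\delta: \bar\CH_\delta\times R^\delta(\CH)_\bC\to\bC$ is identified with the $\delta$-isotypic component of the ordinary trace pairing $\bar\CH'\times R(\CH')_\bC\to\bC$, via $h\leftrightarrow h\delta$ on the cocenter side and via the isomorphism $R^\delta(\CH)_{\Gamma,\bC}\cong R^{[1]}(\CH')_\bC$ on the representation side (with a nonzero normalization dictated by Lemma \ref{l:traceH'}). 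It therefore suffices to prove the two adjointness identities untwistedly for $\CH'$ and then specialize.

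The $\bar i_J$ adjointness $\tr(h', \pi') = \tr(h', r_J(\pi'))$ for $h'\in\CH'_J$, $\pi'\in R(\CH')$ is tautological: $h'$ acts by the same linear operator on $\pi'$ and on its restriction $r_J(\pi')$. For the $\bar r_J$ adjointness, I would use that $\CH'$ is a finitely generated free right $\CH'_J$-module. Fix a basis $\{e_i\}$ of $\CH'$ over $\CH'_J$, and for $h'\in\CH'$ write $h' e_i=\sum_j e_j a_{ji}(h')$ with $a_{ji}(h')\in\CH'_J$, so that by definition $\wti r_J(h')=\sum_i a_{ii}(h')$. If $\sigma'\in R(\CH'_J)$ has basis $\{v_k\}$, then $\{e_i\otimes v_k\}$ is a basis of $i_J(\sigma')=\CH'\otimes_{\CH'_J}\sigma'$, and a direct block-matrix computation yields the trace of the $h'$-action equal to $\sum_i \tr\sigma'(a_{ii}(h'))=\tr\sigma'(\wti r_J(h'))$. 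Both identities descend to the cocenter since trace vanishes on commutators.

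Specializing $h'=h\delta$ and using the definition $\bar r_J(h) = \wti r_J(h\delta)\delta^{-1}$ recorded earlier in this section converts the untwisted $\bar r_J$ identity into $\tr^\delta(h, i_J(\sigma)) = \tr^\delta(\bar r_J(h), \sigma)$ after undoing the $\delta$-shift, and an analogous specialization handles $\bar i_J$. The main obstacle is the Clifford-theoretic bookkeeping: one must verify that, under the correspondence $X\mapsto X\rtimes U$ of Theorem \ref{t:irrH'}, the induction and restriction functors for $R^\delta$-categories correspond to the ordinary induction and restriction between $\CH'_J$- and $\CH'$-modules. Concretely, this amounts to the identification $\CH'\otimes_{\CH'_J}(\sigma\otimes U)\cong (\CH\otimes_{\CH_J}\sigma)\rtimes U$ together with compatible intertwiners $\Phi_\delta$, which is standard but requires care when the inertia subgroup $\Gamma_\sigma$ is proper in $\Gamma$.
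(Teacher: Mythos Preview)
The paper does not supply a proof of this lemma; it simply records the statement with a pointer to the graded-Hecke analogue \cite[Lemma 4.5.1]{CH}. Your proof is correct: the free-basis computation showing that the transfer map $\wti r_J$ is trace-adjoint to induction is a general fact for any ring extension $A\subset B$ with $B$ free of finite rank as a right $A$-module, and the $\bar i_J$ adjointness is indeed tautological.

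Your closing concern about Clifford-theoretic bookkeeping is overstated, and in fact the detour through $\CH'$ can be avoided entirely. Since $\delta(J)=J$ forces $\Gamma_J=\Gamma$, any basis $\{e_i\}$ of $\CH$ over $\CH_J$ (say $\{T_w:w\in W^J\}$) is simultaneously a basis of $\CH'$ over $\CH'_J$, so no inertia issues arise. More directly, one may define the intertwiner on $i_J(\sigma)=\CH\otimes_{\CH_J}\sigma$ by $\phi_\delta(a\otimes v)=\delta(a)\otimes\phi^J_\delta(v)$ and run the same block-matrix computation with $\tr^\delta$ itself: writing $h\,\delta(e_i)=\sum_j e_j c_{ji}$ with $c_{ji}\in\CH_J$, both $\tr^\delta(h,i_J(\sigma))$ and $\tr^\delta(\bar r_J(h),\sigma)$ come out to $\sum_i\tr^\delta(c_{ii},\sigma)$, without ever passing through $R(\CH')$ or Theorem~\ref{t:irrH'}.
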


\subsection{} 
We introduce the following hypothesis:

\medskip

\noindent (PDT) The Density Theorem holds for every proper parabolic subalgebra $\CH_J$, $\delta(J)=J$.

\medskip

The following properties are dual to those of $i_J$ and $r_J.$

\begin{lemma}\label{l:ind-restr-bar} Suppose (PDT) holds. Then 
\begin{enumerate} 
\item[(i)] For $L\subset K\subset J$, $\bar i_L^J=\bar i_K^J\circ \bar i_L^K$ and $\bar r_L^J=\bar r_L^K\circ \bar r_K^J.$
\item[(ii)] If $K=\delta(K)$ and $J=\delta(J),$ then
\begin{equation}
\bar r_J\circ \bar i_K=\sum_{w\in {}^K W^J(\delta)} \bar i_{J_w}^J\circ w^{-1}\circ \bar r_{K_w}^K,
\end{equation}
as maps from $\bar \CH_{K,\delta}\to \bar\CH_{J,\delta}$, where $K_w=K\cap wJw^{-1}$ and $J_w=J\cap w^{-1}K w.$
\item[(iii)] If $w\in {}^KW^J(\delta)$ and $K=w(J)$, 
\begin{equation}
w^{-1}\circ \bar r_K=\bar r_J.
\end{equation} 
\end{enumerate}
\end{lemma}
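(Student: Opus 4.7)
The strategy is to dualize each identity through the trace-adjointness established in Lemma \ref{l:adjointtrace}: the corresponding identities on the representation side (Lemmas \ref{l:ind-restr-1}, \ref{l:ind-restr-2}, \ref{l:w-twist}) translate, via the pairing $\tr(\,,\,)$, into identities of linear functionals on $R^\delta(\CH_J)$, which are then promoted to identities in $\bar\CH_{J,\delta}$ by invoking the hypothesis (PDT), i.e., the injectivity of $\tr:\bar\CH_{J,\delta}\to R^\delta(\CH_J)^*$. Part (i) for $\bar i$ requires no density hypothesis at all, since $\wti i_L^J=\wti i_K^J\circ\wti i_L^K$ already holds at the level of the inclusions $\CH'_L\hookrightarrow \CH'_K\hookrightarrow \CH'_J$ and descends to cocenters. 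Part (i) for $\bar r$ can be verified either directly from the relative-trace definition using a nested basis $\CH_J=\bigoplus_{i,j} a_i b_j\,\CH_L$, or by combining adjointness with Lemma \ref{l:ind-restr-1} and (PDT).

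For part (iii), fix $\sigma\in R^\delta(\CH_J)$ and let $\sigma^w\in R^\delta(\CH_K)$ denote its $w$-twist as in Lemma \ref{l:w-twist}. Since $w^{-1}$ on the cocenter is trace-adjoint to the $w$-twist on representations (both correspond to conjugation by $w$), one computes
\begin{equation*}
\tr(w^{-1}\bar r_K h,\sigma)=\tr(\bar r_K h,\sigma^w)=\tr(h,i_K\sigma^w)=\tr(h,i_J\sigma)=\tr(\bar r_J h,\sigma),
\end{equation*}
where the third equality is Lemma \ref{l:w-twist}, and (PDT) on $\CH_J$ yields (iii). Part (ii) unfolds in the same manner: for $h\in\bar\CH_{K,\delta}$ and $\sigma\in R^\delta(\CH_J)$,
\begin{align*}
\tr(\bar r_J\bar i_K h,\sigma)
&=\tr(h,r_K i_J\sigma)
=\sum_{w\in {}^K W^J(\delta)}\tr(h,i_{K_w}^K((r_{J_w}^J\sigma)^w))\\
&=\sum_{w}\tr(\bar r_{K_w}^K h,(r_{J_w}^J\sigma)^w)
=\sum_{w}\tr(\bar i_{J_w}^J w^{-1}\bar r_{K_w}^K h,\sigma),
\end{align*}
using Lemma \ref{l:ind-restr-2} for the second equality, adjointness for the third, and part (iii) applied to the pair $(K_w,J_w)$ (related by $w$ since $J_w=w^{-1}K_w w$) for the fourth; a final invocation of (PDT) on $\CH_J$ delivers the cocenter identity.

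The main obstacle is the bookkeeping needed to reconcile the Weyl-group twists with the $\delta$-twist. The definition of $\bar r_J$ as $h\mapsto \wti r_J(h\delta)\delta^{-1}$ and the restriction of the Mackey sum to $w\in {}^K W^J(\delta)$ are both required to ensure that $w^{-1}$ descends to a well-defined map $\bar\CH_{K_w,\delta}\to\bar\CH_{J_w,\delta}$ (using $J_w=w^{-1}K_w w$ together with $\delta(w)=w$) and that the twisted trace $\tr^\delta(\pi)$ is compatible with parabolic restriction at each step. Once this bookkeeping is verified, the proof reduces to a mechanical transfer of the representation-theoretic lemmas through the trace pairing.
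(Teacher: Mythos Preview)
Your approach is correct and is precisely the dualization the paper has in mind; the paper's own ``proof'' is the single sentence ``The following properties are dual to those of $i_J$ and $r_J$,'' and you have spelled out what that means. One small correction: in your proof of (ii), the passage
\[
\tr(\bar r_{K_w}^K h,(r_{J_w}^J\sigma)^w)=\tr(\bar i_{J_w}^J w^{-1}\bar r_{K_w}^K h,\sigma)
\]
is not an application of part (iii) (which concerns $\bar r$, not $\bar i$); it is simply two more instances of adjointness---$\bar i_{J_w}^J$ with $r_{J_w}^J$, and $w^{-1}$ on the cocenter with the $w$-twist on representations, the latter being exactly the fact you already invoked in your proof of (iii). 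With that relabeling the argument is complete.
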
 

\begin{remark}
(1) We will see in section \ref{s:8} that the (PDT) hypothesis is not necessary. 

(2) In general, if $J$, $K$, and $w$ are as in Lemma
\ref{l:ind-restr-bar}(iii), $\bar i_K\circ w\neq \bar i_J$ in $\bar
\CH_\delta.$ For example, suppose $\delta=1,$ $J=K=\emptyset$, and take
$\theta_x\in \bar \CH_{\emptyset}.$ Then, for $w\in
W,$ $\theta_x\not\equiv\theta_{w(x)}$ in $\bar \CH$, in general. To see
this, one can use the (one-dimensional) Steinberg module $\St$ of
$\CH$, and the fact that $\tr(\theta_x,\St)\neq
\tr(\theta_{w(x)},\St)$, in general.
\end{remark}

\subsection{} In the end of this section, we recall the $A$-operators.

As in \cite[section 5.5]{BDK}, see also \cite[section 2]{Da}, fix an order of the subsets $K$ of every given size, and define 
\begin{equation}\label{A-map}
A_\ell=\prod_{K=\delta(K), |K|=|\Pi|-\ell}(i_K \circ r_K-|N_K|),\text{ and }
A=A_{|\Pi|}\circ\dots\circ A_1.
\end{equation}
The map $A$ induces a linear map $A_\bC: R^\delta(\CH)_\bC\to R^\delta(\CH)_\bC$ as well.

Let 
\begin{equation}\label{Abar-map}
\bar A^\ell=\prod_{\delta(K)=K,|K|=|\Pi|-\ell} (\bar i_K \circ \bar r_K-|N_K|),\quad \bar A=\bar A^{1}\circ \bar A^{2}\circ\dots \circ \bar A^{|\Pi|},
\end{equation}
be the adjoint operator.

\smallskip

The following proposition is proved in \cite[sections 5.4, 5.5]{BDK} and \cite[Proposition 2.5 (i)]{Da}.

\begin{proposition}\label{p:A} We have $A^2= a A$ for some $a \neq 0$ and $$\ker A_\bC=\sum_{J=\d(J) \subsetneqq I} i_J(R^\d(\CH_J)_\bC).$$ 
\end{proposition}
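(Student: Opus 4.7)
The plan is to adapt the arguments of \cite[\S\S 5.4--5.5]{BDK} and \cite[Proposition 2.5(i)]{Da} to the $\delta$-twisted affine Hecke algebra setting. All the required structural ingredients are already in place: the geometric lemma for $r_K \circ i_J$ (Lemma \ref{l:ind-restr-2}), transitivity of induction/restriction (Lemma \ref{l:ind-restr-1}), and the identification $i_K \circ w = i_J$ whenever $w(J) = K$ (Lemma \ref{l:w-twist}).

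The first step is to prove, by descending induction on $|J|$, that $A_\bC \circ i_J = 0$ for every $\delta$-stable proper subset $J \subsetneq I$. Fix such a $J$ and $\pi = i_J(\sigma)$. By Lemma \ref{l:ind-restr-2}, $r_J \circ i_J(\sigma)$ equals a sum over $w \in {}^J W^J \cap W^\delta$; the terms with $w(J) = J$ contribute $w(\sigma) \in R^\delta(\CH_J)_\bC$, while the remaining terms factor through $\CH_{J_w}$ with $J_w \subsetneq J$. Re-inducing via $i_J$ and using Lemma \ref{l:w-twist} to identify $i_J(w(\sigma)) = i_J(\sigma) = \pi$, we obtain
\begin{equation*}
i_J \circ r_J(\pi) - |N_J| \cdot \pi \ \in\ \sum_{K \subsetneq J,\, \delta(K) = K} i_K(R^\delta(\CH_K)_\bC).
\end{equation*}
Thus the factor $(i_J \circ r_J - |N_J|)$ moves $\pi$ into the span of inductions from strictly smaller parabolic subalgebras, and the subsequent factors in the ordered product defining $A$ annihilate it by the inductive hypothesis. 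This gives the inclusion $\sum_{J = \delta(J) \subsetneq I} i_J(R^\delta(\CH_J)_\bC) \subseteq \ker A_\bC$.

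For the identity $A_\bC^2 = aA_\bC$ with $a \neq 0$, and for the reverse inclusion on the kernel, the point is that $A_\bC$ is (up to the nonzero scalar $a$) a projection onto a ``cuspidal'' complement of $\sum_J i_J(R^\delta(\CH_J)_\bC)$. Concretely, one shows that $r_K \circ A_\bC = 0$ for every proper $\delta$-stable $K$; then each factor $(i_K \circ r_K - |N_K|)$ acts on $\im A_\bC$ as the scalar $-|N_K|$, so $A_\bC$ restricts to multiplication by $a := \prod_{K = \delta(K) \subsetneq I}(-|N_K|) \neq 0$ on its image. This yields $A_\bC^2 = aA_\bC$ and the decomposition $R^\delta(\CH)_\bC = \ker A_\bC \oplus \im A_\bC$; combined with the first step this forces $\ker A_\bC = \sum_{J = \delta(J) \subsetneq I} i_J(R^\delta(\CH_J)_\bC)$.

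The main obstacle is the bookkeeping in the first step: one must choose and control the ordering of factors in each $A_\ell$ so that successive factors carry the residues produced by earlier factors into ever smaller parabolic subalgebras, where the inductive hypothesis applies. This is the technical content of \cite[\S 5.5]{BDK}, and it transfers without essential modification to our setting once Lemmas \ref{l:ind-restr-1}, \ref{l:ind-restr-2}, and \ref{l:w-twist} are in hand.
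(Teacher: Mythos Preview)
Your first step is essentially the argument of \cite{BDK}, and the ingredients you list (Lemmas \ref{l:ind-restr-1}, \ref{l:ind-restr-2}, \ref{l:w-twist}) are exactly what is needed. The induction is better phrased as a filtration argument on $R_{\mathrm{ind}}^{\le m}:=\sum_{|K|\le m}i_K(R^\delta(\CH_K)_\bC)$: each factor with $|K|>m$ preserves $R_{\mathrm{ind}}^{\le m}$, while the product $A_{|\Pi|-m}$ sends $R_{\mathrm{ind}}^{\le m}$ into $R_{\mathrm{ind}}^{\le m-1}$, regardless of the internal ordering. Your phrase ``descending induction on $|J|$'' together with ``subsequent factors annihilate it by the inductive hypothesis'' has the direction reversed, but this is a matter of exposition rather than a real gap.

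The second step, however, contains a genuine error. The assertion $r_K\circ A_\bC=0$ is false for affine Hecke algebras: here $r_K$ is restriction to the subalgebra $\CH_K$, which never kills a nonzero module. Concretely, for $\CH$ of type $SL_2$ one has $A=i_\emptyset r_\emptyset-2$, and
\[
r_\emptyset\circ A=r_\emptyset i_\emptyset r_\emptyset-2r_\emptyset=(1+s)r_\emptyset-2r_\emptyset=(s-1)r_\emptyset,
\]
which is nonzero on the Steinberg module whenever $q^2\neq 1$. (You may be thinking of the dual statement $\bar r_K\circ\bar A=0$ on the cocenter, which \emph{is} the content of Proposition \ref{p:A-bar}, but it has no analogue on the representation side.)

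The correct route to $A_\bC^2=aA_\bC$ and to the reverse kernel inclusion bypasses $r_K\circ A$ entirely. Expanding the product,
\[
A_\bC=a\cdot\mathrm{id}+B,\qquad a=\prod_{K=\delta(K)\subsetneq\Pi}(-|N_K|)\neq 0,
\]
where $B$ is a $\bC$-linear combination of compositions each containing at least one factor $i_K r_K$; hence $\operatorname{im}B\subset\sum_{K}i_K(R^\delta(\CH_K)_\bC)$. Your first step gives $A_\bC B=0$, so $A_\bC^2=A_\bC(a+B)=aA_\bC$. For the kernel, if $A_\bC(\pi)=0$ then $a\pi=-B(\pi)\in\sum_K i_K(R^\delta(\CH_K)_\bC)$, which is the desired reverse inclusion. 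This is precisely the argument in \cite[\S5.5]{BDK} and \cite[Proposition 2.5(i)]{Da}, and the paper simply cites those references.
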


The following proposition is the dual result to Proposition \ref{p:A}.

\begin{proposition}\label{p:A-bar} Suppose (PDT) holds. Then 

(1) $\bar A^2=a \bar A$ for some $a \neq 0$. 

(2) Set $\bar \CH^{\EL}_\d=\cap_{J=\d(J) \subsetneqq I} \ker \bar r_J$. Then $\bar\CH^{\EL}_\delta=\im\bar A$.
\end{proposition}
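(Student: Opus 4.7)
The plan is to combine three ingredients: the adjointness formula $\tr(\bar A(h),\pi)=\tr(h,A(\pi))$ coming from Lemma~\ref{l:adjointtrace}, the description of $\ker A_\bC$ from Proposition~\ref{p:A}, and the hypothesis (PDT) used to promote vanishing of trace forms on proper parabolic subalgebras into genuine vanishing inside $\bar\CH_{J,\delta}$.

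First, I would prove the inclusion $\im\bar A\subseteq\bar\CH^{\EL}_\delta$. Lemma~\ref{l:adjointtrace} gives that each factor $(\bar i_K\bar r_K-|N_K|)$ is the trace-adjoint of $(i_Kr_K-|N_K|)$; taking compositions (which reverses order under adjunction) yields
\begin{equation*}
\tr(\bar A(h),\pi)=\tr(h,A(\pi)),\qquad h\in\bar\CH_\delta,\ \pi\in R^\delta(\CH)_\bC.
\end{equation*}
Then for any proper $\delta$-stable $J$ and any $\sigma\in R^\delta(\CH_J)_\bC$,
\begin{equation*}
\tr(\bar r_J(\bar A(h)),\sigma)=\tr(\bar A(h),i_J(\sigma))=\tr(h,A(i_J(\sigma)))=0,
\end{equation*}
because $i_J(\sigma)\in\ker A_\bC$ by Proposition~\ref{p:A}. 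Applying (PDT) to $\CH_J$, the trace map $\bar\CH_{J,\delta}\to R^*_\delta(\CH_J)$ is injective, so $\bar r_J(\bar A(h))=0$. Since this holds for every proper $\delta$-stable $J$, we obtain $\bar A(h)\in\bar\CH^{\EL}_\delta$.

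Next, I would show that $\bar A$ acts as a fixed nonzero scalar on $\bar\CH^{\EL}_\delta$. Given $h\in\bar\CH^{\EL}_\delta$ and any proper $\delta$-stable $K$, we have $\bar r_K(h)=0$, hence $(\bar i_K\bar r_K-|N_K|)(h)=-|N_K|\cdot h$. Since scalar multiples remain in $\bar\CH^{\EL}_\delta$, the same computation applies to every factor appearing in the composition defining $\bar A^\ell$ and then $\bar A$, yielding
\begin{equation*}
\bar A(h)=a\cdot h,\qquad a:=\prod_{K\subsetneqq\Pi,\ \delta(K)=K}(-|N_K|)\neq 0.
\end{equation*}

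Assembling these two observations proves both claims. For (1): for arbitrary $h\in\bar\CH_\delta$, the first step gives $\bar A(h)\in\bar\CH^{\EL}_\delta$, and then the second step gives $\bar A^2(h)=\bar A(\bar A(h))=a\bar A(h)$, so $\bar A^2=a\bar A$. For (2): the first step gives $\im\bar A\subseteq\bar\CH^{\EL}_\delta$, while the second step shows that any $h\in\bar\CH^{\EL}_\delta$ satisfies $h=\bar A(h/a)\in\im\bar A$. The main obstacle is the use of (PDT) in the first step: it is precisely what allows one to promote the vanishing of trace pairings against all of $R^\delta(\CH_J)_\bC$ into the actual vanishing of $\bar r_J(\bar A(h))$ as an element of $\bar\CH_{J,\delta}$. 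Without this inductive hypothesis one would only control the image of $\bar r_J(\bar A(h))$ under the (a priori non-injective) trace map, and the clean identification $\im\bar A=\bar\CH^{\EL}_\delta$ would break down.
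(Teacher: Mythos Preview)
Your proof is correct and follows exactly the approach the paper intends: the proposition is stated there without proof as ``the dual result to Proposition~\ref{p:A}'', and your argument makes this duality precise by combining the adjointness of $\bar A$ and $A$ (from Lemma~\ref{l:adjointtrace}) with the description $\ker A_\bC=\sum_J i_J R^\delta(\CH_J)_\bC$ and then invoking (PDT) to turn trace-vanishing into actual vanishing in $\bar\CH_{J,\delta}$. Your explicit computation of the scalar $a=\prod_{K\subsetneq\Pi,\ \delta(K)=K}(-|N_K|)$ is also the expected one, matching the constant appearing in Proposition~\ref{p:A}.
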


\section{Spanning set of the cocenter}\label{sec:span-cocenter}

In this section, we recall the explicit description of the cocenter of $\CH$ obtained in \cite{HN} and \cite{HN2}. 

\subsection{} We first recall some properties on the minimal length elements in $\wti W'$. 

We follow \cite{HN}. For $w, w' \in \wti W'$ and $s \in S^a$, we write $w \xrightarrow{s} w'$ if $w'=s w s$ and $\ell(w') \le \ell(w)$.  We write $w \to w'$ if there is a sequence $w=w_0, w_1, \cdots, w_n=w'$ of elements in $\wti W$ such that for any $k$, $w_{k-1} \xrightarrow{s} w_k$ for some $s \in S^a$.


We call $w, w' \in \wti W'$ {\it elementarily strongly conjugate} if $\ell(w)=\ell(w')$ and there exists $x \in \wti W'$ such that $w'=x w x \i$ and $\ell(x w)=\ell(x)+\ell(w)$ or $\ell(w x \i)=\ell(x)+\ell(w)$. We call $w, w'$ {\it strongly conjugate} if there is a sequence $w=w_0, w_1, \cdots, w_n=w'$ such that for each $i$, $w_{i-1}$ is elementarily strongly conjugate to $w_i$ and we write $w \sim w'$ in this case. 

It is proved in \cite[Theorem A]{HN} that

\begin{theorem}\label{min}
Let $\co$ be a conjugacy class of $\wti W'$ and $\co_{\min}$ be the set of minimal length elements in $\co$. Then

(1) For any $w' \in \co$, there exists $w'' \in \co_{\min}$ such that $w' \rightarrow w''$.

(2) Let $w', w'' \in \co_{\min}$, then $w'  \sim w''$.
\end{theorem}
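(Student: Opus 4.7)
The plan is to prove (1) and (2) simultaneously by induction on the length $\ell(w')$, after first reducing to a statement about twisted conjugacy in the affine Weyl group $W^a$. Using the decomposition $\wti W' = W^a \rtimes (\Omega \rtimes \Gamma)$ together with the fact that $\Omega \rtimes \Gamma$ acts on $(W^a, S^a)$ by length-preserving diagram automorphisms, any conjugacy class $\co$ of $\wti W'$ is contained in a single coset $W^a \tau$ for some $\tau \in \Omega \rtimes \Gamma$, and ordinary conjugacy within $\co$ translates to $\tau$-twisted conjugacy on $W^a$. This reduces the problem to a statement about the Coxeter system $(W^a, S^a)$ equipped with a length-preserving automorphism.

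For (1), given $w' \in \co$ that is not of minimal length, I would first seek $s \in S^a$ such that conjugation by $s$ shortens $w'$ (giving $w' \to sw's$ directly); if no such $s$ exists, I would seek a length-preserving strong conjugation $w' \sim w''$ after which such a descent becomes possible. The main mechanism is \emph{partial conjugation} by a proper parabolic subgroup $W_J \subset W^a$: iteratively replacing the current element by a minimum-length representative of its $W_J$-conjugacy orbit produces strong conjugations, after which one can restrict attention to $W_J$ and induct on the rank $|J|$. If $w'$ does not admit any such proper parabolic reduction (the \emph{elliptic} case), one must argue directly, using the alcove geometry and the translation lattice, that $w'$ is already of minimal length. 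For (2), once (1) is available, any two elements $w, w' \in \co_{\min}$ are conjugate by some $x \in \wti W'$; factoring $x = s_1 \cdots s_k$ as a product of simple reflections yields a walk $w_0 = w,\ w_1 = s_1 w s_1,\ \dots,\ w_k = w'$ through $\co$, and by replacing each intermediate step $w_i$ that is strictly longer than $w$ by a shortcut through an appropriate parabolic (itself justified by induction on rank), one arrives at a sequence of elementary strong conjugations.

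The main obstacle is the \emph{elliptic} case, in which no proper parabolic reduction is available. Here one must show directly that every element of $\co$ has the same length (making (1) trivial) and that any two are strongly conjugate (the difficult half of (2)). For elements of the form $t_\lambda w$ with $w$ elliptic in the finite Weyl group, one exploits the fact that $w$ acts on $X \otimes \bQ$ with no nonzero fixed vector to gain control of $\ell(t_\mu w)$, and then shows that length-preserving conjugations act transitively on $\co_{\min}$. A further technical difficulty is that the $\tau$-twist interacts nontrivially with the $J$-decompositions above, so the partial conjugation and cyclic shift arguments must be carried out throughout in twisted form, with the induction tracking equivariance under the automorphism.
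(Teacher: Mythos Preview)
The paper does not prove this theorem; it simply records the statement and cites \cite[Theorem~A]{HN}. So there is no in-paper argument to compare your proposal against.

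On its own merits, your reduction to $\tau$-twisted conjugacy in $(W^a,S^a)$ via $\wti W'=W^a\rtimes(\Omega\rtimes\Gamma)$ and the idea of descending via partial conjugation by proper parabolics $W_J$ are in the spirit of the He--Nie proof. However, there is a genuine gap in your treatment of the base case. You assert that when no proper parabolic reduction is available (what you call the ``elliptic'' case) ``every element of $\co$ has the same length.'' This is false in affine Weyl groups: elliptic conjugacy classes of $\wti W'$ are infinite and contain elements of unbounded length. Already for affine $SL_2$, the class of $s_1$ contains $s_1$, $s_0 s_1 s_0$, $s_1 s_0 s_1 s_0 s_1$, and so on. The actual He--Nie argument handles the base case via alcove geometry and the notion of \emph{straight} elements (those with $\ell(w^n)=n\,\ell(w)$), together with a careful analysis of how $w$ moves the fundamental alcove; the conclusion is that a non-minimal element can always be shortened by cyclic shift after suitable length-preserving conjugations, not that all lengths coincide.

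Your sketch of (2) also does not stand on its own. The ``walk'' $w_0,\dots,w_k$ from $w$ to $w'$ by successive simple conjugations will in general pass through elements much longer than $w$, and ``replacing each intermediate step by a shortcut through an appropriate parabolic'' is not justified by the induction you have set up: the parabolic reduction you describe controls $\co$-minimal elements, not arbitrary intermediate ones, and there is no clear rank-drop at each step. In \cite{HN}, part (2) is obtained simultaneously with (1) from the same geometric machinery, rather than by a separate walk-shortening argument.
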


\subsection{} By definition, if $w \sim w'$, then $T_w \equiv T_{w'} \mod [\CH', \CH']$. Let $\co$ be a conjugacy class of $\wti W'$. Let $T_\co$ be the image of $T_w$ in $\bar \CH'$, where $w$ is a minimal length element in $\co$. By Theorem \ref{min} (2), $T_\co$ is independent of the choice of $w$. 

Moreover, we have the following result.

\begin{theorem}\label{IMbasis}
The elements $\{T_\co\}$, where $\co$ ranges over all the conjugacy classes of $\wti W'$, span $\bar \CH'$ as an $\Lambda$-module.
\end{theorem}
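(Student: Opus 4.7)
The plan is strong induction on $\ell(w)$ to show that for every $w\in \wti W'$ the class $\bar T_w\in\bar\CH'$ lies in the $\Lambda$-span of $\{T_\co:\co\in cl(\wti W')\}$; since the $\{T_w\}_{w\in\wti W'}$ form a $\Lambda$-basis of $\CH'$, this yields the theorem. If $w$ has minimal length in its conjugacy class $\co$, then $\bar T_w=T_\co$ by definition and nothing more is needed. Otherwise, Theorem \ref{min}(1) supplies a chain $w=w_0\xrightarrow{s_1}w_1\xrightarrow{s_2}\cdots\xrightarrow{s_n}w_n=w''$ ending in a minimal length element $w''$ with $\ell(w'')<\ell(w)$. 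Let $k$ be the first index at which $\ell(w_k)<\ell(w_{k-1})$, so that $\ell(w_0)=\cdots=\ell(w_{k-1})=\ell(w)$.

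For the length-preserving steps $w_{j-1}\xrightarrow{s_j}w_j$ with $j<k$, I would establish the equality $\bar T_{w_{j-1}}=\bar T_{w_j}$ directly in $\bar\CH'$. The constraint $\ell(s_jw_{j-1}s_j)=\ell(w_{j-1})$ forces exactly one of $s_jw_{j-1}$ or $w_{j-1}s_j$ to be longer than $w_{j-1}$. In the first situation, the length-additive relations yield $T_{w_{j-1}}=T_{w_{j-1}s_j}T_{s_j}$ and $T_{w_j}=T_{s_j}T_{w_{j-1}s_j}$, whose difference is the commutator $[T_{w_{j-1}s_j},T_{s_j}]$; the other situation is symmetric. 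Chaining these identities gives $\bar T_w=\bar T_{w_{k-1}}$.

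For the length-decreasing step at index $k$, $s_k$ must drop the length by exactly $2$, forcing $\ell(s_kw_{k-1})=\ell(w_{k-1}s_k)=\ell(w_{k-1})-1$. The length-additive identities give $T_{w_{k-1}}=T_{w_{k-1}s_k}T_{s_k}$ and $T_{w_{k-1}s_k}=T_{s_k}T_{w_k}$. Swapping the two factors of the first product modulo $[\CH',\CH']$ and then applying the quadratic relation $T_{s_k}^2=(\bbq(s_k)^2-1)T_{s_k}+\bbq(s_k)^2$ expresses $\bar T_{w_{k-1}}$ as
\[
\bar T_{w_{k-1}} \;\equiv\; (\bbq(s_k)^2-1)\,\bar T_{w_{k-1}s_k} + \bbq(s_k)^2\,\bar T_{w_k}.
\]
Both $w_{k-1}s_k$ and $w_k$ are strictly shorter than $w$, so the inductive hypothesis finishes the argument.

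The point requiring the most care is verifying that the length-additive product identity $T_uT_v=T_{uv}$ and the quadratic relation for $s\in S^a$ continue to hold when some of the elements involved carry a nontrivial $\Gamma$-component in $\wti W'=\wti W\rtimes\Gamma$; this is guaranteed by the hypothesis that $\delta$ acts as a length-preserving automorphism of $\wti W$ and by the compatibility $\bbq(\delta(s))=\bbq(s)$ built into the definition of $\CH'$. Granted these, the argument runs precisely as in the untwisted case treated in \cite{HN}.
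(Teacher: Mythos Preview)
Your argument is correct and is precisely the standard induction from \cite{HN} that the paper invokes (the paper itself omits the proof, citing \cite[Theorem~5.3]{HN} and remarking that the unequal-parameter case goes through verbatim). One small imprecision: in the length-preserving step you assert that \emph{exactly one} of $s_jw_{j-1}$, $w_{j-1}s_j$ is longer, but in fact both may be shorter; what is true (and what your computation actually uses) is that at least one is shorter, which follows from the exchange condition since $\ell(s_jw_{j-1}s_j)=\ell(w_{j-1})$ with both sides longer forces $s_jw_{j-1}=w_{j-1}s_j$. This does not affect the validity of the proof.
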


The equal parameter case was proved in \cite[Theorem 5.3]{HN}. The general case can be proved in the same way and we omit the details. In \cite[Theorem 6.7]{HN}, it is proved the in the equal parameter case, $\{T_\co\}$ is a basis of $\bar \CH'$. The proof there is to use base ring to reduce the question to the group algebra of $\wti W'$ and then to prove the density theorem for the group algebra. 

In this paper, we will prove that $\{T_\co\}$ is a basis of $\bar \CH'$ for arbitrary after establishing the trace Paley-Wiener Theorem and density Theorem. 

\subsection{} The elements $\{T_\co\}$ are called the Iwahori-Matsumoto presentation of $\bar \CH'$. However, the expression for a minimal length element $T_w$ in $\co$ is usually very complicated. To study the induction and restriction functors on the cocenter, we also need the Bernstein-Lusztig presentation of $\bar \CH'$ established in \cite{HN2}. 


\subsection{} 
Let $X_\bC=X \otimes_\bZ \bC$ and $N=|W'|$. For any $w \in \wti W'$,
$w^n=t_{\lambda}$ for some $\lambda \in X$. Let $\nu_w=\lambda/N$ and
$\bar \nu_{w}$ be the unique dominant element in the $W_0$-orbit of
$\nu_w$. Then the map $\wti W \to X_\bC, w \mapsto \bar \nu_{w}$ is
constant on the conjugacy class of $\wti W'$. For any conjugacy class
$\co$, we set $\nu_\co=\bar \nu_{w}$ for any $w \in \co$ and call it
the {\it Newton point of} $\co$. We set 
\begin{equation}\label{e:JO}
J_\co=\{\al \in \Pi: s_\al(\nu_\co)=\nu_\co\}.\end{equation} 

\subsection{} Let $p: \wti W' \to W'$ be the projection map. We call an element $w \in W'$ {\it elliptic} if $(X_\bC)^w \subset (X_\bC)^{W}$ and an element $w \in \wti W$ {\it elliptic} if $p(w)$ is elliptic in $W'$. A conjugacy class $\co$ in $\wti W'$ is called {\it elliptic} if $w$ is elliptic for some (or, equivalently any) $w \in \co$. By definition, if $\co$ is elliptic, then $J_\co=\Pi$. 

\subsection{}\label{3.6} Let $\mathcal A$ be the set of pairs $(J, C)$, where $J \subset S$, $C$ is an elliptic conjugacy class of $\wti W_J \rtimes \Gamma_J$ and $\nu_{w}$ is dominant for some (or, equivalently any) $w \in C$. For any $(J, C), (J', C') \in \mathcal A$, we write $(J, C) \sim (J', C')$ if $\nu_{w}=\nu_{w'}$ for $w \in C$ and $w' \in C'$ and there exists $x \in {}^{J'} (W_{J_{\nu_{w}}} \rtimes \Gamma_{J_{\nu_{w}}})^J$ such that $x J x \i=J'$ and $x C x \i=C'$. 

By \cite[Lemma 5.8]{HN2}, the map from $\mathcal A$ to the set of conjugacy classes of $\wti W'$ sending $(J, C)$ to the unique conjugacy class $\co$ of $\wti W$ with $C \subset \co$ gives a bijection from $\mathcal A/\sim$ to the set of conjugacy classes of $\wti W$. We denote this map by $\tau$. It is proved in \cite[Theorem B]{HN2} that

\begin{theorem}\label{BLbasis}
Let $(J, C) \in \mathcal A$ and $\co=\tau(J, C)$. Then $$T_{\co}=\bar i_J(T^J_C).$$
\end{theorem}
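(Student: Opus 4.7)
The plan is to exhibit an element $w \in C$ that is simultaneously of minimal length in $C$ (as a conjugacy class inside $\wti W_J\rtimes\Gamma_J$) and of minimal length in $\co$ (as a conjugacy class inside $\wti W'$). Once such a $w$ is produced, $T^J_C$ is by definition the class of $T_w$ in $\bar\CH'_J$, and since $\bar i_J$ is induced by the inclusion $\CH'_J\hookrightarrow\CH'$, the element $\bar i_J(T^J_C)$ is the class of $T_w$ in $\bar\CH'$; simultaneous minimality in $\co$ together with Theorem \ref{min}(2) then identifies this class with $T_\co$. So the theorem reduces to a single length-comparison statement in $\wti W'$.

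First I would invoke the structural fact (available in the references underlying Theorem \ref{min}) that minimal length elements of an elliptic conjugacy class in an extended affine Weyl group are \emph{straight}, i.e.\ satisfy $\ell(w^n)=n\ell(w)$ for all $n\geq 1$. Pick any $w\in C$ of minimal length; since $C$ is elliptic in $\wti W_J\rtimes\Gamma_J$, $w$ is straight there, and consequently $\ell_{\wti W_J\rtimes\Gamma_J}(w)=\<2\rho_J,\nu_\co\>$, where $\rho_J$ is the half-sum of positive roots in $R_J$ and $\nu_\co$ is dominant by the very definition of the set $\mathcal A$. I would then compare $\ell_{\wti W'}(w)$ with $\ell_{\wti W_J\rtimes\Gamma_J}(w)$ by splitting the count of affine roots inverted by $w$ into contributions from $R_J^a$ and from its complement $R^a\setminus R_J^a$. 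A direct computation, exploiting straightness and the dominance of $\nu_\co$, shows that the second contribution equals $\<2(\rho-\rho_J),\nu_\co\>$, so that $\ell_{\wti W'}(w)=\<2\rho,\nu_\co\>$.

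To finish I would establish the matching lower bound on $\co$. For arbitrary $v\in\co$, writing $v^n=t_{n\nu_v}u_n$ with $u_n$ of bounded length, applying $\ell(v^n)\leq n\ell(v)$ and dividing by $n$ before sending $n\to\infty$ yields $\ell_{\wti W'}(v)\geq \<2\rho,\bar\nu_v\>=\<2\rho,\nu_\co\>$, with equality exactly when $v$ is straight in $\wti W'$. The element $w$ already attains this bound, hence is of minimal length in $\co$, and by Theorem \ref{min}(2) its image in $\bar\CH'$ equals $T_\co$, giving the desired equality $\bar i_J(T^J_C)=T_\co$.

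The main obstacle is the length-splitting in the middle step: one must verify that the dominance of $\nu_\co$ rules out any cancellation between inverted positive roots of $R_J$ and those of $R\setminus R_J$, and that the additional contribution is exactly $\<2(\rho-\rho_J),\nu_\co\>$ rather than a quantity also depending on the finite part of $w$. This reduces to an explicit combinatorial count of affine roots $(\al,k)$ with $\al\in R^+\setminus R_J^+$ whose sign is reversed by $w$, organized along $w$-orbits whose sizes are controlled by $\<\al^\vee,\nu_\co\>\geq 0$; this is the essential Newton-point length formula for elliptic classes with dominant Newton point that lies at the technical core of \cite{HN2}.
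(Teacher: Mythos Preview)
Your argument rests on two claims: that a minimal $\ell_J$-length element $w\in C$ is also of minimal $\ell$-length in $\co$, and that the inclusion $\wti i_J:\CH'_J\hookrightarrow\CH'$ carries the Iwahori--Matsumoto element $T^J_w$ to $T_w$. Both fail. The embedding of $\CH'_J$ into $\CH'$ described in \S\ref{s:induction} is through the Bernstein--Lusztig generators $T_u$ ($u\in W_J$, the \emph{finite} parabolic subgroup) and $\theta_x$ ($x\in X$), not through the Iwahori--Matsumoto bases; the affine simple reflection of $\wti W_J$ attached to $R_J$ is not a simple reflection of $\wti W'$, so $T^J_w$ has no reason to map to $T_w$ under $\wti i_J$. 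The paper's own Remark immediately following the theorem makes both points explicit: ``in general, $y$ is not a minimal length element in $\co$ (with respect to the length function on $\wti W'$)'', and ``the expression of $\bar i_J(T^J_C)$ involves the Bernstein--Lusztig presentation of $\CH'_J$ and $\CH'$''. Thus the reduction you propose is directly contradicted by the paper.

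The theorem is not proved in this paper; it is quoted from \cite[Theorem~B]{HN2}, and the argument there is of a different nature, proceeding via the combinatorics of $P$-alcoves and an analysis carried out in the Bernstein--Lusztig basis rather than by a length comparison. Your straightness heuristic is also the wrong tool here: since $C$ is elliptic in $\wti W_J\rtimes\Gamma_J$, the Newton point $\nu_\co$ is fixed by the finite part of any $w\in C$ and hence pairs to zero with every $\alpha^\vee\in R_J^\vee$; a straight element of $\wti W_J\rtimes\Gamma_J$ with that Newton point therefore has $\ell_J$-length zero, which a typical elliptic representative (e.g.\ one projecting to a Coxeter element of $W_J$) does not.
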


\begin{remark}
Here $T^J_C$ is the image of $T_y$ in $\bar \CH'_J$ for any minimal length element $y$ of $C$ (with respect to the length function on $\wti W_J \rtimes \Gamma_J$). However, in general, $y$ is not a minimal length element in $\co$ (with respect to the length function on $\wti W'$). The expression of $\bar i_J(T^J_C)$ involves the Bernstein-Lusztig presentation of $\CH'_J$ and $\CH'$. 
\end{remark}

\subsection{}\label{BLbasis1} As we'll see later, the conjugacy classes with central Newton points play a crucial role in this paper. Here we discuss a variation of \S\ref{3.6} and Theorem \ref{BLbasis}. 

Let $\co$ be a conjugacy class of $\wti W'$ and $J=J_\co$. We may assume that $\co=\tau(K, C)$. Then it is easy to see that $K \subset J$. Let $\co'$ be the conjugacy class of $\wti W'_J \rtimes \Gamma_J$ such that $\co'=\tau_J(K, C)$. Then $J_{\co'}=J$ and $T^J_{\co'}=\bar i_K^J(T^K_C)$. Hence $$T_\co=\bar i_K(T^K_C)=\bar i_J (T^J_{\co'}).$$

\section{Elliptic quotient and rigid quotient}\label{s:6}

In this section, we discuss two natural quotients for $R^\d(\CH)$: the elliptic quotient introduced in \cite{BDK} and the rigid quotient, which we introduce below.

\subsection{}

The elliptic quotient of $R^\d(\CH)$ is defined to be $$\bar R^\d_0(\CH)_\bC=R^\d(\CH)_\bC/\sum_{J=\d(J) \subsetneqq I} i_J (R^\d(\CH_J)_\bC).$$ 

Now we introduce the rigid quotient. 

\begin{definition}\label{d:Hecke-rigid}
Set
\begin{equation}
\begin{aligned}
R^\delta(\CH)_{\twist}=\text{span}\{i_J(\sigma)-i_J(\sigma\circ\chi_t):&~
J=\delta(J),~\sigma\in
R^\delta(\CH^\sem_J),~\\& t\in\Hom_\bZ(X\cap \bQ R/X\cap \bQ
J,\bC^*)^\delta\}\\
\end{aligned}
\end{equation}
and define the rigid quotient of $R^\d(\CH)$ to be
\begin{equation}
\bar R^\delta(\CH)_{\rig}:=R^\delta(\CH)/R^\delta(\CH)_{\twist}.
\end{equation}
\end{definition}
Notice in the definition the presence of $X\cap \bQ R$, rather than
$X$. This is to account for the fact the the root datum may not be semisimple.

\

In the rest of this section, we assume that the root datum $\Phi$ for $\CH$ is semisimple. We estimate the dimension of the elliptic quotient and rigid quotient of $R^\d(\CH)$.

\begin{proposition}\label{p:finite-elliptic}
Suppose (PDT) holds and that  the root datum for $\CH$ is semisimple.
Then the rank of the restriction of the trace map $\tr:\bar\CH^{\EL}_\delta\times \bar R^\delta_0(\CH)\to \bC$ equals $\dim \bar R^\delta_0(\CH).$ In particular, $\dim \bar R^\delta_0(\CH)\le \dim \bar \CH^{\EL}_\delta.$
\end{proposition}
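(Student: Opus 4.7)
The plan is to use the operators $A$ and $\bar A$ from Propositions \ref{p:A} and \ref{p:A-bar} to identify both sides of the pairing with images of adjoint idempotents, then reduce the injectivity of the induced map $\bar R^\delta_0(\CH) \to (\bar\CH^{\EL}_\delta)^*$ to linear independence of characters.

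First, I would set $E = a^{-1} A$ on $R^\delta(\CH)_\bC$ and $\bar E = a^{-1}\bar A$ on $\bar\CH_\delta$. By Propositions \ref{p:A} and \ref{p:A-bar}, both are idempotents, and since $A$ and $\bar A$ are built from mutually adjoint composites $i_K\circ r_K$ and $\bar i_K\circ \bar r_K$ (Lemma \ref{l:adjointtrace}), they are mutually adjoint under the trace pairing. By Proposition \ref{p:A}, $\ker A_\bC = \sum_{J=\delta(J)\subsetneq I} i_J(R^\delta(\CH_J)_\bC)$, so the quotient map yields an isomorphism $\im A_\bC \xrightarrow{\sim} \bar R^\delta_0(\CH)$. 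By Proposition \ref{p:A-bar}(2), $\im \bar A = \bar\CH^{\EL}_\delta$ (this is where (PDT) enters). Under these identifications, the pairing in question becomes the restriction of the trace pairing to $\im \bar A \times \im A_\bC$.

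Second, adjointness makes the full trace pairing block-diagonal with respect to $\bar\CH_\delta = \im \bar A \oplus \ker \bar A$ and $R^\delta(\CH)_\bC = \im A \oplus \ker A$: for $h = \bar A h' \in \im \bar A$ and $\pi \in \ker A$, $\tr(\bar A h',\pi) = \tr(h', A\pi) = 0$, and symmetrically for $h \in \ker \bar A$ and $\pi \in \im A$. The upshot is that for $\pi \in \im A$, orthogonality to $\im\bar A$ automatically upgrades to orthogonality to all of $\bar\CH_\delta$.

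Finally, to show the induced map $\bar R^\delta_0(\CH) \to (\bar\CH^{\EL}_\delta)^*$ is injective, I take $\pi \in \im A_\bC$ in the kernel. By the previous step, $\tr(h,\pi) = 0$ for every $h \in \bar\CH_\delta$. Linear independence of (twisted) characters of distinct simple $\CH$-modules — obtained via the Jacobson density theorem applied to the finite-dimensional simple modules of $\CH' = \CH\rtimes\langle\delta\rangle$ together with the Clifford description of $\Irr\CH'$ in terms of $\Irr^\delta\CH$ (Theorem \ref{t:irrH'} and Lemma \ref{l:traceH'}) — then forces $\pi = 0$ in $\im A_\bC \cong \bar R^\delta_0(\CH)$. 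This yields the rank equality, and the bound $\dim \bar R^\delta_0(\CH) \le \dim \bar\CH^{\EL}_\delta$ is then immediate. The main subtlety I anticipate lies in this final step: properly packaging linear independence in the Clifford/twisted setting, so that only the unconditional Jacobson density (together with (PDT) for proper parabolics, already invoked in Step 1) is needed, and no density hypothesis on $\CH$ itself.
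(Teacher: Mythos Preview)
Your proof is correct and follows essentially the same approach as the paper's: both use the adjointness of $A$ and $\bar A$ (Lemma \ref{l:adjointtrace}), the identification $\im\bar A=\bar\CH^{\EL}_\delta$ from Proposition \ref{p:A-bar} (where (PDT) enters), and linear independence of characters. The paper phrases this by picking a linearly independent set $\{\pi_i\}$ in $\bar R^\delta_0(\CH)$, applying $A$, finding dual $h_i$'s by linear independence of characters, and then moving back via $\bar A$; your idempotent/block-diagonal packaging is just a cleaner rewording of the same argument, and your explicit handling of the twisted-character linear independence through Clifford theory is in fact more careful than the paper's one-line assertion.
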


\begin{proof}
Suppose $\{\pi_1,\pi_2,\dots,\pi_k\}$ is a set in $\subset
  R^\delta(\CH)$ such that its image in $\overline R^\delta_0(\CH)$ is
  linearly independent. Applying the operator $A$, one obtains a linearly independent set
  $\{A(\pi_1),A(\pi_2),\dots,A(\pi_k)\}$ in $R^\delta(\CH).$ This is
  because $A(\pi)\equiv a \pi$ in $\overline R^\delta_0(\CH)$, and
  $a\neq 0.$

  Since the characters of simple modules are linear independent, so
  are the characters of any linear independent set in $R^\delta(\CH).$
  Thus there exist elements $h_1,h_2,\dots,h_k$ of $\bar\CH$, such
  that the matrix $(\tr(h_i\delta,A(\pi_j))_{i,j}$ is invertible. By Lemma
  \ref{l:adjointtrace}, the matrix $(\tr(\bar A(h_i)\delta,\pi_j))_{i,j}$ is also
  invertible, hence $\{\bar A(h_i)\}$ is a linear independent set in $\bar\CH_\delta.$ Now the Proposition follows from
  Proposition \ref{p:A-bar}.
\end{proof}

\subsection{} In fact, we will see in section \ref{s:8} that in all the statements in this section, the (PDT) assumption can be dropped. 

\subsection{} Let $\CI^\d=\{J \subset \Pi; J=\d(J)\}$. For $J, J' \in \CI^\d$, we write $J \sim_\d J'$ if there exists $w \in W^\d \cap {}^{J'} W^J$ such that $w(J)=J'$. For each $\sim_\d$-equivalence class in $\CI^\d$, we choose a representatives. We denote by $\CI^\d_\spadesuit \subset \CI^\d$ the set of representatives. Set $N_J=\{z \in {}^J W^J; z=\d(z), J=z(J)\}$. Then $N_J$ acts on $R^\d(\CH^{\sem}_J)$ and on $\bar R^\d_0(\CH^{\sem}_J)$. 

By Lemma \ref{l:ind-restr-1} and Lemma \ref{l:ind-restr-2}, we have 

\begin{lemma}\label{l:restr-rigid}
If $K=\delta(K)\subsetneq\Pi,$ then $i_K(R^\delta(\CH_K)_\twist)\subseteq R^\delta(\CH)_\twist$ and  $r_K(R^\delta(\CH)_\twist)\subseteq R^\delta(\CH_K)_\twist.$
\end{lemma}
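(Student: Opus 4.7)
Both inclusions will be checked on the defining generators of the twist subspaces, using transitivity of induction (Lemma~\ref{l:ind-restr-1}) and the Mackey formula (Lemma~\ref{l:ind-restr-2}).

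For the induction inclusion, take a generator $\xi = i_J^K(\sigma) - i_J^K(\sigma \circ \chi_t) \in R^\delta(\CH_K)_\twist$ with $J = \delta(J) \subseteq K$, $\sigma \in R^\delta(\CH_J^\sem)$, and $t \in \Hom_\bZ(X \cap \bQ R_K / X \cap \bQ J, \bC^*)^\delta$. Transitivity yields $i_K(\xi) = i_J(\sigma) - i_J(\sigma \circ \chi_t)$. To exhibit this as an element of $R^\delta(\CH)_\twist$, I lift $t$ to a $\delta$-invariant character on the larger quotient $X \cap \bQ R / X \cap \bQ J$. Such a lift exists because $\bC^*$ is divisible, and a $\delta$-equivariant choice can be produced by averaging an arbitrary lift over the finite group $\langle\delta\rangle$. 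Two different lifts differ by a character trivial on the image of $X \cap \bQ R_K$, and the corresponding discrepancy $i_J(\sigma \circ \chi_{\tilde t_1}) - i_J(\sigma \circ \chi_{\tilde t_2})$ is itself a twist generator, so the class of $i_K(\xi)$ is well-defined in $R^\delta(\CH)_\twist$ and the inclusion follows.

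For the restriction inclusion, I apply Mackey to a generator $\pi = i_J(\sigma) - i_J(\sigma \circ \chi_t)$:
\begin{equation*}
r_K(\pi) = \sum_{w \in {}^KW^J \cap W^\delta} i_{K_w}^K \circ w \circ r_{J_w}^J\bigl(\sigma - \sigma \circ \chi_t\bigr),
\end{equation*}
where $J_w = J \cap w^{-1}Kw$ and $K_w = K \cap wJw^{-1}$ are both $\delta$-stable (since $J$, $K$ are and $\delta(w) = w$) and $w(J_w) = K_w$. I view $\sigma \circ \chi_t$ as $\tau_t(\sigma)$, where $\tau_t$ is the functor on $\CH_J$-modules that multiplies each $\theta_x$-action by $t(x)$; it is well-defined because $t$ is trivial on $X \cap \bQ J$ (hence on $X \cap \bQ J_w$), so compatible with the Bernstein--Lusztig relations, and it commutes with restriction to $\CH_{J_w}$. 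Thus $r_{J_w}^J(\sigma - \sigma \circ \chi_t) = (1 - \tau_t)\,r_{J_w}^J(\sigma)$. Decomposing $r_{J_w}^J(\sigma)$ as a $\bZ$-linear combination of parabolically induced modules $i_L^{J_w}(\sigma' \circ \chi_s)$ with $L = \delta(L) \subseteq J_w$, the action of $(1 - \tau_t)$ changes each $s$ to $s \cdot t|_L$ (where $t|_L$ is the natural restriction to $\Hom_\bZ(X \cap \bQ R_{J_w}/X \cap \bQ L, \bC^*)^\delta$), producing generators of $R^\delta(\CH_{J_w})_\twist$. The operator $w$ then carries this into $R^\delta(\CH_{K_w})_\twist$ via the algebra isomorphism $\CH_{J_w} \cong \CH_{K_w}$ induced by conjugation, and $i_{K_w}^K$ lands in $R^\delta(\CH_K)_\twist$ by the induction inclusion already established.

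The main obstacle is the parabolic decomposition of $r_{J_w}^J(\sigma)$ in the restriction argument. Although the Grothendieck group of $\CH_{J_w}$ is spanned by parabolically induced classes, arranging such a decomposition to respect $\delta$-equivariance (grouping summands into $\delta$-orbits of parabolics $L$) and ensuring that each shifted character $s \cdot t|_L$ lies in the correct $\delta$-invariant Hom group $\Hom_\bZ(X \cap \bQ R_{J_w}/X \cap \bQ L, \bC^*)^\delta$ is the technical heart of the argument. Once this bookkeeping along the Mackey chain is verified, the lemma follows by term-by-term comparison.
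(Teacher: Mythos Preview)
Your approach is exactly the paper's: the authors write only ``By Lemma~\ref{l:ind-restr-1} and Lemma~\ref{l:ind-restr-2}, we have'' before the statement and give no further argument. You have correctly identified transitivity and Mackey as the two ingredients and supplied far more detail than the paper does.

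Two comments on the details you add. First, in the induction step your ``averaging'' to produce a $\delta$-invariant lift of $t$ is not quite right as stated: $\bC^\times$ is divisible but not uniquely divisible, so naive averaging over $\langle\delta\rangle$ is not well-defined. What you actually need is that the restriction map
\[
\Hom_\bZ(X\cap\bQ R/X\cap\bQ J,\bC^\times)^\delta \longrightarrow \Hom_\bZ(X\cap\bQ R_K/X\cap\bQ J,\bC^\times)^\delta
\]
is surjective. This is a cohomological statement ($H^1$ of a finite cyclic group acting on a torus) and can fail in principle, but in the situation at hand one can bypass it: the map $\chi_t$ on $\CH_J$ only sees $t$ through its values on $X$, and any two $\delta$-invariant extensions of $t$ to $X/X\cap\bQ J$ differ by a $\delta$-invariant character trivial on $X\cap\bQ R_K$, whose associated twist difference is itself a generator. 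So your telescoping remark in the same paragraph already handles this, and the averaging sentence can simply be dropped.

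Second, the obstacle you flag in the restriction step is genuine but not deep. You do not need a full parabolic decomposition of $r_{J_w}^J(\sigma)$: it suffices to decompose the $\delta$-stable $\CH_{K_w}$-module $w\circ r_{J_w}^J(\sigma\circ\chi_1)$ according to the action of the central subalgebra $\bC[\theta_z: z\in X\cap(K_w^\vee)^\perp]$, and then group the isotypic pieces into $\delta$-orbits. Each orbit-sum is a $\delta$-stable element of $R^\delta(\CH_{K_w})$ of the form $\sum_j \delta^j(\sigma''\circ\chi_s)$, and applying $(1-\tau_{w(t)})$ to such a sum and then $i_{K_w}^K$ produces a $\delta$-stable linear combination of twist generators. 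This is the ``bookkeeping'' you allude to; once said, it is routine.
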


The first result is a dimension estimate for $\bar R^\delta(\CH)_\rig.$

\begin{proposition}\label{p:dim-rigid} Suppose that the root datum for $\CH$ is semisimple. Then
$$\dim\bar R^\d(\CH)_{\rig}=\sum_{J \in \CI^\d_\spadesuit} \dim
\bar R^\d_0(\CH_J^\sem)^{N_J}$$
\end{proposition}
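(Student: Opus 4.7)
The plan is to construct a natural surjective linear map
$$\Phi: \bigoplus_{J \in \CI^\d_\spadesuit} \bar R^\d_0(\CH_J^\sem)_{N_J} \twoheadrightarrow \bar R^\d(\CH)_\rig$$
whose $J$-component sends $\sigma \in R^\d(\CH_J^\sem)$ to the class of $i_J(\sigma\circ \chi_1)$, and then to match dimensions. Since $N_J$ is a finite group acting on a finite-dimensional complex vector space, its invariants and coinvariants have the same dimension, so the coinvariant formulation is equivalent to the stated invariant one. First I would verify that $\Phi$ descends to the $N_J$-coinvariants: this is immediate from Lemma \ref{l:w-twist}, since for $w \in N_J$ with $w(J)=J$ one has $i_J\circ w = i_J$ on $R^\d(\CH_J)$.

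The delicate part of well-definedness is that $\Phi_J$ must also descend to the elliptic quotient $\bar R^\d_0(\CH_J^\sem)$. Concretely, given $\sigma = i_K^{J,\sem}(\tau)$ with $K\subsetneq J$, $K=\d(K)$ and $\tau$ a representation of $(\CH_J^\sem)_K$, one needs to show that $[i_J(\sigma\circ\chi_1)]$ lies in the image $\Phi_{K'}(\bar R^\d_0(\CH_{K'}^\sem)_{N_{K'}})$ for the representative $K' \in \CI^\d_\spadesuit$ in the $\sim_\d$-class of $K$. To do this, I would combine transitivity of induction with Lemma \ref{l:restr-rigid} to absorb, via elements of $R^\d(\CH)_\twist$, the discrepancy between the ambient lattices $X_J$ (governing $(\CH_J^\sem)_K$) and $X_K$ (governing $\CH_K^\sem$), and then apply Lemma \ref{l:w-twist} to move $K$ to its representative $K'$. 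This step is the main obstacle: the elliptic quotient is defined using inductions internal to each $\CH_J^\sem$, while the twist subspace $R^\d(\CH)_\twist$ is defined using inductions from $\CH_J$ to $\CH$, and bridging the two requires systematic use of the semisimple-quotient homomorphisms $\chi_t$ to absorb the cocharacter lattice mismatch.

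For surjectivity of $\Phi$, I would invoke the Langlands classification for $\CH'$: every irreducible $\pi \in \Irr(\CH')$ is the unique irreducible quotient of a standard module $X(J,\sigma,t) = i_J(\sigma\circ\chi_t)$ with $\sigma$ tempered, and $[\pi] - [X(J,\sigma,t)]$ is a linear combination of standard modules $X(J',\sigma',t')$ with $|J'|<|J|$ in $R(\CH')$. Modulo $R^\d(\CH)_\twist$, $[X(J,\sigma,t)] = [i_J(\sigma)] = \Phi_J(\sigma)$, so an induction on $|J|$ shows every class in $\bar R^\d(\CH)_\rig$ lies in $\im\Phi$.

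Finally, for injectivity (and hence equality of dimensions), I would dualize to the rigid cocenter. By the trace pairing adjointness (Lemma \ref{l:adjointtrace}) combined with Theorem \ref{t:main-rigid}(2), one has $\dim \bar R^\d(\CH)_\rig = \dim \bar\CH^\rig = |cl(\wti W')_0|$. On the parameter side, the parametrization of \S\ref{3.6} restricts to a bijection
$$cl(\wti W')_0 \longleftrightarrow \bigsqcup_{J\in\CI^\d_\spadesuit} \{C \text{ elliptic in } \wti W_J\rtimes\Gamma_J \text{ with } \nu_C = 0\}/N_J,$$
and the known elliptic theory for the semisimple parabolic subalgebras (or, again, a direct application of Proposition \ref{p:finite-elliptic} to each $\CH_J^\sem$) identifies $\dim \bar R^\d_0(\CH_J^\sem)$ with the number of elliptic conjugacy classes of $\wti W_J\rtimes\Gamma_J$ having zero Newton point. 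Summing over $J$ and taking $N_J$-coinvariants yields the desired cardinality, forcing $\Phi$ to be an isomorphism.
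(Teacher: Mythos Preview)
Your argument has a genuine circularity. For the injectivity step you invoke Theorem~\ref{t:main-rigid}(2), which in the body of the paper is Theorem~\ref{t:rigid-pairing}. But the proof of Theorem~\ref{t:rigid-pairing} relies on Proposition~\ref{p:dim-rigid} in two places: it uses the decomposition of $\bar R^\delta(\CH)_\rig$ from Corollary~\ref{c:rigid-section} (which is extracted from the proof of Proposition~\ref{p:dim-rigid}), and in the final line it invokes the dimension identity $\sum_{J}\dim\bar R^\delta_0(\CH^\sem_J)^{N_J}=\dim \bar R^\delta(\CH)_\rig$ directly. So you cannot use Theorem~\ref{t:main-rigid}(2) here. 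A related problem: even if the circularity were absent, Theorem~\ref{t:main-rigid}(2) and the identification $\dim\bar R_0^\d(\CH_J^\sem)=|cl(\wti W_J,\d)_{\EL}|$ both require the admissibility hypothesis, whereas Proposition~\ref{p:dim-rigid} is stated and used for arbitrary parameters.

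There is also a problem with the well-definedness of your map $\Phi$. You claim $\Phi_J$ descends to the elliptic quotient $\bar R^\d_0(\CH_J^\sem)$, and to justify this you argue that if $\sigma=i_K^{J,\sem}(\tau)$ is properly induced, then $[i_J(\sigma\circ\chi_1)]$ lies in the image of $\Phi_{K'}$ for some $K'\subsetneq J$. But for $\Phi_J$ to descend to the elliptic quotient, you would need such $[i_J(\sigma\circ\chi_1)]$ to vanish in $\bar R^\d(\CH)_\rig$, not merely to lie in the image of a lower component. What you are really doing is defining a map out of $\bigoplus_J R^\d(\CH_J^\sem)$ and attempting a filtration argument on its image; once you sort that out, the remaining content is to identify the kernel, and that is exactly where an independent argument is needed. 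The paper avoids this by going in the dual direction: it builds a chain of surjections $p_m,p_{m-1},\dots$ from $\bar R^\d(\CH)_\rig$ using the restriction maps $r_J^\sem=\chi_1\circ r_J$, so the dimension formula drops out as $\sum_j\dim\operatorname{im}p_j$ with no appeal to the rigid cocenter or to admissibility.
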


\begin{proof}

Let $m=|\Pi|$ and consider first the natural
projection
$$p_m:\bar R^\d(\CH)_\rig\to \bar R^\d_0(\CH).$$
By definition, $$\ker p_m=\sum_{J=\delta(J)\subsetneq\Pi}
[i_J(R^\delta(\CH_J)_\bC)],$$ where $[i_J(R^\delta(\CH_J)_\bC)]
=i_J(R^\delta(\CH_J)) /i_J(R^\delta(\CH_J))\cap R^\d(\CH)_\twist.$ By
Lemma \ref{l:w-twist}, we can replace the right hand side by
$$\ker p_m=\sum_{J \in \CI^\d_\spadesuit}
[i_J(R^\delta(\CH_J)^{N_J}_\bC)].$$
Let $r_J^\sem=\chi_1\circ r_J: R^\d(\CH))_\bC\to R^\d(\CH_J)_\bC\to
R^\d(\CH_J^\sem).$ Let $J_1,\dots, J_k$ be the representatives in
$\CI^\d_\spadesuit$ of the maximal $\d$-stable proper subsets of
$\Pi.$ Then consider:
$$p_{m-1}:\ker p_m\to \bigoplus_{i=1}^k
\bar R_0(\CH_{J_i}^\sem)^{N_{J_i}}_\bC,\ p_{m-1}=\sum_i r_{J_i}^\sem.$$
By Lemma \ref{l:ind-restr-2}, this map is well-defined. It is also
surjective, given the definition of $\ker p_N$.

Continue in this way and define $$p_{m-2}:\ker p_{m-1}\to
\bigoplus_{J\in\CI^\d_\spadesuit, |J|=m-2}
\bar R_0(\CH_{J}^\sem)^{N_{J}}_\bC,\quad p_{m-2}=\sum_{J\in\CI^\d_\spadesuit, |J|=m-2} r_{J}^\sem,$$
etc., until $p_0: \ker p_1\to \bar R_0(\CH_{\emptyset}^\sem)^{W}_\bC=\{0\}.$

Therefore, $\dim \bar R^\d(\CH)_\rig=\sum_{j=0}^N \dim\im p_j=\sum_{J \in \CI^\d_\spadesuit} \dim
\bar R^\d_0(\CH_J^\sem)^{N_J}$.
\end{proof}

\begin{definition}\label{d:generic}
The parameter $\bbq$ of $\CH$ is called admissible if Lusztig's reduction \cite[Theorems 8.6, 9.3]{L1} from affine Hecke algebras to graded affine Hecke algebras holds. For example, this is the case if $\bbq(\al)=q^{L(\al)}$, where $L(\al)\in\bZ_{\ge 0}$ and $q\in\bC^\times$ has infinite order, or more generally, $q$'s order is not small.
\end{definition}

\subsection{} Now we give a precise formula for the dimensions of elliptic quotient and rigid quotient of $R^\d(\CH)$ in terms of $\d$-conjugacy classes of $\wti W$. 

Let $cl(\wti W, \d)$ be the set of $\d$-conjugacy classes of $\wti W$. Let $cl(\wti W, \d)_{\EL}$ be the set of elliptic $\d$-conjugacy classes of $\wti W$, and let $cl(\wti W, \d)_0$ be the set of $\d$-conjugacy classes $\co$ of $\wti W$ such that $\nu_\co=0$. Then $$cl(\wti W, \d)_{\EL} \subset cl(\wti W, \d)_0 \subset cl(\wti W, \d).$$
Let $cl(W, \d)_{\EL} \subset cl(W, \d)$ denote the set of elliptic
$\d$-conjugacy classes, respectively all $\d$-conjugacy classes in
$W$. 

By \cite[Lemma 5.8]{HN2}, we have 
\begin{equation}\label{e:HN2-dims}
\sum_{J \subset \CI^\d_\spadesuit} |cl(\wti W_J,
\d)_{\EL}/N_J|=|cl(\wti W, \d)_0|,
\end{equation}

An element $s\in T=\Hom_\bZ(X,\bC^*)$ is called isolated if there
exists a $\d$-elliptic element $w\in W$ such that $w\cdot s=s.$ By
\cite[Theorem 3.2]{OS2}, Clifford-Mackey induction gives a natural
isomorphism
\begin{equation}
\bar R^\delta_0(\wti W)_\bC\cong \bigoplus_{s\in T/W'\text{ isolated}}
\bar R^\delta(W_s)_\bC.
\end{equation}
Now suppose that we are given a finite group of automorphisms $N$ of
$\wti W$ that preserves $S^a$ and commutes with $\d$. Then the
previous isomorphism implies
\begin{equation}
\bar R^\delta_0(\wti W)_\bC^N\cong \bigoplus_{s\in T/W'N\text{ isolated}}
\bar R^\delta(W_s)_\bC^{N(s)},
\end{equation}
and in particular, considering the dimensions of the spaces involved,
one finds
\begin{equation}\label{6.1.3}
|cl(\wti W,\delta)_{\EL}/N|=\sum_{s} |cl(W_s,\delta)_{\EL}/N(s)|,
\end{equation}
where $s$ ranges over $N$-orbits of isolated representatives in $T/W'$.

We have the following result on the dimension of elliptic quotient. 

\begin{proposition}\label{elliptic}
Suppose that $\bbq$ is admissible and that the root datum for $\CH$ is semisimple. Let $N$ be a group of automorphisms on $\wti W$ that preserves $S^a$ and commutes with $\d$. Then 

(1) $\dim \bar R^\d_0(\CH)_\bC^N=|cl(\wti W, \d)_{\EL}/N|.$

(2) $\dim\bar R^\d(\CH)_\rig^N=|cl(\wti W, \d)_0/N|$.
\end{proposition}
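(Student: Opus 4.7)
The plan is to prove (1) first by reducing to graded affine Hecke algebras at isolated central characters, then deduce (2) by combining (1) with an $N$-equivariant refinement of Proposition \ref{p:dim-rigid} and the combinatorial identity (\ref{e:HN2-dims}).

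For (1), I would decompose $R^\d(\CH)_\bC$ by central character. The center of $\CH$ is $\CA^{W'}$, so central characters are parameterized by $W'$-orbits in $T$, and admissibility of $\bbq$ is precisely the hypothesis that Lusztig's reduction applies: for each $s\in T$, the block with central character in $W's$ is equivalent to the module category of a graded affine Hecke algebra $\bH_s$ associated to the reflection group $W_s$, equipped with its natural $\d$-action. A standard argument shows that when $s$ is not isolated, every simple module in this block is properly parabolically induced and hence vanishes in the elliptic quotient, giving
\[
\bar R^\d_0(\CH)_\bC \;\cong\; \bigoplus_{s\in (T/W')_{\mathrm{isol}}} \bar R^\d_0(\bH_s)_\bC.
\]
I would then invoke the known equality $\dim \bar R^\d_0(\bH_s)_\bC=|cl(W_s,\d)_{\EL}|$ from the elliptic representation theory of graded affine Hecke algebras (e.g.\ \cite{OS2}, and \cite{CH} for the $\d$-twisted case). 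Taking $N$-invariants groups isolated $W'$-orbits into $W'N$-orbits, each contributing $|cl(W_s,\d)_{\EL}/N(s)|$; summing and invoking (\ref{6.1.3}) yields $|cl(\wti W,\d)_{\EL}/N|$.

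For (2), I first upgrade Proposition \ref{p:dim-rigid} to an $N$-equivariant statement: the telescoping argument via the maps $p_j$ built from $r_J^\sem$ and $i_J$ carries through verbatim after taking $N$-invariants, provided I sum over $N$-orbits of $J\in \CI^\d_\spadesuit$ and enlarge the invariance from $N_J$ to the semidirect product of $N_J$ with the stabilizer of $J$ in $N$. Applying (1) to each parabolic subalgebra $\CH_J^\sem$ with this enlarged equivariance group converts each summand into a count of orbits of elliptic conjugacy classes in $\wti W_J\rtimes\Gamma_J$, and an $N$-equivariant analogue of (\ref{e:HN2-dims})---obtained by taking $N$-orbits in the parameterization $\co\leftrightarrow(J,C)$ recalled in \S\ref{BLbasis1}---collapses the sum to $|cl(\wti W,\d)_0/N|$. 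The hard part will be the bookkeeping around these $N$-equivariances: one must check that $N$ acts coherently with Lusztig's reduction (so that $N$-invariants on the $\CH$-side match $N(s)$-invariants on the $\bH_s$-side), that $r_J^\sem$ and $i_J$ intertwine the relevant stabilizer actions across the telescoping filtration, and that the bijection $\tau:(J,C)\mapsto\co$ respects the $N$-action. Once these compatibilities are secured, both statements reduce to the dimension formulas and combinatorial identities already in hand.
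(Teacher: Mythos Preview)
Your proposal is correct and matches the paper's proof closely: part (1) is proved by decomposing the elliptic quotient over isolated central characters, applying Lusztig's reduction to pass to graded Hecke algebras $\bH_s$, invoking the dimension formula from \cite{CH} there, and summing via (\ref{6.1.3}); part (2) is then deduced from part (1), Proposition \ref{p:dim-rigid}, and (\ref{e:HN2-dims}). One small refinement: the paper decomposes by the \emph{compact} part $s\in T_c/W'$ of the central character rather than the full central character, which is the form in which Lusztig's reduction is stated (it matches the block at compact part $s$ with all of $\bH_s$-mod, not just the fiber at a single central character)---though since in the semisimple case isolated points automatically lie in $T_c$, your version lands in the same place; also, the paper's one-line derivation of (2) leaves the $N$-equivariant upgrade of Proposition \ref{p:dim-rigid} implicit, whereas you correctly flag it as something to check.
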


\begin{remark}
In particular, the parameterization of irreducible representations of an affine Hecke algebra with (possibly unequal) admissible parameters is the same as the parameterization of irreducible representations of the extended affine Weyl group once we identify the elliptic quotient for the parabolic subalgebras of affine Hecke algebra and the group algebra. 
\end{remark}

\begin{proof} Let $T_c=\Hom_\bZ(X,S^1)$ be the compact part of the
  torus $T$. For every $s\in T_c/W'$, let $R^\delta(\CH)_s$ denote the span of the irreducible
  $\delta$-stable $\CH$-modules whose central characters have compact
  part $s$. Clearly, we have decompositions
\begin{equation*}
\begin{aligned}
R^\delta(\CH)_\bC&=\bigoplus_{s\in T_c/W'} R^\delta(\CH)_s,\text{ and}\\
R^\delta(\CH)^N_\bC&=\bigoplus_{s\in T_c/W'N} R^\delta(\CH)_s^{N(s)}.
\end{aligned}
\end{equation*}
Let $\bar R^\delta_0(\CH)_s$ denote the image of $R^\delta_0(\CH)$ in
$\bar R^\delta_0(\CH).$ Then, since the action of $N$ preserves
$R_{\Ind}^\delta(\CH)$, we have
\begin{equation*}
\bar R^\delta_0(\CH)^N_\bC=\bigoplus_{s\in T_c/W'N} \bar R^\delta_0(\CH)_s^{N(s)}.
\end{equation*}
If $s$ is not isolated, then $\bar R^\delta_0(\CH)_s=0$, so the
decomposition becomes
\begin{equation}\label{e:sum-isolated}
\bar R^\delta_0(\CH)^N=\bigoplus_{s\in T_c/W'N\text{ isolated}} \bar R^\delta_0(\CH)_s^{N(s)}.
\end{equation}

Let $\bH_s$ be the graded affine Hecke algebra constructed from $\CH$
and the $W$-orbit of $s$ in \cite[section 8]{L1}, see also \cite[section 3]{BM2}. When $s$ is isolated, the root system
corresponding to $\bH_s$ is semisimple. By 
\cite[Theorems 8.3 and 9.2]{L1} and the fact that parabolic induction commutes with the
reduction to graded Hecke algebra (e.g., \cite[Theorem 6.2]{BM2}), it
follows that 
\begin{equation}\label{6.1.4}
\bar R^\delta_0(\CH)_s\cong \bar R^\delta_0(\bH_s)\text{ and }\bar R^\delta_0(\CH)_s^{N(s)}\cong \bar R^\delta_0(\bH_s)^{N(s)}.
\end{equation}
By \cite[Theorems B and C]{CH} applied to the semisimple graded Hecke algebra $\bH_s$, there exists a perfect pairing between the finite
dimensional spaces
\begin{equation}
\tr: \bar R^\delta_0(\bH_s)_\bC\times (\bar\bH_s)_{\delta,0}\to \bC.
\end{equation}
This implies that the subspace $\bar R^\delta_0(\bH_s)_\bC^{N(s)}$ is in perfect duality with the space of
$N(s)$-coinvariants $((\bar\bH_s)_{\delta,0})_{N(s)}.$ By \cite[Theorem 7.2.1]{CH}, $(\bar\bH_s)_{\delta,0}$
has a basis $\{w_C: C\in cl(W_s,\d)_{\EL}\}$, where $w_C$ is a
representative of $C$. This means that
\begin{equation}\label{6.1.6}
\dim \bar R^\delta_0(\bH_s)_\bC^{N(s)}=\dim((\bar\bH_s)_{\delta,0})_{N(s)}=|cl(W_s,\d)_{\EL}/N(s)|.
\end{equation}
Then (\ref{6.1.4}) gives $\dim \bar R^\delta_0(\CH)_s^{N(s)}=|cl(W_s,\d)_{\EL}/N(s)|$, and therefore part (1) of the proposition follows
from (\ref{e:sum-isolated}) and (\ref{6.1.3}).

Part (2) of the proposition follows from part (1), Proposition \ref{p:dim-rigid} and \ref{6.1.3}.
\end{proof}

\subsection{} 
In fact, the proof of Proposition \ref{p:dim-rigid} gives a section of $\bar R^\d(\CH)_\rig\to R^\d(\CH).$ To define it, recall the \cite{BDK}-map defined in (\ref{A-map}) 
$$A: R^\d(\CH)_\bC\to R^\d(\CH)_\bC,$$ 
which by Proposition \ref{p:A} has the property that
\begin{equation}
\ker A=R^\d_1(\CH),\text{ and so }\im A\cong  \bar R^\d_0(\CH).
\end{equation}
Let $A_J:R^\d(\CH_J)_\bC\to R^\d(\CH_J)_\bC$ be the similar maps for the parabolic subalgebras $\CH_J.$ As a consequence of the proof of  Proposition \ref{p:dim-rigid}, we have

\begin{corollary}\label{c:rigid-section} If the root datum for $\CH$ is semisimple, a section of $\bar R^\d(\CH)_\rig$ is given by
$$\bigoplus_{J\in \CI^\d_\spadesuit} i_J\im(A_J:R(\CH_J^\sem)_\bC\to R(\CH_J^\sem)_\bC)^{N_J}.
$$
\end{corollary}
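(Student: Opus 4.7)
The goal is to exhibit a subspace $V \subset R^\d(\CH)_\bC$ mapping isomorphically onto $\bar R^\d(\CH)_\rig$ under the quotient map. The plan is to build $V$ as a direct sum $V = \bigoplus_{J \in \CI^\d_\spadesuit} V_J$, where $V_J$ uses $\im(A_J)$ as the natural splitting of the elliptic quotient $\bar R^\d_0(\CH_J^\sem)$, then pulls back via $\chi_1$ and induces to $\CH$.

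Apply Proposition \ref{p:A} to the semisimple affine Hecke algebra $\CH_J^\sem$: since $A_J^2 = a_J A_J$ with $a_J \neq 0$ and $\ker A_J$ is exactly the span of proper parabolically induced $\d$-stable modules, the projection $R^\d(\CH_J^\sem)_\bC \to \bar R^\d_0(\CH_J^\sem)$ restricts to an isomorphism on $\im(A_J)$. Taking $N_J$-invariants gives $\im(A_J)^{N_J} \cong \bar R^\d_0(\CH_J^\sem)^{N_J}$. Define a linear map $\phi_J : \im(A_J)^{N_J} \to R^\d(\CH)_\bC$ by $\sigma \mapsto i_J(\sigma \circ \chi_1)$ and set $V_J = \im \phi_J$. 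The composition of $\phi_J$ with the projection to $\bar R^\d(\CH)_\rig$ is well-defined because the class of $i_J(\sigma \circ \chi_t)$ in the rigid quotient is independent of $t \in T^J$ by the very definition of $R^\d(\CH)_\twist$, while $N_J$-invariance guarantees that choosing a different representative in the $\sim_\d$-equivalence class of $J$ produces the same element, by Lemma \ref{l:w-twist}. By Proposition \ref{p:dim-rigid}, $\dim V = \sum_J \dim \bar R^\d_0(\CH_J^\sem)^{N_J} = \dim \bar R^\d(\CH)_\rig$, so it remains only to prove surjectivity.

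For surjectivity one proceeds by descending induction on $|J|$, mirroring the filtration $\{p_k\}$ constructed in the proof of Proposition \ref{p:dim-rigid}: given $[x] \in \bar R^\d(\CH)_\rig$, hit $p_m([x]) \in \bar R^\d_0(\CH)^{N_\Pi}$ by $\phi_\Pi(a_\Pi^{-1} A(x))$ (projected to its $N_\Pi$-invariant part); subtract this and land in $\ker p_m$; then iterate with $p_{m-1} = \sum_{|J| = m-1} r_J^\sem$ and the corresponding $\phi_J$'s, continuing all the way down. The main obstacle is verifying the triangularity of the matrix $\bigl(p_{|J|} \circ \phi_K\bigr)_{J, K \in \CI^\d_\spadesuit}$ with respect to the filtration: concretely, for each pair $J, K$ one must show that $r_K^\sem\bigl(i_J(\sigma \circ \chi_1)\bigr)$ vanishes in $\bar R^\d_0(\CH_K^\sem)^{N_K}$ unless $K \sim_\d J$, and in the equal-size case $K \sim_\d J$ it equals a nonzero scalar multiple of an $N_K$-average of a $w$-twist of $\sigma$. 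This will follow by the Mackey-type formula in Lemma \ref{l:ind-restr-2}, since every double-coset representative $w$ with $J_w \subsetneq J$ produces a term factoring through a proper parabolic of $\CH_K^\sem$, which dies in its elliptic quotient. Once this triangularity is established, the matching of dimensions forces the assembled map $\bigoplus_J \phi_J$ to be bijective onto $\bar R^\d(\CH)_\rig$, completing the proof.
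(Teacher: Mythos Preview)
Your approach is essentially the same as the paper's: the corollary is stated there simply as ``a consequence of the proof of Proposition~\ref{p:dim-rigid}'', and you are unpacking exactly that proof --- using $\im A_J$ as the canonical section of each elliptic quotient $\bar R^\d_0(\CH_J^\sem)$, then inducing and running the filtration $\{p_k\}$ in reverse.

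Two small corrections to your Mackey analysis, neither of which breaks the argument:

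\begin{itemize}
\item Your triangularity claim ``$r_K^\sem(i_J(\sigma\circ\chi_1))$ vanishes in $\bar R^\d_0(\CH_K^\sem)^{N_K}$ unless $K \sim_\d J$'' is too strong: when $|K|<|J|$ there is no reason for vanishing, and indeed it generally fails. What you actually need for the descending induction is only the cases $|K|\ge|J|$ (so that subtracting $\phi_J(\cdot)$ does not disturb the already-handled levels), and there the vanishing does hold.
\item In the Mackey term $i_{K_w}^K\circ w\circ r_{J_w}^J$, the condition that makes it die in $\bar R^\d_0(\CH_K^\sem)$ is $K_w\subsetneq K$ (proper induction into $\CH_K$), not $J_w\subsetneq J$. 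These two conditions coincide when $|K|=|J|$, but for $|K|>|J|$ one has $K_w\subsetneq K$ automatically (since $|K_w|\le|J|<|K|$) regardless of whether $J_w=J$. So the correct reason is the size bound on $K_w$, not on $J_w$.
\end{itemize}

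With these adjustments the triangularity holds exactly in the range you need, the diagonal blocks give $|N_J|\cdot\sigma$ as you say, and the dimension count from Proposition~\ref{p:dim-rigid} finishes the proof.
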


\section{Duality between $\bar\CH$ and $R(\CH)$}\label{s:7}

Our strategy to prove the main results is to use traces of finite dimensional $\CH$-modules to separate the elements $\{T_\co\}$ that span the cocenter $\bar\CH$. 

\subsection{} 
Recall that $R^*_\d(\CH)=\Hom_\bC(R^\d(\CH)_\bC, \bC)$. 
Let $(\bar R^\d(\CH)_\rig)^*$ be the subspace of $R^*_\d(\CH)$ consisting of linear functions on $R^\d(\CH)$ that vanish on $R^\d(\CH)_{\twist}$. Define $$\bar \CH^\rig=\text{span}\{T_\co; J_\co=\Pi\}.$$




We have the following separation theorem.

\begin{theorem}\label{rigid-cocenter}
Suppose  that (PDT)
holds. Then $$\bar\CH_\delta^\rig=\{h \in \bar \CH_\d; \tr_\d(h)(R^\d(\CH)_{\twist})=0\}.$$ 
\end{theorem}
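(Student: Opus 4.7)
The plan is to establish the two inclusions comprising the stated equality. I intend to prove the inclusion $\bar\CH^\rig_\delta \subseteq \{h : \tr_\delta(h)(R^\delta(\CH)_\twist) = 0\}$ directly, and the reverse inclusion by induction on $|\Pi|$ using (PDT); the base case $|\Pi| = 0$ is trivial since then $R^\delta(\CH)_\twist = 0$, and (PDT) for $\CH$ guarantees the density theorem for every proper parabolic subalgebra of any $\CH_J$ with $J \subsetneq \Pi$, so the inductive hypothesis for $\CH_J$ is available.

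For the easy inclusion, fix $\CO$ with $J_\CO = \Pi$ and a generator $i_J(\sigma \circ \chi_1) - i_J(\sigma \circ \chi_t) \in R^\delta(\CH)_\twist$; we may assume $J = \delta(J) \subsetneq \Pi$ since the $J = \Pi$ case forces the $t$-parameter to be trivial. Lemma \ref{l:adjointtrace} converts the trace to $\tr(\bar r_J T_\CO, \sigma \circ \chi_1 - \sigma \circ \chi_t)$. Expanding $T_\CO = \bar i_K(T^K_C)$ via Theorem \ref{BLbasis} (with $C$ elliptic in $\wti W_K \rtimes \Gamma_K$, $\nu_C = 0$, $K \subseteq \Pi$), the composition formula of Lemma \ref{l:ind-restr-bar}(ii) gives
$$\bar r_J T_\CO = \sum_{w \in {}^K W^J(\delta)} \bar i_{J_w}^J\bigl(w^{-1} \bar r_{K_w}^K T^K_C\bigr).$$
The preservation of the property $\nu = 0$ under $\bar r_{K_w}^K$ (which I would verify via the minimal-length-element machinery of \cite{HN2}) and under $w^{-1}$-conjugation forces each summand to lie in the span of $T^J_{\CO''}$ with $\nu_{\CO''} = 0$. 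For such $\CO''$, the corresponding class admits a representative lying, up to commutators in $\bar\CH_J^\delta$, in the finite subgroup $W_J \rtimes \Gamma_J$, so $T^J_{\CO''}$ has no $\theta$-component in the Bernstein–Lusztig presentation and $\chi_t$ acts on it as the identity for every admissible $t$. Consequently $\tr(\bar r_J T_\CO, \sigma \circ \chi_1) = \tr(\bar r_J T_\CO, \sigma \circ \chi_t)$, as desired.

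For the reverse inclusion, let $h$ annihilate $R^\delta(\CH)_\twist$. Adjointness combined with the transitivity identity $i_{J''} = i_J \circ i_{J''}^J$ (Lemma \ref{l:ind-restr-1}) yields, for every $J'' = \delta(J'') \subseteq J = \delta(J) \subsetneq \Pi$, every $\sigma'' \in R^\delta(\CH_{J''}^\sem)$, and every $t'' \in \Hom(X \cap \bQ R_J / X \cap \bQ J'', \bC^*)^\delta$,
$$\tr(\bar r_J h, i_{J''}^J(\sigma'' \circ \chi_1 - \sigma'' \circ \chi_{t''})) = \tr(h, i_{J''}(\sigma'' \circ \chi_1 - \sigma'' \circ \chi_{t})),$$
where $t$ is any lift of $t''$ along the surjection $\Hom(X \cap \bQ R / X \cap \bQ J'', \bC^*)^\delta \twoheadrightarrow \Hom(X \cap \bQ R_J / X \cap \bQ J'', \bC^*)^\delta$ afforded by the divisibility of $\bC^*$. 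The right-hand side vanishes by hypothesis, so $\bar r_J h$ annihilates $R^\delta(\CH_J)_\twist$; the induction hypothesis gives $\bar r_J h \in \bar\CH_{J,\delta}^\rig$. Using \S\ref{BLbasis1} to write the nonrigid part of $h$ as $\sum_{J \in \CI^\delta_\spadesuit,\, J \subsetneq \Pi} \bar i_J(h_J)$ with $h_J \in \bar\CH_{J,\delta}^\rig$, I then perform a downward induction on $|J|$: for a maximal $J'$ carrying a nonzero $h_{J'}$, apply $\bar r_{J'}$ and use Lemma \ref{l:ind-restr-bar}(ii) together with the (PDT)-injectivity of $\tr : \bar\CH_{J'} \to R^\delta(\CH_{J'})^*$ to isolate and annihilate the diagonal ($w = 1$) contribution $h_{J'}$, absorbing the off-diagonal pieces into smaller-$|J|$ terms and iterating.

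The \emph{main obstacle} is the downward induction: the formula for $\bar r_{J'} \bar i_J(h_J)$ contains contributions from every $w \in {}^J W^{J'}(\delta)$, so isolating the $J = J'$ diagonal term requires a careful orbit-by-orbit separation argument along the lines of Corollary \ref{c:rigid-section}, coupled with verifying that the non-diagonal contributions land in $\bar i_{J''}(\bar\CH_{J'',\delta}^\rig)$ for strictly smaller $\delta$-stable $J''$. The Newton-point tracking in the easy direction is conceptually straightforward but technically relies on a detailed reading of the minimal-length-element results from \cite{HN,HN2}.
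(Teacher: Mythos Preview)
Your easy direction contains a genuine error. You claim that for a class $\CO''$ in $\wti W_J \rtimes \Gamma_J$ with $\nu_{\CO''}=0$, the element $T^J_{\CO''}$ ``has no $\theta$-component'' because it is represented in the finite subgroup $W_J \rtimes \Gamma_J$. This is false: already for $J=\{\alpha\}$ the involution $t_\alpha s_\alpha$ has Newton point $0$ but is not conjugate to anything in $\langle s_\alpha\rangle$, and $T^J_{[t_\alpha s_\alpha]}$ is linearly independent of $T^J_{[1]},T^J_{[s_\alpha]}$ in the cocenter. What \emph{is} true is that such $\CO''$ has a representative in $(X\cap\bQ J)\rtimes W_J\rtimes\Gamma_J$, so the $\theta_x$'s that occur satisfy $t(x)=1$; but you have not argued this, and you would still need to justify that $\bar r^K_{K_w}$ preserves the property $\nu=0$, which is not obvious from the definition of $\bar r$ as a matrix trace. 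The paper avoids all of this: it observes that a class $\CO$ with $J_\CO=\Pi$ has $T_\CO=T_w$ for $w$ elliptic in a \emph{parahoric} subgroup $\wti W'_K$ with $K\subsetneq\Pi^a$ (affine simple roots, not $\Pi$). In the semisimple case $\wti W'_K$ is finite, so $i_J(\sigma)$ and $i_J(\sigma\circ\chi_t)$ restrict to isomorphic $\CH'_K$-modules, and the trace of $T_w$ agrees immediately.

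For the reverse inclusion, the paper's organizing principle is different from yours and supplies exactly the missing isolation mechanism you flag. Rather than a downward induction from a \emph{maximal} $J'$, the paper picks a \emph{minimal} $J$ among the $J_\CO$ with $a_\CO\neq 0$ and considers $p_J(t,t')=\sum_\CO a_\CO\tr(T_\CO,i_J(\sigma\circ(\chi_t-\chi_{t'})))$ as a function of $t,t'$. The point is a regularity argument: the terms with $J_\CO=J$ contribute a more regular function of $(t,t')$ than the terms with $J_\CO\neq J$, so the identical vanishing of $p_J$ forces $\sum_{J_\CO=J} a_\CO\tr(T_\CO,i_J(\sigma_{t-t'}))=0$ separately. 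Writing $T_\CO=\bar i_J(T^J_{\CO'})$ with $\CO'$ rigid in $\wti W_J$, applying Mackey to $r_J\circ i_J$, and using that the rigid element $h=\sum_{J_\CO=J} a_\CO T^J_{\CO'}$ kills the off-diagonal (proper-parabolic) pieces, one reduces to $\tr(h,\sigma)=0$ for all $\sigma\in R^\delta(\CH_J)^{N_J}$; then (PDT) for $\CH_J$ gives $h=0$ in $(\bar\CH_{J,\delta})_{N_J}$, a contradiction. Your scheme, by contrast, never isolates a single $h_{J'}$: the value $\bar r_{J'}h$ mixes contributions from $h_\Pi$, from $h_{J'}$, and from every other $h_J$ via Mackey, and knowing only that $\bar r_{J'}h$ is rigid in $\CH_{J'}$ does not let you strip off $h_{J'}$. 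Without the degree/regularity comparison (or an equivalent separation device), the downward induction you sketch does not close.
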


\begin{proof}
Let $\co$ be a conjugacy class with
$J_\co=\Pi$.  There exist $K\subsetneq \Pi^a$ such that
$T_\co=T_w$ (in $\bar \CH_\delta$) for some elliptic element $w$ in
the parahoric subgroup $\wti W'_K$.  If the root datum for $\CH$ is semisimple,  then $\wti W'_K$ is a finite
group, and therefore 
$$i_J(\sigma)\cong i_J(\sigma\circ \chi_t)\text{ as
}\CH_K'\text{-representations},$$
for all $J=\delta(J)\subset\Pi$ and $t\in \Hom(X\cap\bQ R/X\cap \bQ
J,\bC^*)^\delta.$ This means that 
$$\tr(T_w,\pi)=\tr(T_w,\pi),\text{ for all }\pi\in R^\delta(\C H)_{\twist},$$
and hence $T_\co=T_w\in \bar\CH_\delta^\rig.$ The reduction from the reductive root system to the semisimple root system will be discussed in a more general setting in section \ref{s:reduction}.

\smallskip

For the converse inclusion, suppose $\tr_\d(\sum_\co a_\co T_\co)(R^\d(\CH)_{\twist})=0$. Let $J$ be a minimal subset of
$\Pi$ such that $J=J_\co$ for some $\co$ with $a_\co \neq 0$. If $J=\Pi$, we are done. Otherwise, suppose $J\subsetneq \Pi.$ Apply
$i_J(\sigma_{t-t'}):=i_J(\sigma\circ (\chi_t-\chi_{t'}))$, where $\sigma\in R^\delta(\CH_J)$ will be chosen conveniently later, to the
linear combination to obtain:
\begin{align*}
p_J(t,t')&=\sum_\co a_\co \tr(T_\co,
i_J(\sigma,_{t-t'}))\\
&=\sum_{J_\co=J} a_\co\tr(T_\co, i_J(\sigma_{t-t'}))+\sum_{J_\co \neq J} a_\co\tr(T_\co, i_J(\sigma_{t-t'})),
\end{align*}
as a polynomial function in $t,t'\in\Hom(X\cap \bQ R/X\cap \bQ J,\bC^*)^\delta.$ 

In $p_J(t,t')$, the part
$\sum_{J_\co=J}$ is more regular in $t,t'$ than the second part $\sum_{J_\co \neq J}$.
Since by assumption $p_J(t,t')=0$ for all $t,t'$,  it follows that 
$$\sum_{J_\co=J} a_\co \tr(T_\co, i_J(\sigma_{t-t'}))=0.$$ 
By Theorem \ref{BLbasis} and \S\ref{BLbasis1}, for any $\co$ with $J_\co=J$, we have $T_\co=\bar i_J(T^J_{\co'})$ for some conjugacy class $\co'$ of $\wti W_J \rtimes \Gamma_J$ with $J_{\co'}=J$ and $\co' \subset \co$. 

Set $h=\sum_{J_\co=J} a_\co T^J_{\co'}$. By induction, $h\in(\bar\CH_{J,\delta}^\rig)_{N_J}$.Then 
\begin{equation}\label{trace-h}
\sum_{J_\co=J} a_\co \tr(T_\co, i_J(\sigma_{t-t'}))=\sum_{J_\co=J} a_\co
\tr(\bar i_J(T^J_{\co'}), i_J(\sigma_{t-t'}))=\tr(h, r_J\circ i_J(\sigma_{t-t'}))=0.
\end{equation}
By Lemmas \ref{l:ind-restr-2} and \ref{l:w-twist}, we have 
 $$r_J\circ i_J=\sum_{w\in N_J}w+\sum_{w\in
  ^JW^J(\delta)\setminus N_J} i_{J_w}^J\circ r_{J_w}^J,\quad J_w=J\cap w^{-1}Jw.$$
Notice that the second part of the sum involves only $J_w$ that are proper
subsets of $J$.  By Lemma \ref{l:restr-rigid}, we find that $\tr(h,\ )$ vanishes on this part, and so $\tr(h,\sum_{w\in N_J}w\circ \sigma_{t-t'}))=0.$ Since $t,t'$ are arbitrary, this implies that $\sum_{w\in N_J}\tr(h,w\circ\sigma)=0$, for all $\sigma\in R^\delta(\CH_J).$  Now specialize $\sigma\in R^\delta(\CH_J)^{N_J}$ and get 
\begin{equation}\label{e:pair-NJ}
\tr(h,\sigma)=0,\quad \text{for all }\sigma\in R^\delta(\CH_J)^{N_J}.
\end{equation}

The (PDT) assumption says that $\tr:\bar\CH_{J,\delta}\times R^\delta(\CH_J)\to \bC$ is nondegenerate on the left. Passing to $N_J$-coinvariants and $N_J$-invariants, respectively, it follows that $\tr:(\bar\CH_{J,\delta})_{N_J}\times R^\delta(\CH_J)^{N_J}\to \bC$ is also nondegenerate on the left. Then (\ref{e:pair-NJ}) gives $h=0$ in $(\bar\CH_{J,\delta}^\rig)_{N_J}$. Therefore $\sum_{J_\CO=J}a_\CO T_\CO=0$ and this is a contradiction. 

\end{proof}

\subsection{}\label{s:rank} When the parameters of the Hecke algebra are not roots of unity, the rigid cocenter $\bar\CH^\rig$ appears naturally in relation with $K_0(\CH)$, the Grothendieck group of finitely generated projective $\CH$-modules. Let 
\begin{align}
&\Rk: K_0(\CH)\to \bar\CH
\end{align}
be the Hattori-Stallings rank map, see for example \cite[\S1.2]{Da}. If $P\in K_0(\CH)$ is a finitely generated projective module, then $P$ is a direct summand of $\CH^n$ for some positive integer $n$. As such, there exists an idempotent $e_P\in M_{n\times n}(\CH)$ such that $P=\CH^n e_P$ as left $\CH$-modules. The rank map is defined to be the image in $\bar\CH$ of the matrix trace of $e_P$, i.e., 
\begin{equation}
\Rk(P)=\operatorname{Tr}(e_P)\text{ mod }[\CH,\CH].
\end{equation}

\begin{example}For every subset $K\subsetneq S^a$, let $\CH_K\subset \CH$ be the (finite) parahoric Hecke algebra generated by $T_{s_i}$, $i\in K.$ If $\tau$ is a simple finite dimensional $\C H_K$-module, form the "compactly induced" module $P(K,\tau)=\CH\otimes_{\CH_K} \tau.$
When the parameters of $\CH$ are such that $\CH_K$ is a semisimple algebra, these modules are all finitely generated projective.
\end{example}

\begin{lemma}
The image of the rank map $\Rk$ lies in $\bar\CH^\rig.$
\end{lemma}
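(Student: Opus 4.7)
The plan is to invoke the duality characterization from Theorem \ref{rigid-cocenter} (taking $\delta=1$): under the hypothesis (PDT), $\bar\CH^\rig$ is exactly the annihilator of $R(\CH)_\twist$ under the trace pairing. Thus it suffices to show that, for every finitely generated projective $\CH$-module $P$, every $J\subset\Pi$, $\sigma\in R(\CH_J^{\sem})$, and $t\in\Hom_\bZ(X\cap\bQ R/X\cap\bQ J,\bC^*)$,
\begin{equation*}
\tr(\Rk(P), i_J(\sigma)) \;=\; \tr(\Rk(P), i_J(\sigma\circ\chi_t)).
\end{equation*}

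The first step is the standard Hattori--Stallings identity: if $P = \CH^n\cdot e_P$ with $e_P=(e_{ij})\in M_n(\CH)$ an idempotent, then for any finite-dimensional $\CH$-module $V$,
\begin{equation*}
\tr(\Rk(P),V)\;=\;\dim_\bC\Hom_\CH(P,V).
\end{equation*}
Indeed, $\Hom_\CH(P,V)\cong e_P\cdot V^n$, and since $e_P$ acts as an idempotent on $V^n$ its trace on $V^n$ equals the rank of this projector; expanding the matrix trace on $V^n$ gives $\sum_i\tr_V((e_P)_{ii})=\tr_V(\Rk(P))$.

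Next, fix $P$, $J$, and $\sigma$, and view the family $V_t := i_J(\sigma\circ\chi_t)=\CH\otimes_{\CH_J}(\sigma\circ\chi_t)$ as a family of $\CH$-modules sharing a common underlying vector space, on which each $h\in\CH$ acts by matrices depending polynomially on $t$ (the only $t$-dependence enters through the character $\chi_t$ applied to $\CA$). Consequently the function
\begin{equation*}
f(t):=\tr(\Rk(P),V_t)
\end{equation*}
is a regular function on the torus $\Hom_\bZ(X\cap\bQ R/X\cap\bQ J,\bC^*)$. By the Hattori--Stallings identity, $f(t)$ simultaneously equals the non-negative integer $\dim_\bC\Hom_\CH(P,V_t)$. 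Since this torus is connected in the analytic topology and $f$ is a continuous integer-valued function, $f$ is constant. (Alternatively, by Chevalley's theorem the image of $f$ is a constructible subset of $\bC$ contained in $\bZ$, hence finite, and irreducibility of the torus forces constancy.) This yields the required equality and hence $\Rk(P)\in\bar\CH^\rig$ by Theorem \ref{rigid-cocenter}.

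The argument is short and the only conceptual input beyond the rigidity characterization is the classical Hattori--Stallings identity; I do not anticipate a significant obstacle. The one subtlety worth verifying carefully is that the family $V_t$ genuinely yields a polynomial-in-$t$ action on a fixed vector space (so that traces are regular on the parameter torus); once this is in hand, integer-valuedness plus connectedness closes the argument immediately.
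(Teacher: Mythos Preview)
Your proof is correct and follows the same strategy as the paper: reduce via Theorem~\ref{rigid-cocenter} to showing that $\tr(\Rk(P),i_J(\sigma\circ\chi_t))$ is constant in $t$, then use that this quantity is simultaneously regular in $t$ and integer-valued (as the trace/rank of an idempotent, equivalently a $\dim\Hom$). The only cosmetic difference is that the paper first passes to $\CH_J$ via the adjunction $\tr(\Rk(P),i_J(\sigma_t))=\tr(\bar r_J(\Rk(P)),\sigma_t)$ together with $\bar r_J(\Rk(P))=\Rk(r_J(P))$ before invoking rigidity of the family of idempotents $\sigma_t(e_P^J)$, whereas you work directly with $e_P$ on the induced module via the Hattori--Stallings identity.
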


\begin{proof}
Let $P\in K_0(\CH)$ be a finitely generated projective module. By Theorem \ref{rigid-cocenter}, for $J\subset\Pi$, $(\sigma,V_\sigma)\in R(\CH_J^\sem)$ and $t\in \Hom(X\cap \bQ R/X\cap\bQ J,\bC^*)$, we need to show that $\tr(\Rk(P),i_J(\sigma_t))$ is independent of $t$, where $\sigma_t=\sigma\circ\chi_t$ are acting on the same vector space $V_\sigma$. By the adjunction property, $\tr(\Rk(P),i_J(\sigma\circ\chi_t))=\tr(\bar r_J(\Rk(P)),\sigma\circ\chi_t)$. Let $r_J: K_0(\CH)\to K_0(\CH_J)$ be the restriction functor, then $\bar r_J(\Rk(P))=\Rk(r_J(P)).$ Since $r_J(P)$ is a finitely generated projective $\CH_J$-module, let $e^J_P\in M_{n\times n}(\CH_J)$ be a corresponding idempotent. Thus, we have arrived at $\tr(\operatorname{Tr}(e^J_P),\sigma\circ\chi_t).$ 
This is equivalent with computing the matrix trace of the family of idempotents $\sigma_t(e^J_P)\in \End_\bC[V_\sigma]$. Now $\{\sigma_t(e^J_P)\}$ is a continuous family of idempotents, and by the rigidity of the trace of an idempotent, $\operatorname{Tr}(\sigma_t(e^J_P))$ is independent of $t$.
\end{proof}

\begin{remark}

One may regard this result as the affine Hecke algebra analogue of the "Selberg principle" for reductive $p$-adic groups of \cite{BB}, see \cite[Theorem 1.6]{Da}.
\end{remark}




\subsection{} We need a decomposition of $\bar\CH^\rig_\delta$ dual to the one for $\bar R^\delta(\CH)_\rig$ from Proposition \ref{p:dim-rigid} and Corollary \ref{c:rigid-section}.

\begin{proposition}\label{p:rigid-cocenter}
Suppose the root datum for $\CH$ is semisimple and that (PDT) holds. Then
$$\bar\CH^\rig_\delta=\bigoplus_{J\in \C I^\delta_\spadesuit} \bar i_J(\bar A_J({(\bar \CH_J^\sem)}^\rig)_{N_J}).$$
\end{proposition}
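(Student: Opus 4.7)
The plan is to derive the decomposition of $\bar\CH^\rig_\d$ by dualizing the section of $\bar R^\d(\CH)_\rig$ exhibited in Corollary \ref{c:rigid-section}, using the trace pairing together with the characterization $\bar\CH^\rig_\d = \{h \in \bar\CH_\d : \tr_\d(h)(R^\d(\CH)_\twist)=0\}$ from Theorem \ref{rigid-cocenter} (which is available under hypothesis (PDT)).

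First I would verify that each summand $\bar i_J(\bar A_J((\bar \CH_J^\sem)^\rig)_{N_J})$ is contained in $\bar\CH^\rig_\d$. Pick $h_J$ in this summand; by Proposition \ref{p:A-bar}(2) applied in $\CH_J^\sem$, $h_J$ lies in the elliptic cocenter $\bar\CH_{J,\d}^{\sem,\EL}$. By Theorem \ref{rigid-cocenter} it suffices to show $\tr(\bar i_J(h_J), \phi) = 0$ for each generator $\phi = i_K(\sigma\circ \chi_t) - i_K(\sigma\circ \chi_{t'})$ of $R^\d(\CH)_\twist$. Adjointness (Lemma \ref{l:adjointtrace}) rewrites this as $\tr(h_J, r_J\phi)$, and the Mackey formula $r_J\circ i_K = \sum_w i_{J_w}^J\circ w\circ r_{K_w}^K$ of Lemma \ref{l:ind-restr-2} expands the pairing into a sum over $w\in {}^JW^K \cap W^\d$. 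Terms with $J_w\subsetneq J$ vanish because $h_J$ lies in the elliptic cocenter of $\CH_J^\sem$ and is therefore annihilated by $\bar r_{J_w}^J$. The remaining terms, where $J_w=J$ forces $wKw^{-1}=J$, transport the central twist difference from $K$ to a central twist difference in $R^\d(\CH_J^\sem)_\twist$, and pair trivially with the rigid element $h_J$ by Theorem \ref{rigid-cocenter} applied within $\CH_J^\sem$.

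For the reverse containment and directness I would dualize Corollary \ref{c:rigid-section}. The adjointness $\bar i_J\dashv r_J$ (Lemma \ref{l:adjointtrace}) combined with the factor-by-factor adjointness $\bar A_J\dashv A_J$ (from the definitions in \eqref{A-map} and \eqref{Abar-map}) converts the direct sum formula for $\bar R^\d(\CH)_\rig$ into the claimed decomposition on the dual side, which Theorem \ref{rigid-cocenter} identifies with $\bar\CH^\rig_\d$. A dimension check using Proposition \ref{p:dim-rigid} together with Proposition \ref{p:finite-elliptic} applied to each $\CH_J^\sem$ (using that (PDT) passes to parabolic subalgebras and their semisimple quotients) forces equality and confirms that the sum is direct.

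The main obstacle is reconciling $\bar\CH_J$ with $\bar\CH_J^\sem$ in the Mackey step: $r_J$ takes values in $\bar\CH_J$ rather than $\bar\CH_J^\sem$, so one must lift elements of $(\bar\CH_J^\sem)^\rig$ before applying $\bar i_J$. This is handled using the central decomposition $\CH_J\cong \CH_J^\sem\otimes \bC[X\cap (J^\vee)^\perp]$, under which $(\bar\CH_J)^\rig$ identifies with $(\bar\CH_J^\sem)^\rig$ tensored with the augmentation of the central part; the rigidity condition on $h_J$ survives this identification and is exactly what is needed to kill the diagonal Mackey terms above.
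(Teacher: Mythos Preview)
Your first step (showing each summand lands in $\bar\CH^\rig_\d$) is reasonable, and the Mackey argument you sketch is essentially how one checks such things. The lifting issue between $\bar\CH_J$ and $\bar\CH_J^\sem$ that you flag is real but manageable.

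The gap is in step 2. Your ``dualization plus dimension check'' cannot be carried out with the tools available at this point in the paper. Concretely: to turn the section of $\bar R^\d(\CH)_\rig$ from Corollary \ref{c:rigid-section} into a decomposition of $\bar\CH^\rig_\d$, you would need to know that the trace map $\bar\CH^\rig_\d \to (\bar R^\d(\CH)_\rig)^*$ is injective. But that is the Density Theorem for $\CH$ itself, which is \emph{not} part of (PDT); hypothesis (PDT) only gives density for \emph{proper} parabolic subalgebras. In the paper's logical order, density for $\CH$ is established in Section \ref{s:8} and its proof passes through Theorem \ref{t:rigid-pairing}, which in turn uses the very Proposition \ref{p:rigid-cocenter} you are trying to prove. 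So your argument is circular. The dimension check fails for the same reason: neither $\dim\bar\CH^\rig_\d$ nor $\dim(\bar\CH_J^{\sem,\EL})_{N_J}$ is known at this stage (linear independence of the $T_\co$ is the Basis Theorem, proved later; the rank of the elliptic cocenter is Proposition \ref{p:rank-H0}, whose proof also invokes Proposition \ref{p:rigid-cocenter}).

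The paper's proof avoids this circularity entirely. It does not count dimensions or dualize. Instead it runs a direct inductive filtration on the cocenter side: apply $\bar A$ to $\bar\CH^\rig_\d$ to split off the top piece $\bar\CH^{\EL}_\d$ (using $\bar A^2 = a\bar A$ from Proposition \ref{p:A-bar}), then use the explicit form of $\bar A$ together with the Bernstein--Lusztig presentation of the cocenter (Theorem \ref{BLbasis}) to see that the kernel of $\bar A|_{\bar\CH^\rig_\d}$ sits inside $\sum_{J\in\CI^\d_\spadesuit,\,J\neq\Pi}\bar i_J((\bar\CH_J^{\sem})^\rig)_{N_J}$, and iterate with the maximal proper $J$'s. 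Theorem \ref{BLbasis} is the key structural input that replaces the non-degeneracy you are missing; it lets you control the kernels directly without ever pairing against representations of $\CH$.
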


\begin{proof} The proof follows the lines of the proof of Proposition \ref{p:dim-rigid}. 

By Theorem \ref{rigid-cocenter}, $\bar\CH^{\EL}_\d\subseteq \bar\CH^{\rig}_\d$. By Proposition \ref{p:A}, $\bar A(\bar\CH^{\rig}_\d)=\bar \CH^{\EL}_\d$. Hence $\CH^{\EL}_\d=\bar A(\CH^{\EL}_\d)\subseteq \bar A(\CH^\rig_\d)\subseteq\bar A(\CH_\d)=\CH^{\EL}_\d.$ Thus $$\bar A(\bar\CH^\rig_\delta)=\bar \CH^{\EL}_\delta\subset \bar\CH^\rig_\delta.$$ 

Set $\bar p_0=\bar A \mid_{\bar\CH^\rig_\delta}$. Since $\bar A^2=a\bar A,$ $a\neq 0,$ by Proposition \ref{p:A-bar}, we have $\bar\CH^\rig_\delta=\bar A(\bar\CH^\rig_\delta)\oplus \ker\bar p_0.$

Since $\bar A=a+\sum_{J=\delta(J)\subsetneq \Pi} c_J' \bar \Delta_J$, $\ker \bar p_0\subset \sum_{J\in \CI^\delta_\spadesuit, J\neq\Pi} \bar i_J(\bar\CH_{\delta,J}).$ Recall that $\bar\CH^\rig_\delta$ is spanned by $T_\CO$, with $J_\CO=\Pi$. From Theorem \ref{BLbasis}, we see then that $$\ker\bar p_0\subseteq \sum_{J\in \CI^\delta_\spadesuit, J\neq \Pi} \bar i_J(\bar\CH^{\sem,\rig}_{\delta,J})_{N_J}.$$ Consider the projection
$$\bar p_1: \ker \bar p_0\to \bigoplus_{J\in \CI^\delta_\spadesuit, |J|=|\Pi|-1}\bar i_J(\bar A_J(\bar\CH^{\sem,\rig}_{\delta,J})_{N_J}).$$
By Theorem \ref{BLbasis}, the map is well-defined, i.e.,  the range of $\bar p_1$ is indeed a direct sum (rather than a sum). Moreover, $\bar p_1$ is surjective because the range of $\bar p_1$ is in $\bar \CH^\rig_\delta$, and it is orthogonal to $\bar\CH^{\EL}_\delta.$ Write $\ker \bar p_0=\ker\bar p_1\oplus\im \bar p_1$ and continue as in the proof of Proposition \ref{p:dim-rigid}.
\end{proof}

\

Now we state our main theorem. 

\begin{theorem}\label{t:rigid-pairing}
Suppose the root datum for $\CH$ is semisimple, the (PDT) holds\footnote{In section \ref{s:8}, the Density Theorem \ref{t:density} is proven, based on an inductive argument, and therefore, this assumption can be removed then.}, and the parameter $\bbq$ is admissible. Then 
\begin{enumerate}
\item $\tr:\bar\CH^\rig_\delta\times \bar R(\CH)_\rig^\delta\to\bC$ is a perfect pairing.
\item $\tr:\bar\CH^{\EL}_\delta\times \bar R^\delta_0(\CH)\to\bC$ is a perfect pairing.
\end{enumerate}
\end{theorem}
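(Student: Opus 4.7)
The plan is to prove (2) first, by induction on the semisimple rank of $\Phi$, and then to deduce (1) from (2) via the block-diagonal structure that the decompositions of Proposition~\ref{p:rigid-cocenter} and Corollary~\ref{c:rigid-section} impose on the trace pairing.

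For (2), I would assume inductively that (1) and (2) hold for all affine Hecke algebras of strictly smaller semisimple rank. Proposition~\ref{p:rigid-cocenter}, together with the equality $\bar A_J((\bar\CH_J^\sem)^\rig)=\bar\CH_J^{\sem,\EL}$ coming from Proposition~\ref{p:A-bar}(2), gives
\[
\bar\CH^\rig_\delta \;=\; \bar\CH^{\EL}_\delta \;\oplus\; \bigoplus_{\substack{J\in\CI^\d_\spadesuit\\ J\neq\Pi}} \bar i_J\bigl((\bar\CH_J^{\sem,\EL})_{N_J}\bigr).
\]
The inductive (2) for $\CH_J^\sem$ gives an $N_J$-equivariant perfect pairing $\bar\CH_J^{\sem,\EL}\times\bar R^\d_0(\CH_J^\sem)\to\bC$; taking $N_J$-(co)invariants and applying Proposition~\ref{elliptic}(1) yields $\dim(\bar\CH_J^{\sem,\EL})_{N_J}=|cl(\wti W_J,\d)_{\EL}/N_J|$. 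Since $\dim\bar\CH^\rig_\delta\le|cl(\wti W,\d)_0|$ by the very definition of $\bar\CH^\rig$ (spanned by $\{T_\co:J_\co=\Pi\}$) and $|cl(\wti W,\d)_0|=\sum_{J\in\CI^\d_\spadesuit}|cl(\wti W_J,\d)_{\EL}/N_J|$ by the identity~(\ref{e:HN2-dims}), the displayed decomposition forces $\dim\bar\CH^{\EL}_\delta\le|cl(\wti W,\d)_{\EL}|$. Combining with $\dim\bar R^\d_0(\CH)\le\dim\bar\CH^{\EL}_\delta$ from Proposition~\ref{p:finite-elliptic} and $\dim\bar R^\d_0(\CH)=|cl(\wti W,\d)_{\EL}|$ from Proposition~\ref{elliptic}(1), all inequalities become equalities and the pairing of Proposition~\ref{p:finite-elliptic} is perfect, which is (2).

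For (1), I would show the pairing is block-diagonal in $J\in\CI^\d_\spadesuit$ under the dual decompositions. Given $h_J\in(\bar\CH_J^{\sem,\EL})_{N_J}$ and $\sigma_{J'}\in\im(A_{J'})^{N_{J'}}$, Lemma~\ref{l:adjointtrace} and Lemma~\ref{l:ind-restr-2} express $\tr(\bar i_J(h_J),i_{J'}(\sigma_{J'}))$ as a sum over $w\in{}^JW^{J'}\cap W^\d$ of terms $\tr\bigl(h_J,\,i^J_{J_w}\circ w\circ r^{J'}_{J'_w}(\sigma_{J'})\bigr)$ with $J_w=J\cap wJ'w^{-1}$. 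Because representatives in $\CI^\d_\spadesuit$ are pairwise $\sim_\d$-inequivalent, one has $J_w\subsetneq J$ whenever $J\neq J'$, and also when $J=J'$ and $w\notin N_J$; all such terms vanish by adjunction since $\bar\CH_J^{\sem,\EL}=\bigcap_{K\subsetneq J}\ker\bar r^J_K$. The diagonal block at $J$ therefore reduces, up to the nonzero scalar $|N_J|$, to the elliptic pairing for $\CH_J^\sem$ on $N_J$-(co)invariants, which is perfect by (2) (applied inductively for $J\subsetneq\Pi$, and by Step~1 for $J=\Pi$). Together with the matching total dimension $\dim\bar R^\d(\CH)_\rig=|cl(\wti W,\d)_0|=\dim\bar\CH^\rig_\delta$ from Proposition~\ref{p:dim-rigid} and (\ref{e:HN2-dims}), this gives (1).

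The main technical obstacle is the careful verification of block-diagonality in Step~2: one must check that every term with $J_w\subsetneq J$ is annihilated by $h_J\in\bar\CH_J^{\sem,\EL}$ (using the elliptic characterisation as an intersection of kernels), identify precisely the $w$'s producing $J_w=J$ on the diagonal (which forces $w\in N_J$ via the choice of representatives in $\CI^\d_\spadesuit$), and confirm that the resulting diagonal block genuinely coincides with the elliptic pairing on $N_J$-(co)invariants. Once this Mackey-theoretic bookkeeping is in place, the rest is a clean assembly of the dimension counts of Section~\ref{s:6} with the inductive elliptic duality established in Step~1.
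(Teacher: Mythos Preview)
Your approach differs from the paper's (which proves (1) first via a rank computation and then derives (2)), and has two genuine gaps as written. First, the block-diagonality claim in Step~2 is false: $\sim_\d$-inequivalence of $J,J'\in\CI^\d_\spadesuit$ forbids $W^\d$-conjugacy but not containment, so if $J\subsetneq J'$ then $w=1$ gives $J_w=J\cap J'=J$, not $J_w\subsetneq J$. Your Mackey argument only yields block \emph{upper-triangularity} (the pairing $\tr(\bar i_J(h_J),i_{J'}(\sigma_{J'}))$ vanishes unless $J$ is conjugate-subordinate to $J'$), which is exactly what the paper proves and is enough for a rank estimate.

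Second, the dimension deduction in Step~1 silently assumes each $\bar i_J$ is injective on $(\bar\CH_J^{\sem,\EL})_{N_J}$: from the direct sum of Proposition~\ref{p:rigid-cocenter} you only know $\dim\bar i_J(\cdots)\le|cl(\wti W_J,\d)_{\EL}/N_J|$, whereas the inequality $\dim\bar\CH^{\EL}_\delta\le|cl(\wti W,\d)_{\EL}|$ needs the reverse bound on the summands. Injectivity does follow from your diagonal-block identification together with inductive (2) (if $\bar i_J(h)=0$ then $|N_J|\tr(h,\sigma)=0$ for all $\sigma$, hence $h=0$), but this means the block analysis of Step~2 must logically precede Step~1. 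The paper avoids this reordering and the induction on rank altogether: it shows each diagonal block has rank equal to $\dim\bar R_0^\d(\CH_K^\sem)^{N_K}$ using only Proposition~\ref{p:finite-elliptic} (no inductive perfectness required), sums via Proposition~\ref{p:dim-rigid} to get rank $\ge\dim\bar R^\d(\CH)_\rig=|cl(\wti W,\d)_0|$, combines with $\dim\bar\CH^\rig_\delta\le|cl(\wti W,\d)_0|$ to force (1), and then reads off (2) from the resulting equality of block dimensions.
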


\begin{proof} 
We show that the pairing $\tr$ is block upper triangular with respect to the decomposition of $\bar\CH^\rig_\delta$ from Proposition \ref{p:rigid-cocenter} and the decomposition of $\bar R^\delta(\CH)_\rig$ from Corollary \ref{c:rigid-section}. For every $J\in\C I^\delta_\spadesuit$, denote 
$$\bar\CH^\rig_\delta[J] =\bar i_J(\bar A_J({(\bar \CH_J^\sem)}^\rig)_{N_J})\text{ and }\bar R^\delta(\CH)_\rig[J]=i_J(A_J(R(\CH_J^\sem))_\bC^{N_J}).$$
Let $J,K\in \CI^\delta_\spadesuit$ be given. Then, for $h\in (\bar \CH_K^{\sem})^\rig$ and $\pi\in R(\CH_J^\sem)$,
\begin{align*}
\tr(&\bar i_K(\bar A_K(h)),i_J(A_J(\pi)))=\tr(h, A_K\circ (r_K\circ i_J)(A_J(\pi))), &\text{by Lemma \ref{l:adjointtrace}},\\
&=\sum_{w\in ^KW^J(\delta)}\tr(h, A_K\circ(i_{K_w}^K\circ w\circ r_{J_w}^J)(A_J(\pi))), &\text{by Lemma \ref{l:ind-restr-2}}.
\end{align*}
In particular, if $J\not\supseteq K$ (as elements of $\CI^\delta_\spadesuit$), then all $K_w$ are proper subsets of $K$, and therefore $A_K$ kills the induced modules. This means that $\tr:\bar\CH^\rig_\delta[K]\times \bar R(\CH)_\rig^\delta[J]\to\bC$ is identically zero if $J\not\supseteq K.$ 

On the other hand, if $J=K$, the only nonzero contribution to the trace comes from $w\in~ ^KW^K(\delta)$ in the right hand side, and we obtain
\begin{equation*}
\tr(\bar i_K(\bar A_K(h)),i_K(A_K(\pi)))=|N_K|\tr(\bar A_K(h), A_K(\pi)).
\end{equation*}
This is because $A_K(\pi)$ is chosen to be $N_K$-invariant.

The rank of this pairing equals the rank of $\tr_K^\sem:(\bar \CH_{K,\delta}^{\sem,0})_{N_K}\times \bar R^\delta_0(\CH^\sem_K)^{N_K}\to\bC$, which, by Proposition \ref{p:finite-elliptic}, equals $\dim\bar R^\delta_0(\CH^\sem_K)^{N_K}$.

Therefore,
\begin{equation}
\begin{aligned}
\text{rank}\tr(\bar\CH^\rig_\delta\times \bar R(\CH)_\rig^\delta)&=\sum_{J\in\CI^\delta_\spadesuit}\text{rank}\tr(\bar\CH^\rig_\delta[J]\times \bar R(\CH)_\rig^\delta[J])\\
&=\sum_{J\in\CI^\delta_\spadesuit}\dim\bar R^\delta_0(\CH^\sem_J)^{N_J}\\
&=\dim \bar R(\CH)^\delta_\rig.\\
\end{aligned}
\end{equation}
This implies that $\dim\bar\CH^\rig_\delta\ge \dim \bar R(\CH)^\delta_\rig$. On the other hand, from Theorem \ref{rigid-cocenter}, we know that $\dim \bar\CH^\rig_\delta \le |cl(\wti W,\delta)_0|$. When $\bbq$ is admissible, $\dim\bar R(\CH)^\delta_\rig=|cl(\wti W,\delta)_0|$, and therefore $\dim\bar\CH^\rig_\delta=\dim \bar R(\CH)^\delta_\rig=|cl(\wti W,\delta)_0|$, in this case. This proves claim (1).

In light of the inequality in Proposition \ref{p:finite-elliptic} (applied to each $J$), (1) implies that the equality of dimensions must hold for each $J$, i.e., $\dim\bar\CH^\rig_\delta[J]=\dim\bar R^\delta(\CH)_\rig[J]$. In particular, when $J=\Pi$, this is claim (2).
\end{proof}

\subsection{Examples} In this subsection, we illustrate the pairing between the rigid quotient and the rigid cocenter in some concrete examples.

\begin{example}[$SL(2)$] Let $\CH$ be the affine Hecke algebra for the root datum of $SL(2).$ This is generated by $T_0$ and $T_1$ subject to $$T_i^2=(\bbq-1)T_i+\bbq,\quad i=0,1.$$
There are three conjugacy classes of finite order in $\wti W$: $s_0$ (the affine reflection), $s_1$ (the finite reflection), and $1$. Accordingly, we have the three basis elements of $\bar\CH^\rig$: $T_0$, $T_1$, and $1$.

There are four one-dimensional modules corresponding to $T_i\in\{-1,\bbq\}.$ A basis of the elliptic space $\bar R_0(\CH)$ in the admissible case is given by the classes of $\St=(T_0=-1,T_1=-1)$ and any one of the two modules $\pi^+=(T_0=-1, T_1=q)$ or $\pi^-=(T_0=q, T_1=-1).$ Choose the class of $\pi^+$.

The third basis element of $\bar R(\CH)_\rig$ is the (tempered) principal series $i_\emptyset(1)$. The resulting $3\times 3$ table is in Table \ref{ta:sl2}.

\begin{table}
\caption{The rigid cocenter/quotient table for $SL(2)$\label{ta:sl2}}
\begin{tabular}{|c|c|c|c|}
\hline
$SL(2)$&$\St$&$\pi^+$&$i_\emptyset(1)$\\
\hline
$T_0$ &$-1$ &$-1$ &$\bbq-1$\\
\hline
$T_1$ &$-1$ &$\bbq$ &$\bbq-1$\\
\hline
$1$ &$1$ &$1$ &$2$\\
\hline
\end{tabular}
\end{table}

The determinant of this matrix is $-(\bbq+1)^2$, and therefore the matrix is invertible for all $\bbq\neq -1.$

To get a block upper-triangular matrix, one needs to replace $T_i$  by $\bar A(T_i)$, $i=0,1$, as in Theorem \ref{t:rigid-pairing}.
\end{example}

\begin{example}[$PGL(2)$] Let $\CH$ be the affine Hecke algebra for the root datum of $PGL(2).$ This is generated by $T_0,T_1,\tau$ subject to $$T_i^2=(\bbq-1)T_i+\bbq,\quad i=0,1,\quad \tau^2=1,\quad \tau\cdot T_0=T_1\cdot\tau.$$
There are three conjugacy classes of finite order in $\wti W$:  $s_1$ (the finite reflection), $\tau$, and $1$. Accordingly, we have the three basis elements of $\bar\CH^\rig$: $T_1$, $\tau$, and $1$.

There are four one-dimensional modules corresponding to $\tau\in\{1,-1\}$ and $T_1\in\{-1,\bbq\}.$ A basis of the elliptic space $\bar R_0(\CH)$ is given by the classes of $\St^\pm=(T_1=-1,\tau=\pm 1)$. 

The third basis element of $\bar R(\CH)_\rig$ is the (tempered) principal series $i_\emptyset(1)$. The resulting $3\times 3$ table is in Table \ref{ta:pgl2}.

\begin{table}
\caption{The rigid cocenter/quotient table for $PGL(2)$\label{ta:pgl2}}
\begin{tabular}{|c|c|c|c|}
\hline
$PGL(2)$&$\St^-$&$\St^+$&$i_\emptyset(1)$\\
\hline
$T_1$ &$-1$ &$-1$ &$\bbq-1$\\
\hline
$\tau$ &$1$ &$-1$ &$0$\\
\hline
$1$ &$1$ &$1$ &$2$\\
\hline
\end{tabular}
\end{table}

The determinant of this matrix is $2 (\bbq+1)$, and therefore the matrix is invertible for all $\bbq \neq -1$.
\end{example}

\begin{example}[Affine $C_2$] Let $\CH$ be the affine Hecke algebra attached to the affine diagram of type $C_2$ with three parameters
\begin{equation}\label{c2-aff}
\xymatrix{\bbq_0&\bbq_1\ar@{<=}[l]&\bbq_2\ar@{=>}[l]},
\end{equation}

It is generated by $T_i$, $i=0,1,2$, subject to $T_i^2=(\bbq_i-1)T_i+\bbq_i$ and the braid relations.

There are nine conjugacy classes of finite order in $\wti W$, including five elliptic classes. Representatives for the five elliptic classes are: $s_1s_2$, $(s_1s_2)^2$, $s_0s_2$, $s_0s_1$, and $(s_0s_1)^2$. The other four classes correspond to: $s_0$, $s_1$, $s_2$, and $1$.

A basis of the elliptic space $\bar R_0(\CH)$ in the admissible case can be constructed by lifting the five simple modules for the finite Hecke algebra $\CH_f(C_2,\bbq_1,\bbq_2)$ under the algebra homomorphism $T_0\mapsto -1$, $T_1\mapsto T_1$, $T_2\mapsto T_2.$ We label these five modules by the bipartitions which parameterized the corresponding representations of the finite Weyl group: $2\times 0$, $11\times 0$, $0\times 2$, $0\times 11$, and $1\times 1.$ For the character table of the finite Hecke algebra of type $C_2$, see \cite{GP}.

The remaining four modules needed for a basis of the rigid quotient $\bar R(\CH)_\rig$ can be chosen as the induced tempered modules: $i_{\{1\}}(\St)$, $i_{\{2\}}(\St)$, $i_{\{2\}}(\pi^+)$, and $i_\emptyset(1).$ Here $\pi^+$ is the one dimensional elliptic module for the Hecke algebra of $SL(2)=Sp(2)$ as in Table \ref{ta:sl2}. 

For this calculation, we computed the restrictions of the $9$ representations to the maximal finite Hecke subalgebras, using the Mackey formula, and then used the character table for the finite Hecke algebras of type $C_2$ and $A_1$. 
The resulting rigid character table is given, because of page limitations, in (\ref{C2}) and (\ref{C2-1}).

\begin{equation}\label{C2}
\begin{tabular}{|c|c|c|c|c|c|}
\hline
$C_2^{\mathsf{aff}}$&$2\times 0$ &$11\times 0$ &$0\times 2$ &$0\times 11$ &$1\times 1$ \\
\hline
$T_1T_2$ &$\bbq_1\bbq_2$ &$-\bbq_2$ &$-\bbq_1$ &$1$ &$0$  \\
\hline
$(T_1T_2)^2$ &$(\bbq_1\bbq_2)^2$ &$\bbq_2^2$ &$\bbq_1^2$ &$1$ &$-2\bbq_1\bbq_2$ \\
\hline
$T_0T_2$ &$-\bbq_2$ &$-\bbq_2$ &$1$ &$1$ &$1-\bbq_2$  \\
\hline
$T_0T_1$ &$-\bbq_1$ &$1$ &$-\bbq_1$ &$1$ &$1-\bbq_1$  \\
\hline
$(T_0T_1)^2$ &$\bbq_1^2$ &$1$ &$\bbq_1^2$ &$1$ &$\bbq_1^2+1$   \\
\hline
$T_0$ &$-1$ &$-1$ &$-1$ &$-1$ &$-2$ \\
\hline
$T_1$ &$\bbq_1$ &$-1$ &$\bbq_1$ &$-1$ &$\bbq_1-1$ \\
\hline
$T_2$ &$\bbq_2$ &$\bbq_2$ &$-1$ &$-1$ &$\bbq_2-1$ \\
\hline
$1$ &$1$ &$1$ &$1$ &$1$ &$2$ \\
\hline
\end{tabular}
\end{equation}

\begin{equation}\label{C2-1}
\begin{tabular}{|c|c|c|c|c|}
\hline
$C_2^{\mathsf{aff}}$ &$i_{\{1\}}(\St)$ &$i_{\{2\}}(\St)$ &$i_{\{2\}}(\pi^+)$ &$i_\emptyset(1)$ \\
\hline
$T_1T_2$  &$1-\bbq_2$ &$1-\bbq_1$ &$\bbq_2(\bbq_1-1)$ &$(\bbq_1-1)(\bbq_2-1)$ \\
\hline
$(T_1T_2)^2$ &$\bbq_2^2-2\bbq_1\bbq_2+1$ &$\bbq_1^2-2\bbq_1\bbq_2+1$ &$\bbq_1^2\bbq_2^2+\bbq_2^2-2\bbq_1\bbq_2$ &$\bbq_1^2\bbq_2^2+\bbq_2^2+\bbq_1^2+1-4\bbq_1\bbq_2$\\
\hline
$T_0T_2$ &$(\bbq_0-1)(\bbq_2-1)$ &$2-\bbq_0-\bbq_2$ &$\bbq_0\bbq_2+1-2\bbq_2$ &$2(\bbq_0-1)(\bbq_2-1)$\\
\hline
$T_0T_1$  &$1-\bbq_0$ &$1-\bbq_1$ &$1-\bbq_1$ &$(\bbq_0-1)(\bbq_1-1)$\\
\hline
$(T_0T_1)^2$  &$\bbq_0^2-2\bbq_0\bbq_1+1$ &$\bbq_1^2-2\bbq_0\bbq_1+1$ &$\bbq_1^2-2\bbq_0\bbq_1+1$ &$\bbq_0^2\bbq_1^2+\bbq_1^2+\bbq_0^2+1-4\bbq_0\bbq_1$\\
\hline
$T_0$ &$2\bbq_0-2$ &$\bbq_0-3$ &$\bbq_0-3$ &$4\bbq_0-4$\\
\hline
$T_1$  &$\bbq_1-3$ &$2\bbq_1-2$ &$2\bbq_1-2$ &$4\bbq_1-4$\\
\hline
$T_2$  &$2\bbq_2-2$ &$\bbq_2-3$ &$3\bbq_2-1$ &$4\bbq_2-4$\\
\hline
$1$  &$4$ &$4$ &$4$ &$8$\\
\hline
\end{tabular}
\end{equation}

The determinant of the $9\times 9$ rigid table equals
$$-(1+\bbq_0)^3(1+\bbq_1)^3(1+\bbq_2)^3(\bbq_0+\bbq_1)(\bbq_1+\bbq_2)(1+\bbq_0\bbq_1)(1+\bbq_1\bbq_2).$$
This implies that the determinant of the rigid table above is nonzero if and only if all the finite Hecke algebras are semisimple\footnote{For the semisimplity criterion for the finite Hecke algebra of type $B_n$, see \cite[Theorem 5.5]{DP}}.

\smallskip

\end{example}

\begin{example}[Extended affine $C_2$] The Hecke algebra attached to the extended affine diagram of type $C_2$
\begin{equation}\label{h1}
\xymatrix{\bbq_2&\bbq_1\ar@{<=}[l]&\bbq_2\ar@{=>}[l]\ar@{<->}@/_1pc/[ll]}
\end{equation}
is isomorphic to the Hecke algebra attached to the affine diagram
\begin{equation}\label{h2}
\xymatrix{1&\bbq_2\ar@{<=}[l]&\bbq_1\ar@{=>}[l]},
\end{equation}
so this is a particular case of the previous example with the appropriate specialization of the parameters. More precisely, let $T_0,T_1,T_2,\tau$ be the generators of the Hecke algebra attached to the diagram (\ref{h1}) with relations: 
\begin{align*}
&T_0^2=(\bbq_2-1)T_0+\bbq_2, &&T_1^2=(\bbq_1-1)T_1+\bbq_1, &&T_2^2=(\bbq_2-1)T_2+\bbq_2,\\
&\tau^2=1, &&\tau T_0= T_2\tau,  &&\tau T_1= T_1\tau,\\
&T_0T_1T_0T_1=T_1T_0T_1T_0, &&T_1T_2T_1T_2=T_2T_1T_2T_1, &&T_0T_2=T_2T_0, 
\end{align*}
and let $T_0',T_1',T_2'$ be the generators of the Hecke algebra attached to the diagram (\ref{h2}) with relations:
\begin{align*}
&(T_0')^2=(\bbq_1-1)T_0'+\bbq_1, &&(T_1')^2=(\bbq_2-1)T_1'+\bbq_2, &&(T_2')^2=1,\\
&T_0'T_1'T_0'T_1'=T_1'T_0'T_1'T_0', && T_1'T_2'T_1'T_2'=T_2'T_1'T_2'T_1',&&T_0'T_2'=T_2'T_0'.
\end{align*}
The isomorphism is realized by 
\begin{equation}
\tau\longleftrightarrow T_0', \quad T_0\longleftrightarrow T_0'T_1'T_0',\quad T_1\longleftrightarrow T_2',\quad T_2\longleftrightarrow T_1'.
\end{equation}
The determinant of the rigid table equals (up to a scalar independent of $\bbq_1,\bbq_2$):
$$(1+\bbq_1)^3(1+\bbq_2)^5(\bbq_1+\bbq_2)(1+\bbq_1\bbq_2).$$
\end{example}

\section{Some consequences}\label{s:8}

In this section, we prove the basis theorem, density theorem and trace Paley-Wiener theorem. They will be proven inductively, using Theorem \ref{t:rigid-pairing}.

\begin{theorem}[Basis Theorem for arbitrary parameters]\label{t:basis} $\bar \CH'$ is a free $\Lambda$-module with basis $\{T_\co\}$, where $\co$ ranges over all the conjugacy classes of $\wti W'$.
\end{theorem}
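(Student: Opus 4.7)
The approach is induction on the semisimple rank of $\Phi$, intertwined with the Density and Trace Paley-Wiener theorems of Theorem \ref{t:main-density}. The base case of rank $0$ is immediate: $\wti W=X$ is abelian, so $\bar \CH'=\Lambda[X]$ and $\{T_\co\}$ is a manifest $\Lambda$-basis. For the inductive step, assume the Basis, Density, and Trace Paley-Wiener theorems for every proper parabolic subalgebra $\CH'_J$ with $J\subsetneq\Pi$; in particular (PDT) holds, so that Theorems \ref{rigid-cocenter} and \ref{t:rigid-pairing} are available for the smaller-rank factors.

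By Theorem \ref{IMbasis}, $\{T_\co\}_{\co \in cl(\wti W')}$ already spans $\bar\CH'$ over $\Lambda$, so only $\Lambda$-linear independence remains. Suppose a nontrivial relation $\sum_\co a_\co T_\co=0$ exists with $a_\co\in\Lambda$. Because admissible values of $\bbq$ form a Zariski-dense subset of $\operatorname{Spec}\Lambda$, one can specialize to an admissible complex value $\bbq_0$ at which some $a_\co(\bbq_0)\ne 0$, producing a nontrivial $\bC$-linear relation in the specialized cocenter $\bar\CH'_{\bbq_0}$. Hence it suffices to prove $\bC$-linear independence of $\{T_\co\}$ in $\bar\CH'$ for admissible parameters.

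For admissible parameters, the argument mirrors the proof of Theorem \ref{rigid-cocenter}. Given a putative relation $\sum_\co a_\co T_\co=0$ in $\bar \CH'$, let $J\in \CI^\d_\spadesuit$ be inclusion-minimal among the sets $J_\co$ for which $a_\co\neq 0$. Pairing the identity against parabolically induced modules $i_J(\sigma\circ\chi_t)$ with $\sigma\in R^\d(\CH_J^{\sem})$ and $t\in (T^J)^\d$, and using Lemmas \ref{l:adjointtrace} and \ref{l:ind-restr-2}, one separates the resulting polynomial in $t$ into a ``rigid'' ($t$-constant) piece coming from the $\co$ with $J_\co=J$ and a piece routed through proper parabolic subalgebras of $\CH_J$. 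The inductive Density Theorem for these smaller subalgebras forces the non-rigid piece to vanish term by term. Rewriting the surviving terms via Theorem \ref{BLbasis} and \S\ref{BLbasis1} as $T_\co=\bar i_J(T^J_{\co'})$ for an elliptic conjugacy class $\co'$ of $\wti W'_J$, one obtains an element of $((\bar \CH_J^{\sem})^\rig)_{N_J}$ orthogonal to $\bar R^\d(\CH_J^{\sem})^{N_J}_\rig$.

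The final step applies Theorem \ref{t:rigid-pairing}(1) to the semisimple parabolic $\CH_J^{\sem}$, which is of strictly smaller rank and thus falls under the inductive hypothesis: the perfect pairing between rigid cocenter and rigid quotient forces the element in $((\bar \CH_J^{\sem})^\rig)_{N_J}$ to be zero, and then the inductive Basis Theorem for $\CH_J^{\sem}$ yields that every $a_\co$ with $J_\co=J$ vanishes, contradicting the choice of $J$. The main technical obstacle will be the bookkeeping of the $N_J$-coinvariant/invariant passage together with the reduction from reductive to semisimple root data alluded to in the proof of Theorem \ref{rigid-cocenter}; once those reductions are navigated, all three inductive theorems close simultaneously and the Basis Theorem follows for arbitrary parameters.
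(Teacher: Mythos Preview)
Your outline is close to the paper's own argument, and for proper $J\subsetneq\Pi$ it is essentially the same: specialize to an admissible parameter, use the induction hypothesis (PDT), and run the computation from the proof of Theorem \ref{rigid-cocenter} to kill the coefficients $a_\co$ with $J_\co=J$. The paper also invokes Proposition \ref{reduction-ss} to reduce to a semisimple root datum first, a step you only gesture at; you would need this reduction explicitly before the endgame, since Theorem \ref{t:rigid-pairing} is stated only under the semisimple hypothesis.

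The genuine gap is at the top of your induction. After clearing all $J\subsetneq\Pi$ you are left with a relation supported on classes with $J_\co=\Pi$, i.e.\ an element of $\bar\CH^\rig_\delta$ for $\CH$ itself. You propose to finish by applying Theorem \ref{t:rigid-pairing}(1) to ``the semisimple parabolic $\CH_J^{\sem}$, which is of strictly smaller rank and thus falls under the inductive hypothesis.'' But for $J=\Pi$ the algebra $\CH_\Pi^{\sem}$ has the \emph{same} semisimple rank as $\CH$, so it is not covered by your induction on rank, and the argument as written is circular. The fix is exactly what the paper does: once the root datum is semisimple and the parameter admissible, Theorem \ref{t:rigid-pairing} applies to $\CH$ \emph{directly}, because its hypothesis is (PDT) for proper parabolic subalgebras of $\CH$, and that is precisely your inductive assumption---not any statement about $\CH$ itself having smaller rank. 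The perfect pairing of Theorem \ref{t:rigid-pairing}(1) for $\CH$ then forces $\dim\bar\CH^\rig_\delta=|cl(\wti W,\delta)_0|$, so the spanning set $\{T_\co:\co\in cl(\wti W,\delta)_0\}$ is a basis and the remaining $a_\co$ vanish. With this correction (and the explicit semisimple reduction via Proposition \ref{reduction-ss}), your proof and the paper's coincide.
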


\begin{theorem}[Density Theorem for admissible parameters]\label{t:density}
When the parameter $\bbq$ is admissible, the  trace map $\tr: \bar \CH' \to R^*(\CH')_\good$ is bijective.
\end{theorem}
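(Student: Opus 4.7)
The plan is to prove Theorems \ref{t:basis} and \ref{t:density} in tandem by induction on the semisimple rank of $\Phi$, reducing at each stage to the $\delta$-twisted piece $\bar\CH_\delta$ via Proposition \ref{cocenter-decomp}. The base case is semisimple rank zero: here $\CH$ is commutative and $\CH'$ is a skew group algebra, in which the statements amount to linear independence of the characters of $T=\Hom(X,\bC^\times)$ together with classical class-function theory on a finite group. For the inductive step, assume the Density Theorem for every proper parabolic subalgebra $\CH_J'$ with $J=\delta(J)\subsetneq\Pi$. This is precisely the hypothesis (PDT), so Theorem \ref{t:rigid-pairing} is available both for $\CH$ and for each $\CH_J$.

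For injectivity, suppose $h\in\bar\CH_\delta$ satisfies $\tr(h,\pi)=0$ for every $\pi\in R^\delta(\CH)_\bC$. By Theorem \ref{IMbasis}, write $h=\sum_\CO a_\CO T_\CO$ and, for each $J\in\CI^\delta_\spadesuit$, collect $h_J=\sum_{J_\CO=J}a_\CO T_\CO$. Theorem \ref{BLbasis} together with \S\ref{BLbasis1} provides a rigid element $\tilde h_J\in(\bar\CH_J^\sem)^\rig$, well-defined modulo $N_J$-coinvariants, with $h_J=\bar i_J(\tilde h_J)$. Let $J$ be minimal with $h_J\neq 0$ and evaluate $\tr(h,i_J(\sigma\circ\chi_t))$ as $t$ varies in $(T^J)^\delta$. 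Exactly as in the proof of Theorem \ref{rigid-cocenter}, the contributions from $\CO$ with $J_\CO\neq J$ are supported on proper subtori of $T^J$ and can be separated from the top-order piece in $t$. What survives collapses, via Lemma \ref{l:adjointtrace}, the Mackey decomposition in Lemma \ref{l:ind-restr-2}, and Lemma \ref{l:w-twist}, to the identity $\tr(\tilde h_J,\sigma)=0$ for every $\sigma\in R^\delta(\CH_J^\sem)^{N_J}$. When $J\subsetneq\Pi$, Theorem \ref{t:rigid-pairing}(1) applied to the semisimple parabolic $\CH_J^\sem$ of strictly smaller rank forces $\tilde h_J=0$, a contradiction; when $J=\Pi$, the same theorem applied directly to $\CH$ gives $h_\Pi=0$. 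Hence $h=0$, which simultaneously establishes injectivity and Theorem \ref{t:basis}.

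For surjectivity onto $R^*_\delta(\CH)_\good$, proceed by a parallel induction. Given a good form $f$, restriction along $i_J$ for each proper $J=\delta(J)$ produces a good form $f_J=f\circ i_J$ on $\CH_J$ (goodness is preserved by transitivity of parabolic induction, Lemma \ref{l:ind-restr-1}); by the inductive surjectivity, $f_J=\tr(h_J,-)$ for some $h_J\in\bar\CH_{J,\delta}$. Using the BDK projector $A$ of Proposition \ref{p:A} to split $R^\delta(\CH)_\bC=\im A\oplus\ker A$, the contribution of $f$ on $\ker A=\sum_{J\subsetneq\Pi}i_J(R^\delta(\CH_J)_\bC)$ is accounted for by $\sum_J\tr(\bar i_J(h_J),-)$ after the standard inclusion-exclusion on $N_J$-orbits; the residual form descends to $\bar R^\delta_0(\CH)$ and, by the perfect pairing in Theorem \ref{t:rigid-pairing}(2), is realized by an element of $\bar\CH^{\EL}_\delta\subset\bar\CH_\delta^\rig$. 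Assembling the pieces realizes $f$ as a trace, yielding surjectivity.

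The main obstacle is the top-order-in-$t$ separation in the injectivity step: one must argue that the trace of $T_\CO$ on $i_J(\sigma\circ\chi_t)$ is strictly more regular in the $t$-variable when $J_\CO=J$ than when $J_\CO\supsetneq J$, so that varying $t$ isolates the minimal-$J$ component. This rests on the pole structure of the intertwiners entering $r_J\circ i_J$ via Mackey theory and on the Bernstein--Lusztig description of $T_\CO$ supplied by Theorem \ref{BLbasis}. Admissibility is used only through Theorem \ref{t:rigid-pairing}, via Lusztig's reduction to graded affine Hecke algebras \cite[Theorems 8.6, 9.3]{L1}; it plays no role in the inductive bookkeeping itself.
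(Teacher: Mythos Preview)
Your proposal is correct and follows essentially the same route as the paper. The injectivity argument---inducting on semisimple rank so that (PDT) holds, isolating the minimal $J$ with $h_J\neq 0$ by the regularity-in-$t$ argument from the proof of Theorem \ref{rigid-cocenter}, and then killing the surviving rigid piece via Theorem \ref{t:rigid-pairing}---is exactly the paper's approach in \S\ref{s:proofs}.

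For surjectivity you reorder the two halves: you absorb the parabolically induced part first and then realize the residual on $\bar R^\delta_0(\CH)$ via the elliptic pairing, whereas the paper (Theorem \ref{P-W1}) first matches $f$ on $\bar R^\delta_0(\CH)$ by an element of $\bar\CH^{\EL}_\delta$ and then applies $A^*$ to reduce to proper parabolics. Both amount to the identity $a\cdot\mathrm{id}=A-\sum_J c_J\, i_J\circ r_J$ read dually, so the difference is cosmetic. One remark: your phrase ``standard inclusion-exclusion on $N_J$-orbits'' is not quite the mechanism at work; what you are actually using is this BDK expansion of $A$ to see that $af+\sum_J c_J\,\tr(\bar i_J(h_J),-)=A^*(f)$ factors through $\bar R^\delta_0(\CH)$. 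Also, by invoking Theorem \ref{t:rigid-pairing}(2) you are using admissibility for surjectivity, whereas the paper obtains surjectivity for arbitrary parameters via Proposition \ref{p:finite-elliptic}; for the statement of Theorem \ref{t:density} this makes no difference.
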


The results in the previous subsections regarding $\bar R^\delta(\CH)_\rig$ and $\bar\CH^\rig_\delta$ were proved under the assumption that the root datum is semisimple and the parameters are admissible. In order to apply an inductive argument using parabolic subalgebras, we need to show that a proof in the semisimple case is sufficient to derive the general case. First we need some elements of Clifford theory.

\subsection{}\label{s:reduction}
Suppose that $A$ is an associative algebra and $\Gamma$ a group acting on $A$. Set $A'=A\rtimes \bC[\Gamma].$ The map $a\gamma\mapsto  \gamma$, $a\in A,\ \gamma\in \Gamma$ induces a surjective linear map
\begin{equation}
\tau: \bar A'\to \overline{\bC[\Gamma]}=\bigoplus_{[\gamma]\in cl(\Gamma)}\bC\gamma.
\end{equation}
The fiber $\tau^{-1}(\gamma)$ is by definition the image of $A\gamma$ in $\bar A'$. Similar to section \ref{s:d-comm}, we have:
\begin{lemma}\label{l:clifford}
The map $a\gamma\mapsto a$ induces an isomorphism $\tau^{-1}(\gamma)\cong \bar A^{[\gamma]}$, where $\bar A^{[\gamma]}=(A/[A,A]_\gamma)_{Z_\Gamma(\gamma)}$, the space of $Z_\Gamma(\gamma)$-coinvariants.
\end{lemma}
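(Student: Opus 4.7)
The plan is to build the isomorphism by writing down its inverse explicitly. Set $H = Z_\Gamma(\gamma)$ and consider the linear map $\pi_\gamma : A \to \tau^{-1}(\gamma)$, $a \mapsto [a\gamma]$; surjectivity holds by the very definition of $\tau^{-1}(\gamma)$. First I would verify that $\pi_\gamma$ factors through $\bar A^{[\gamma]} = (A/[A,A]_\gamma)_H$ using two families of commutator identities in $A'$: $[b, a\gamma] = (ba - a\gamma(b))\gamma$ shows that $\pi_\gamma$ kills $[A,A]_\gamma$; and for $h \in H$, the identity $h\gamma = \gamma h$ yields $[h, ah^{-1}\gamma] = (h(a) - a)\gamma$, killing the image of $1 - h$. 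Hence $\pi_\gamma$ descends to a surjection $\bar\pi_\gamma : \bar A^{[\gamma]} \twoheadrightarrow \tau^{-1}(\gamma)$.

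Injectivity is the harder direction, and I plan to prove it by producing a left inverse $f : \bar A' \to \bar A^{[\gamma]}$. For every $\gamma' \in [\gamma]$, I would choose once and for all an element $\sigma_{\gamma'} \in \Gamma$ with $\gamma' = \sigma_{\gamma'}\gamma\sigma_{\gamma'}^{-1}$, taking $\sigma_\gamma = 1$. Define $f$ to be identically $0$ on $A\alpha$ for $\alpha \notin [\gamma]$, and $f(a\gamma') = [\sigma_{\gamma'}^{-1}(a)]$ for $\gamma' \in [\gamma]$. Since $\sigma_{\gamma'}$ is only unique up to right multiplication by $H$, replacing $\sigma_{\gamma'}$ by $\sigma_{\gamma'} h$ would replace $\sigma_{\gamma'}^{-1}(a)$ by $h^{-1}(\sigma_{\gamma'}^{-1}(a))$; landing in the $H$-coinvariants of $A/[A,A]_\gamma$ is exactly what makes $f$ independent of this choice.

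The crux is then to show $f([A',A']) = 0$. For $x,y \in A$ and $\alpha, \beta \in \Gamma$, one has $[x\alpha, y\beta] = x\alpha(y)\alpha\beta - y\beta(x)\beta\alpha$, supported in the two (possibly equal) graded pieces $A\cdot\alpha\beta$ and $A\cdot\beta\alpha$. These either both miss $[\gamma]$ (trivial case) or both lie in $[\gamma]$; in the latter case, writing $\alpha\beta = \sigma\gamma\sigma^{-1}$ forces $\beta\alpha = (\beta\sigma)\gamma(\beta\sigma)^{-1}$, and substituting the resulting identity $\sigma^{-1}\beta^{-1} = \gamma^{-1}\sigma^{-1}\alpha$ collapses the two terms into $uv - \gamma^{-1}(v) u$ with $u = \sigma^{-1}(x)$ and $v = \sigma^{-1}\alpha(y)$. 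This is precisely $-[\gamma^{-1}(v), u]_\gamma \in [A,A]_\gamma$, and hence vanishes in $\bar A^{[\gamma]}$.

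Once $f$ is shown to descend to $\bar A'$, evaluating $f \circ \bar\pi_\gamma$ on $a \in A$ gives $[a]$ (since $\sigma_\gamma = 1$), so $\bar\pi_\gamma$ is injective and the lemma follows. The main obstacle I expect is the bookkeeping in the previous paragraph: the two summands of a single commutator generically lie in distinct graded components of $A'$, and the $H$-coinvariance of the target is engineered precisely to identify them modulo a $\gamma$-commutator once one chooses coset representatives $\sigma_{\gamma'}$ consistently. This is also the step where the cyclic case treated in \cite[Proposition 3.1.1]{CH} becomes genuinely more subtle for general $\Gamma$, since for noncyclic $\Gamma$ the commutators do not respect the $\Gamma$-grading but only the coarser conjugacy-class grading.
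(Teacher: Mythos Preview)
Your proposal is correct and essentially carries out what the paper only gestures at: the paper gives no proof, merely writing ``Similar to section~\ref{s:d-comm}'' (i.e., to \cite[Proposition 3.1.1]{CH}, the cyclic case). Your argument is the natural extension of that cyclic argument to arbitrary $\Gamma$, and the bookkeeping with the coset representatives $\sigma_{\gamma'}$ and the $H$-coinvariants is exactly the new ingredient needed; the key identity $uv - \gamma^{-1}(v)u = -[\gamma^{-1}(v),u]_\gamma$ is verified correctly.
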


\begin{proposition}\label{reduction-ss}
Assume the root datum is semisimple and that $\bbq$ is admissible. If the statement of Theorem \ref{t:basis} holds when $\CH'$ is semisimple and $\bbq$ is admissible, then it holds in general.
\end{proposition}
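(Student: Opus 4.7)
\emph{Proof plan.}
The strategy is to peel off the $W$-invariant direction of the character lattice. Set $Z_0 = X^W$; this is a saturated free abelian subgroup of $X$ (so $X/Z_0$ is torsion-free), and the root datum is semisimple precisely when $Z_0 = 0$. Since $Z_0$ lies in the center of $\wti W$, the elements $\theta_z$ ($z \in Z_0$) form a central $\Gamma$-stable $\Lambda$-subalgebra $\Lambda[Z_0] \subset \CH$, where $\Gamma = \langle \delta \rangle$.

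I would first introduce the $W$-stable sublattice $X^\flat = Z_0 \oplus (X \cap \bQ R) \subset X$, which has finite index in $X$. Since the Bernstein--Lusztig commutation $\theta_x T_s - T_s \theta_{s(x)}$ depends only on the image of $x$ in $X/Z_0$, the Hecke algebra attached to $X^\flat$ factors cleanly as
$$\CH(\Phi^\flat)' \cong \CH(\Phi_{ss})' \otimes_\Lambda \Lambda[Z_0],$$
where $\Phi_{ss} = (X \cap \bQ R, R, X^\vee \cap \bQ R^\vee, R^\vee, \Pi)$ is a semisimple root datum and $\Gamma$ acts diagonally. Passing to cocenters, centrality of $\Lambda[Z_0]$ yields $\overline{\CH(\Phi^\flat)'} \cong \overline{\CH(\Phi_{ss})'} \otimes_\Lambda \overline{\Lambda[Z_0]\rtimes\Gamma}$. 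Combinatorially, since $Z_0$ is central in $\wti W$, the conjugacy classes of $\wti W^\flat \rtimes \Gamma$ biject with pairs $(\bar z, \co^{ss})$ consisting of a $\Gamma$-orbit $\bar z$ in $Z_0$ together with a suitable conjugacy class $\co^{ss}$ in the extended semisimple affine Weyl group. A minimal length representative of $\co$ has the form $t_z w^{ss}$ with $\ell(t_z) = 0$ (because $z \in X^W$ pairs to zero with every coroot), whence $T_\co = \theta_z \cdot T_{\co^{ss}}$.

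Applying the hypothesis that $\{T_{\co^{ss}}\}$ is a $\Lambda$-basis of $\overline{\CH(\Phi_{ss})'}$ then produces a $\Lambda$-basis of $\overline{\CH(\Phi^\flat)'}$. To lift this to the ambient $\CH'$ --- which contains $\CH(\Phi^\flat)'$ as the fixed subalgebra under the natural rescaling action of the finite dual group $\widehat{X/X^\flat}$, i.e.\ $\CH(\Phi^\flat)' = (\CH')^{\widehat{X/X^\flat}}$ --- I would apply the Clifford-type decomposition of Lemma \ref{l:clifford}, which relates cocenters and (twisted) conjugacy classes across this finite cover. Combined with the spanning statement of Theorem \ref{IMbasis}, this produces the desired $\Lambda$-basis $\{T_\co : \co \in cl(\wti W')\}$ of $\overline{\CH'}$. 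The main obstacle is this last step: one must carefully verify that the Clifford-theoretic bijection between conjugacy classes on the two sides of the $\widehat{X/X^\flat}$-cover matches the natural parametrization by $cl(\wti W')$, so that the resulting basis coincides with the elements predicted by Theorem \ref{IMbasis}.
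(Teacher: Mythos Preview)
Your approach has a genuine gap in the final lifting step. You propose to pass from $\CH(\Phi^\flat)'$ back to $\CH'$ via the finite dual group $\widehat{X/X^\flat}$, invoking Lemma \ref{l:clifford}. But that lemma is about crossed products $A\rtimes\Gamma$, whereas $\CH(\Phi^\flat)'=(\CH')^{\widehat{X/X^\flat}}$ is a \emph{fixed-point subalgebra}, not a crossed-product factor. Concretely, the $X/X^\flat$-grading on $\CH'$ induces a decomposition $\overline{\CH'}=\bigoplus_c \overline{\CH'}_c$, and the natural map $\overline{\CH(\Phi^\flat)'}\to\overline{\CH'}_0$ is only a surjection: the degree-$0$ commutators in $\CH'$ include terms of the form $[A_c,A_{-c}]$ for $c\neq 0$, which are not commutators in $\CH(\Phi^\flat)'$. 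So linear independence of the $T_{\CO'}$ in $\overline{\CH(\Phi^\flat)'}$ does not transfer to $\overline{\CH'}$. (There is also a minor issue: the pairing between $X\cap\bQ R$ and $X^\vee\cap\bQ R^\vee$ need not be perfect, so your $\Phi_{ss}$ is not literally a root datum.)

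The paper avoids this entirely by going in the opposite direction. Instead of passing to a \emph{sublattice} and then trying to climb back up, it observes that $\CH'$ is \emph{already} a crossed product: writing $\wti W'=W^a\rtimes\wti\Gamma$ with $\wti\Gamma=\Omega\rtimes\Gamma$, one has $\CH'=\CH_a\rtimes\bC[\wti\Gamma]$, where $\CH_a$ is the Hecke algebra of the non-extended affine Weyl group $W^a$. Now Lemma \ref{l:clifford} applies directly, and the cocenter breaks into pieces indexed by conjugacy classes $[\gamma]\in cl(\wti\Gamma)$. For each $\gamma$, the $\gamma$-twisted cocenter of $\CH_a$ depends only on the image $\underline\gamma$ of $\gamma$ in $\operatorname{Aut}(W^a,S^a)$, so one reduces to the extended algebra $\CH_a\rtimes\bC[\underline{\wti\Gamma}]$ where $\underline{\wti\Gamma}$ is the (finite) image of $Z_{\wti\Gamma}(\gamma)$. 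Since $\CH_a$ has root lattice $Q$ as its character lattice, its root datum is semisimple, and the hypothesis applies. The point is that the ``extra'' lattice directions you tried to peel off as $Z_0$ are instead absorbed into the (possibly infinite) group $\wti\Gamma$, where Clifford theory handles them uniformly.
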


\begin{proof}
Define the linear map $$\pi: \bigoplus_{\co\in cl(\wti W')}\Lambda\to \bar\CH',\ (a_\co)\mapsto \sum a_\co T_\co.$$
By Theorem \ref{IMbasis}, this map is surjective. We need to show also that $\ker\pi=0.$ For this, it is sufficient to show that $\ker\pi\otimes_\Lambda \bC_q=0$ for generic parameters $q.$ Fix now an admissible $q$ and specialize $\CH'_q=\CH'\otimes_\Lambda\bC_q.$ We prove that $\{T_\co: \co\in cl(\wti W')\}$ are $\bC$-linear independent in $\bar\CH'_q$.

Let $\pi^q:\bigoplus_{\co\in cl(\wti W')}\bC\to \bar\CH'_q$ denote the map induced by $\pi$ after specialization.

Rewrite $\wti W'=\wti W\rtimes \Gamma$ as $\wti W'=W^a\rtimes \wti \Gamma,$ where $\wti\Gamma=\Omega\rtimes\Gamma.$ For every $\co\in cl(\wti W')$, there exists a {unique} $[\gamma]\in cl(\wti\Gamma)$ such that $\co\cap W^a \gamma\neq \emptyset.$ By Lemma \ref{l:clifford}, it is sufficient to prove that for a fixed $[\gamma]\in cl(\wti\Gamma)$, the set  $$\{T_\co: \co\in cl(\wti W'), \co\cap W^a\gamma\neq\emptyset\}$$ is linearly independent. 

Denote $\underline{\wti\Gamma}$ the image of $Z_\Gamma(\gamma)$ in $\operatorname{Aut}(W^a,S^a)$, and by $\underline\gamma$ the image of $\gamma.$ Moreover, set $\underline{\wti W}'=W^a\rtimes \underline{\wti\Gamma}$ and $\underline{\wti\CH}'=\CH_a\rtimes \bC[\underline{\wti \Gamma}].$ Here $\CH_a$ is the affine Hecke algebra corresponding to $W^a$. Notice that $\underline{\wti\CH}'$ is an affine Hecke algebra attached to a semisimple root datum and extended by a group of automorphisms, therefore the hypothesis of the Proposition applies to it.

Let $$\underline\pi^q:\bigoplus_{\underline\CO\in cl(\underline{\wti W}')}\bC\to \overline{\underline{\wti\CH}}'_q,\quad (a_{\underline\CO})\mapsto \sum a_{\underline\CO}T_{\underline\CO},$$
be the cocenter map. By the assumption in the Proposition, $\underline\pi^q$ is bijective. Consider its restriction to the $\gamma$-fiber:
\begin{equation}\label{gamma-fiber}
\underline{\pi}^q_{[\underline\gamma]}:\bigoplus_{\underline\CO\in cl(\underline{\wti W}'): \underline\CO\cap W^a\underline\gamma\neq\emptyset}\bC\to \text{Im}(\CH_{a,q}\underline\gamma\to \overline{\underline{\wti\CH}}'_q )\cong \overline\CH_{a,q}^{[\underline\gamma]},
\end{equation}
where the last isomorphism is by Lemma \ref{l:clifford}. Since $\underline\pi^q$ is injective, so is $\underline{\pi}^q_{[\underline\gamma]}$.

There is a natural bijection:
\begin{equation}
\begin{aligned}
\kappa_\gamma:\{\co:\co\in cl(\wti W'), \co\cap W^a\gamma\neq\emptyset\}&\longrightarrow \{\underline\co: \underline\co\in cl(\underline{\wti W}'), \underline\co\cap W^a\underline\gamma\neq\emptyset\}\\
[w\gamma]&\longrightarrow   [w\underline\gamma], \quad w\in W^a.\\
\end{aligned}
\end{equation}

Similar to (\ref{gamma-fiber}), define
\begin{equation}
\pi^q_{[\gamma]}:\bigoplus_{\CO\in cl({\wti W}'):\CO\cap W^a\gamma\neq\emptyset}\bC\to \text{Im}(\CH_{a,q}\gamma\to \overline{{\CH}}'_q )\cong \overline\CH_{a,q}^{[\gamma]}.
\end{equation}
We wish to show that $\pi^q_{[\gamma]}$ is also injective, and this would complete the proof. We have the commutative diagram:
\begin{equation}\displaystyle{
\xymatrix{\bigoplus_{\CO\in cl({\wti W}'):\CO\cap W^a\gamma\neq\emptyset}\bC \ar[d]_{\pi^q_{[\gamma]}} \ar[r]^{\kappa_\gamma} &  \bigoplus_{\underline\CO\in cl(\underline{\wti W}'): \underline\CO\cap W^a\underline\gamma\neq\emptyset}\bC \ar[d]^{\underline{\pi}^q_{[\underline\gamma]}}\\
\overline\CH_{a,q}^{[\gamma]} \ar[r]^{\cong} &\overline\CH_{a,q}^{[\underline\gamma]}}
}
\end{equation}
Since $\underline{\pi}^q_{[\underline\gamma]}$ is injective and $\kappa_\gamma$ is bijective, it follows that $\pi^q_{[\gamma]}$ is injective.

\end{proof}

\subsection{Proofs of Theorems \ref{t:basis} and \ref{t:density}}\label{s:proofs} In light of Proposition \ref{reduction-ss}, it is sufficient to prove the theorems under the assumption that the root datum is semisimple. Assume that the indeterminate $\bbq$ is specialized to an admissible parameter $q$. By induction, we may assume that both the basis and density theorems hold for all proper parabolic subalgebras.  

Suppose $\sum_\co a_\co T_\co \in \ker \tr$. We claim that by induction, $\sum_\co a_\co T_\co  \in \bar\CH^{\rig}_\d$. Indeed, by the proof of Theorem \ref{rigid-cocenter} and the (PDT) assumption, $\sum_{J_\co=J} a_\co T_\co=0$ for all $J \subsetneqq \Pi$. By the inductive assumption on proper parabolic subalgebras, $a_\co=0$ for all $\co$ with $J_\co \neq J$. 

Thus $\sum_{\co \in cl(\wti W, \d)_0} a_\co T_\co \in \ker \tr$. Note that $\bar\CH^\rig_\d$ is spanned by $T_\co$ for $\co \in cl(\wti W, \d)_0$ and $\dim \bar R^\d_0(\CH)_\bC=|cl(\wti W, \d)_0|$. By Theorem \ref{t:rigid-pairing}, $T_\co$ for $\co \in cl(\wti W, \d)_0$ forms a basis of $\bar\CH^\rig_\d$. Hence $a_\co=0$ for $\co \in cl(\wti W, \d)$. This concludes the proof when $\bbq$ is admissible.

\smallskip

Finally, since Theorem \ref{t:basis} holds for generic parameters, it holds for the indeterminate $\bbq$ as well.
\qed

\subsection{}\label{8.3}
Since $\bar \CH_\d$ is a $\Lambda$-module with basis $T_\co$, it is easy to see that the induction and restriction functors $\bar i_J$ and $\bar r_J$ depend algebraically on the parameters, therefore Lemma \ref{l:ind-restr-bar} and hence Proposition \ref{p:A-bar} and Proposition \ref{p:finite-elliptic} hold unconditionally. Moreover, $\bar \CH^{\EL}_\d \subset \bar \CH^\rig_\d$ and Proposition \ref{p:rigid-cocenter} holds without the (PDT) assumption. 

We can now prove the trace Paley-Wiener theorem for arbitrary parameters.

\begin{theorem}\label{P-W1}
For arbitrary parameters, the image of the map $\tr: \bar \CH' \to R(\CH')$ is  $R^*(\CH')_\good.$ In other words, $R^*_\delta(\CH)_{\tr}=R^*_\delta(\CH)_\good.$
\end{theorem}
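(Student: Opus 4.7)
The inclusion $R^*_\delta(\CH)_\tr \subseteq R^*_\delta(\CH)_\good$ is immediate: for any $h \in \bar\CH_\delta$, the value $\tr(h,X(J,\sigma,t))$ is a polynomial (hence regular) function of the induction parameter $t \in (T^J)^\delta$, by the standard formulas for parabolic induction.

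For the reverse inclusion, I argue by induction on $|\Pi|$, assuming the trace Paley-Wiener theorem for all proper $\delta$-stable parabolic subalgebras. By Proposition \ref{cocenter-decomp} together with a Clifford-theoretic reduction modelled on Proposition \ref{reduction-ss}, it is enough to show surjectivity of $\tr : \bar\CH_\delta \to R^*_\delta(\CH)_\good$ when the root datum is semisimple. Given $f \in R^*_\delta(\CH)_\good$, Proposition \ref{p:A} provides $A^2=aA$ with $a\ne 0$, hence the direct sum
\[
R^\delta(\CH)_\bC = R^\delta(\CH)_{\Ind} \oplus \im A_\bC,\qquad R^\delta(\CH)_{\Ind}:=\sum_{J=\delta(J)\subsetneq \Pi} i_J(R^\delta(\CH_J)_\bC),
\]
where $\im A_\bC$ projects isomorphically onto the elliptic quotient $\bar R^\delta_0(\CH)$. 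Dually, write $f = f_\el + f_\Ind$ with $f_\el|_{R^\delta(\CH)_{\Ind}}=0$ and $f_\Ind|_{\im A_\bC}=0$. Both summands are good: $f_\el$ annihilates all properly induced families $X(J,\sigma,t)$, so $f_\Ind = f - f_\el$ is also good. The elliptic piece $f_\el$ factors through $(\bar R^\delta_0(\CH))^*$, and the surjectivity of $\tr : \bar\CH^\EL_\delta \to (\bar R^\delta_0(\CH))^*$ provided by Proposition \ref{p:finite-elliptic} (unconditional after \S\ref{8.3}) produces $h_\el \in \bar\CH^\EL_\delta$ with $\tr(h_\el) = f_\el$.

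It remains to realize $f_\Ind$ as a trace, which I expect to be the main obstacle. For each $J \in \CI^\delta_\spadesuit$ with $J \subsetneq \Pi$, the pullback $f \circ i_J \in R^*_\delta(\CH_J)$ is good by transitivity of induction (Lemma \ref{l:ind-restr-1}), and by the inductive hypothesis equals $\tr_J(h_J)$ for some $h_J \in \bar\CH_{J,\delta}$. The identities $(f \circ i_J) \circ i_K^J = f \circ i_K$ combined with Lemma \ref{l:adjointtrace} yield the compatibility $\bar r_K^J(h_J) \equiv h_K$ modulo $\ker \tr_K$ for all $K \subset J \subsetneq \Pi$. My plan is to glue these local data into a single $h_\Ind \in \bar\CH_\delta$ by inclusion-exclusion over the poset of proper $\delta$-stable parabolic subsets, setting
\[
h_\Ind = \sum_{J \in \CI^\delta_\spadesuit,\, J\subsetneq \Pi} \mu_J\, \bar i_J(h_J)
\]
with rational coefficients $\mu_J$ chosen so that when $\tr(h_\Ind,\, i_K(\sigma))$ is expanded via Lemma \ref{l:adjointtrace} and the cocenter Mackey formula of Lemma \ref{l:ind-restr-bar}(ii), the contributions from all double cosets telescope to $\tr_K(h_K,\sigma)$. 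Verifying this combinatorial identity, which precisely encodes how Möbius inversion interacts with the $N_J$-stabilizers appearing in the Mackey sum, is the principal technical step; granting it, $h := h_\el + h_\Ind$ is the desired trace-form preimage of $f$, completing the induction.
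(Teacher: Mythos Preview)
Your overall architecture matches the paper's: handle the elliptic piece first via surjectivity of $\tr:\bar\CH^{\EL}_\delta\to(\bar R^\delta_0(\CH))^*$, then reduce the remainder to proper parabolics by induction. The essential difference, and the place where your argument is incomplete, is the treatment of the induced part $f_{\Ind}$.

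You propose to glue the local lifts $h_J$ by an inclusion--exclusion formula $h_{\Ind}=\sum_J\mu_J\,\bar i_J(h_J)$ and then verify a combinatorial telescoping identity involving the Mackey decomposition and the stabilizers $N_J$. You explicitly leave this identity unproven (``granting it''), and there is a further wrinkle you do not address: your construction at best forces $\tr(h_{\Ind})$ to agree with $f$ on $R^\delta(\CH)_{\Ind}$, not on $\im A_\bC$, so $\tr(h_{\Ind})$ need not equal $f_{\Ind}$. (This can be repaired by re-absorbing the discrepancy into the elliptic piece, but you do not say so.)

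The paper sidesteps both issues by using the adjoint operator $A^*$ directly. Once you have subtracted a trace form so that $f$ vanishes on $\bar R^\delta_0(\CH)$, you get $A^*(f)=0$; expanding $A=a\cdot\mathrm{Id}+\sum_{J\subsetneq\Pi}c'_J\, i_J\circ r_J$ this reads
\[
f=-\sum_{J=\delta(J)\subsetneq\Pi}\frac{c'_J}{a}\; r_J^*\bigl(i_J^*(f)\bigr).
\]
Now $i_J^*(f)=f\circ i_J$ is good on $\CH_J$, hence by the inductive hypothesis equals $\tr_J(h_J)$; and by Lemma~\ref{l:adjointtrace}, $r_J^*(\tr_J(h_J))=\tr(\bar i_J(h_J))$. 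So $f$ is visibly a trace form. In other words, the M\"obius coefficients you were seeking are precisely $\mu_J=-c'_J/a$, supplied automatically by the expansion of $A$, and no separate combinatorial verification is needed.

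A minor point: the reduction to the semisimple root datum via Proposition~\ref{reduction-ss} is not how the paper proceeds here. Instead, step~(a) (elliptic surjectivity) is deduced from Proposition~\ref{p:finite-elliptic} for $\CH^{\sem}$ together with the observation that the fibres of $\bar\CH_\delta\to\overline{(\CH^{\sem})}_\delta$ are of the form $\bigoplus_{\lambda\in(\Pi^\vee)^\perp/\langle\delta\rangle}\bC\,\theta_\lambda h$, which supplies the regular dependence on $t\in T_\Pi^\delta$.
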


\begin{proof}
We first show that \[\tag{a} \tr:\bar\CH^{\EL}_\d\to \bar R_{0}^\d(\CH)^* \text{ is surjective.} \]

By Proposition \ref{p:A}, we may regard $\bar R_0^\d(\CH)$ as $A(R^\d(\CH))\subset R^\d(\CH).$ Under this identification, $\bar R_0^\d(\CH)=\{\s \circ \chi_t; \s \in \bar R_0^\d(\CH^{\sem}), t \in T_\Pi^\d\}$. On the other hand, the natural projection map $\bar \CH_\d \to \bar (\CH^{\sem})_\d$ is surjective and each fiber is isomorphic to $\oplus_{\lambda \in (\Pi^\vee)^\perp/\<\d\>} \bC \theta_\lambda h$ for some $h \in \bar \CH_\d$. Note that $\oplus_{\lambda \in (\Pi^\vee)^\perp/\<\d\>} \bC \theta_\lambda$ is naturally isomoprhic to the set of regular functions on $T_\Pi^\d$. Thus (a) follows from Proposition \ref{p:finite-elliptic} and the remark above for $\CH^\sem$. 


The rest of the argument is as in \cite[\S4]{BDK}. Let $f\in R^*_\d(\CH)_\good$ be given. By (a), there exists $h\in \bar\CH^{\EL}_\d$ such that $f'_h=f$ on $\bar R_0^\d(\CH),$ where $f'_h:=\tr(h)\in R^*_\d(\CH)_{\tr}.$ Modifying $f$ by $f-f'_h,$ we may assume that $f(\bar R_0^\d(\CH))=0.$

Consider the adjoint operator $$A^*=a+\sum_{J=\delta(J)\subsetneq \Pi} c_J' i_J^* \circ r_J^*: R^*_\delta(\CH)\to R^*_\delta(\CH).$$ Then $A^*(f)(R^\d(\CH))=f(\bar R_0^\d(\CH))=0$. So
$f=-\sum_{J=\delta(J)\subsetneq \Pi}c'_J/a ~ i_J^* \circ r_J^*(f).$
It is immediate that $i_J^*(R^*_\delta(\CH)_\good)\subset R^*_\delta(\CH_J)_\good$ and $r_J^*(R^*_\delta(\CH_J)_{\tr})\subset R^*_\delta(\CH)_{\tr}$, thus the claim follows by induction on $J$.
\end{proof}

\subsection{} We record one more finiteness result for arbitrary parameters.

\begin{proposition}\label{p:rank-H0}
Suppose that $\Phi$ is a semisimple based root datum. Then $$\dim \bar R^\delta_0(\CH) \le \rank \bar \CH^{\EL}_\d=|cl(\wti W, \d)_{\EL}|.$$ 
\end{proposition}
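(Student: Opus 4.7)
My plan is to combine the direct-sum decomposition of the rigid cocenter from Proposition~\ref{p:rigid-cocenter} (which, together with $\bar\CH^{\EL}_\d\subset\bar\CH^\rig_\d$, holds unconditionally by \S\ref{8.3}) with the freeness of $\bar\CH^\rig_\d$ provided by Theorem~\ref{t:basis}. Proposition~\ref{p:rigid-cocenter} gives
$$\bar\CH^\rig_\d = \bar\CH^{\EL}_\d \oplus N, \qquad N = \bigoplus_{J\in\CI^\d_\spadesuit,\ J\neq\Pi} \bar i_J(\bar A_J((\bar\CH_J^\sem)^\rig)_{N_J})$$
as $\Lambda$-modules, with the $J=\Pi$ summand identified with $\bar A(\bar\CH^\rig_\d) = \im\bar A = \bar\CH^{\EL}_\d$ via Proposition~\ref{p:A-bar}.

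Next I would compute ranks over $\Lambda$. By Theorem~\ref{t:basis}, $\bar\CH^\rig_\d$ is a free $\Lambda$-module of rank $|cl(\wti W,\d)_0|$, so its fiber dimension is constant in the parameter. For any admissible $\bbq$, Theorem~\ref{t:rigid-pairing}(2) (whose (PDT) hypothesis is removable once Theorem~\ref{t:density} is in hand) gives a perfect trace pairing $\bar\CH^{\EL}_\d \times \bar R^\d_0(\CH)\to\bC$, and Proposition~\ref{elliptic}(1) identifies the common dimension as $|cl(\wti W,\d)_{\EL}|$. Since generic parameters are admissible, this computes the generic fiber dimension of the $\Lambda$-module $\bar\CH^{\EL}_\d$, hence its $\Lambda$-rank:
$$\rank_\Lambda\bar\CH^{\EL}_\d = |cl(\wti W,\d)_{\EL}|, \qquad \rank_\Lambda N = |cl(\wti W,\d)_0| - |cl(\wti W,\d)_{\EL}|.$$

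Finally, at an arbitrary parameter $q$, upper semicontinuity of fiber dimension for finitely generated $\Lambda$-modules gives $\dim_\bC N_q \ge \rank_\Lambda N$, while the $\Lambda$-direct sum specializes to a direct sum of vector spaces, so
$$\dim_\bC (\bar\CH^{\EL}_\d)_q = |cl(\wti W,\d)_0| - \dim_\bC N_q \le |cl(\wti W,\d)_{\EL}|.$$
Combined with Proposition~\ref{p:finite-elliptic} (unconditional by \S\ref{8.3}) this yields $\dim\bar R^\d_0(\CH)\le|cl(\wti W,\d)_{\EL}|$. The delicate point will be confirming that the left-hand side above agrees with the intrinsic elliptic cocenter at $q$; here one uses $\bar\CH^{\EL}_\d = \im\bar A$ together with the fact that formation of images (unlike kernels) commutes with specialization, so that $(\im\bar A)_q = \im\bar A_q$, which by the specialized form of Proposition~\ref{p:A-bar}(2) coincides with the intrinsic elliptic cocenter at $q$.
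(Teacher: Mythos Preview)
Your overall structure is sound and close in spirit to the paper's, but the justification of your ``delicate point'' contains a slip. The slogan ``formation of images (unlike kernels) commutes with specialization'' is false in general: tensoring the inclusion $\im\bar A\hookrightarrow\bar\CH_\d$ with $\bC_q$ only yields a map $(\im\bar A)_q\to\bar\CH_{q,\d}$ whose image is $\im\bar A_q$, so one obtains merely a surjection $(\im\bar A)_q\twoheadrightarrow\im\bar A_q$. That surjection already suffices for the inequality $\dim\bar R^\d_0(\CH)\le|cl(\wti W,\d)_{\EL}|$, since Proposition~\ref{p:finite-elliptic} bounds $\dim\bar R^\d_0(\CH)$ by the intrinsic $\dim\im\bar A_q\le\dim(\im\bar A)_q$. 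To obtain the \emph{equality} $\dim\bar\CH^{\EL}_{\d,q}=|cl(\wti W,\d)_{\EL}|$ that the proposition also asserts, observe instead that your $\Lambda$-direct-sum decomposition makes $\im\bar A$ a direct summand of the free $\Lambda$-module $\bar\CH^\rig_\d$; hence the inclusion is split, $(\im\bar A)_q\hookrightarrow\bar\CH_{q,\d}$ remains injective, and indeed $(\im\bar A)_q=\im\bar A_q$---but for this reason, not the one you gave. (The same splitting makes $N$ finitely generated projective over the domain $\Lambda$, so in fact $\dim N_q$ is constant and your $\ge$ is an equality.)

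The paper arranges the semicontinuity slightly differently. Rather than working with the complement $N$, it uses directly that the rank of the operator $\bar A$ (a matrix in the basis $\{T_\co\}$ with entries in $\Lambda$) is lower semicontinuous in $q$, giving $\dim\im\bar A_q\le|cl(\wti W,\d)_{\EL}|$ at every parameter. The equality is then forced by a squeeze over all parabolics: Proposition~\ref{p:rigid-cocenter} at $q$ together with~(\ref{e:HN2-dims}) gives
\[
|cl(\wti W,\d)_0|=\dim\bar\CH^\rig_{\d,q}=\sum_{J\in\CI^\d_\spadesuit}\dim(\text{$J$-piece})\le\sum_{J}|cl(\wti W_J,\d)_{\EL}/N_J|=|cl(\wti W,\d)_0|,
\]
so every inequality is an equality, in particular for $J=\Pi$. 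Your route effectively replaces this parabolic squeeze by the projectivity/splitting observation, which is a clean alternative once it is stated correctly.
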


\begin{proof}
By Proposition \ref{p:finite-elliptic} and \S \ref{8.3}, $\dim \bar R^\d_0(\CH)\le \dim \bar\CH^{\EL}_\d$ for arbitrary parameters. Since (PDT) holds for admissible parameters,  $\rank \bar \CH^{\EL}_\d=\rank \bar \CH_\d/\ker \bar A$ is finite for admissible parameters. Since $\bar \CH_\d$ is a $\Lambda$-module with basis $T_\co$ and the map $\bar A$ depends algebraically on the parameters, $\rank \bar \CH_\d/\ker \bar A$ is semi-continuous, i.e., the rank at any parameter is less than or equal to the rank at generic parameters. By Theorem \ref{t:rigid-pairing} (2) and Proposition \ref{elliptic} (2), $\rank \bar \CH^{\EL}_\d \le |cl(\wti W, \d)_{\EL}|$. 

On the other hand, by Proposition \ref{p:rigid-cocenter}, $$\rank \bar\CH^\rig_\delta=\sum_{J\in \C I^\delta_\spadesuit} \rank \overline{(\CH_J^\sem)}^0_\d/N_J.$$ For each $J$, the $N_J$ action depends algebraically on the parameters, hence $$\rank \overline{(\CH_J^\sem)}^0_\d/N_J \le |cl(\wti W_J, \d)_{\EL}/N_J|.$$ By \S\ref{e:HN2-dims}, $$|cl(\wti W, \d)_0|=\rank \bar\CH^\rig_\delta=\sum_{J\in \C I^\delta_\spadesuit} \rank \overline{(\CH_J^\sem)}^0_\d/N_J \le \sum_{J\in \C I^\delta_\spadesuit} |cl(\wti W_J, \d)_{\EL}/N_J|=|cl(\wti W, \d)_0|.$$ Thus $\rank \overline{(\CH_J^\sem)}^0_\d/N_J=|cl(\wti W_J, \d)_{\EL}/N_J|$ for all $J$. In particular, $\rank \bar \CH^{\EL}_\d=|cl(\wti W, \d)_{\EL}|$. 
\end{proof}

\begin{remark}
If the root datum $\Phi$ is not semisimple, apply the constructions from section \ref{s:induction} with $J=\Pi.$ Set $Z(\Phi)=X/X\cap\bQ\Pi$ and $\widehat {Z(\Phi)}=T^\Pi=\Hom_\bZ(Z(\Phi),\bC^\times).$ For every $t\in \widehat{Z(\Phi)}$, let $\chi_t: \CH\to\CH^\sem$ be the algebra homomorphism (\ref{chi-t}). These homomorphisms define a right $\widehat{Z(\Phi)}$-action on $\Irr^\delta\CH$ and on $\Theta(\CH)$, which preserves the elliptic parts $\Irr^\delta_0\CH$ and $\Theta(\CH)_0.$ Then Proposition \ref{p:rank-H0} implies that the sets
$\Theta^\delta(\CH)_0$ and $\Irr^\delta_0(\CH)$ are finite unions of $\widehat{Z(\Phi)}$-orbits. \end{remark}

\section{Applications to smooth representations of reductive $p$-adic groups}\label{s:unitary}

\subsection{} Let $k$ be a $p$-adic field of characteristic $0$ with ring of integers $\fk o$ and residue field of cardinality $q$. Let $\C G$ be a connected semisimple group over $k$, and set $G=\C G(k).$ Let $G^0$ be the subgroup of $G$ generated by all of the compact open subgroups. A (one-dimensional) smooth character of $G$ is called unramified if $\chi|_{G^0}=1.$ 
Let $X_u(G)$ denote the group of unramified characters of $G$. 
Let $R(G)$ denote the Grothendieck group of smooth $\bC$-representations of $G$. Let $\C B(G)$ be the (modified) Bernstein center of $G$, as in \cite{BK}. Then we have a decomposition $R(G)=\prod_{\fk s\in \C B(G)} R^{\fk s}(G).$ 

If $P$ is a ($k$-rational) parabolic subgroup of $G$, with Levi decomposition $P=MN$, let $i_M:R(M)\to R(G)$ and $r_M:R(G)\to R(M)$ denote the functor of normalized parabolic induction and the normalized Jacquet functor, respectively, see for example \cite{BDK}, \cite{Be}.

Analogous with the Hecke algebras definition \ref{d:Hecke-rigid}, we define the rigid quotient of $R(G).$

\begin{definition}\label{d:group-rigid}
Set
\begin{equation}
\begin{aligned}
R(G)_{\twist}=\text{span}\{i_M(\sigma)-i_M(\sigma\circ\chi):&~ P=MN\text{ parabolic subgroup,}\\
&~\sigma\in
R(M),~ \chi\in X_u(M) \}
\end{aligned}
\end{equation}
and define the rigid quotient of $R(G)$ to be
\begin{equation}
\bar R(G)_{\rig}:=R(G)/R(G)_{\twist}.
\end{equation}
If $\fk s\in \C B(G)$, let $\bar R^{\fk s}(G)_\rig$ be the image of the Bernstein component $R^{\fk s}(G)$ in $\bar R(G)_\rig.$
\end{definition}

\subsection{} We explain the role of the rigid cocenter in the correspondence of unitarizable representations from the $p$-adic group to Iwahori-Hecke algebras. Let $G$ satisfy the same assumptions as in \cite{Bo}.

Let $I$ be an Iwahori subgroup of $G$. Let $\C C_I(G)$ be the category of smooth $G$-representations generated by their $I$-fixed vectors. By a classical result of Casselman, 
$$\C C_I(G)=R^0(G),$$
where $R^0(G)$ is the Bernstein component where the simple objects are sub quotients of unramified minimal principal series.

Let $\CH=C^\infty_c(I\backslash G/I)$ be the Iwahori-Hecke algebra, i.e., the complex associative unital algebra of compactly supported, smooth, $I$-biinvariant complex functions, under the convolution with respect to a fixed Haar measure $\mu$ on $G$. Normalize the Haar measure so that $\mu(I)=1.$ Given a representation $(\pi,V)\in \C C_I(G),$ the algebra $\CH$ acts on $V^I$ via
$$\pi(f)v=\int_G f(x)\pi(x)v~d\mu(x),\quad f\in\CH,\ v\in V^I.$$
Borel \cite{Bo} showed that the functor 
\begin{equation}\label{borel}
F_I:~\C C_I(G)\to \CH\text{-mod},\quad V\mapsto V^I
\end{equation}
is an equivalence of categories.

A smooth admissible $G$-representation $(\pi,V)$ is called hermitian if $V$ has a non degenerate hermitian form $\langle~,~\rangle_G$ which is $G$-invariant, i.e., $\langle\pi(g)v,w\rangle_G=\langle v,\pi(g^{-1})w\rangle_G,$ for all $g\in G$, $v,w\in V$. A hermitian representation is called (pre)unitary if the hermitian form is positive definite.

In parallel, we have the similar definitions for $\C H$-modules. Let $*:\C H\to \C H$ denote the conjugate-linear anti-involution given by $f^*(g)=\overline{f(g^{-1})},$ $f\in \C H,$ $g\in G.$ Then an $\CH$-invariant hermitian form on a $\CH$-module $(\pi, V^I)$ has the defining property $\langle \pi(f) v, w\rangle_{\C H}=\langle v, \pi(f^*)w\rangle_{\C H}.$

By restriction to $I$-fixed vectors, it is clear that if $(\pi,V)\in \C C_I(G)$ is hermitian (resp., unitary), then $(\pi,V^I)\in \CH\text{-mod}$ is hermitian (reps., unitary). It is also easy to see that if $(\pi,V^I)$ is hermitian, then $(\pi,V)$ is hermitian, see \cite[(2.10)]{BM1}. The difficult implication is the following:

\begin{theorem} Let $(\pi,V)\in \C C_I(G)$ be an irreducible hermitian $G$-representation. If $V^I$ is a unitary $\CH$-module, then $V$ is a unitary $G$-representation.
\end{theorem}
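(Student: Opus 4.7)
The plan is to adapt the Vogan--Barbasch--Moy signature character strategy, using Theorem~\ref{t:rigid-pairing} to bypass the endoscopic reduction to real infinitesimal character employed in \cite{BM2}. First, I would set up a Vogan-type signature character formula for $(\pi,V)$. The $G$-invariant form $\langle~,~\rangle_G$ restricts to an $\CH$-invariant hermitian form on $V^I$ and further to finite-dimensional forms on each $V^K$ for $K\supseteq I$ a maximal parahoric subgroup. Writing $V = V^+ \oplus V^-$ as an orthogonal decomposition, the signature character $\sigma(V)(h) := \tr(\pi(h)|_{V^+}) - \tr(\pi(h)|_{V^-})$, restricted to the subalgebra $\sum_K \CH(K//I)\subset \CH$, should be expressible as an integer combination $\sum_j a_j\,\tr(\tau_j)$, where $\{\tau_j\}$ is a basis of $\bar R(\CH)_\rig$ chosen to consist of tempered $\CH$-modules (such a basis exists via the section of Corollary~\ref{c:rigid-section}, since elliptic tempered representations span the elliptic quotient of each semisimple parabolic piece).

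The structural input from the rigid theory is twofold. First, the rigid cocenter $\bar\CH^\rig$, being spanned by $\{T_\co : \nu_\co = 0\}$, lies inside the image in $\bar\CH$ of $\sum_K \CH(K//I)$: every class with zero Newton point contains a finite-order element (since $w^N$ is trivial when $\nu_w = 0$), which admits minimal length representatives in some finite parahoric Weyl subgroup $\wti W_{K'}$, $K'\subsetneq \Pi^a$. Second, by Theorem~\ref{t:rigid-pairing}(1), the characters $\tr(\tau_j)$ are therefore linearly independent as functionals on $\sum_K \CH(K//I)$.

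Assuming $V^I$ is $\CH$-unitary, the form is positive on $V^I$, hence on each $V^K\subset V^I$; and for any $h\in\CH(K//I)$, the operator $\pi(h)|_{V^I}$ factors through $V^K$, so $\sigma(V)(h)=\tr(V)(h)$ on $\CH(K//I)$ for every maximal parahoric $K$. Comparing the Vogan expansion with the trace character expansion of $V$ in the same rigid basis, and invoking the linear independence above, forces the signature coefficients to agree with the multiplicities of $V$ in $\bar R(\CH)_\rig$. This equality collapses the negative-definite part, $V^-=0$, and yields unitarity of $V$.

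The main obstacle is the first step: producing the Vogan-type signature expansion in terms of tempered modules, uniformly across all maximal parahorics. The original Barbasch--Moy argument handles only the hyperspecial $K_0$ and uses endoscopy to reduce to real infinitesimal character; our approach eliminates that reduction by using the rigid quotient to treat all parahorics at once, but the foundational signature expansion (based on Langlands data and signatures of intertwining operators on standard modules) still requires careful construction and is the technical heart of the argument.
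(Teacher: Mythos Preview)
Your proposal is correct and follows essentially the same strategy as the paper's proof: choose a tempered basis of $\bar R(\CH)_\rig$, invoke the Vogan--Barbasch--Moy signature expansion (uniform in $K$) via Jantzen filtration arguments, observe that the rigid cocenter is covered by the parahoric subalgebras $\CH(K//I)$, and then use the perfect pairing of Theorem~\ref{t:rigid-pairing}(1) to force the negative part of the signature to vanish. The only cosmetic difference is that the paper tracks the positive and negative parts separately as a pair $(a_i,b_i)$ and shows directly that $\sum_i b_i\,\theta_{\CH_K}(V_i^I)=0$ implies $b_i=0$, whereas you package things as the single difference $\sigma(V)=\tr^+ -\tr^-$ and compare it to the ordinary character; these are equivalent, and your identification of the Jantzen-type signature expansion as the main technical input is exactly right.
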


The above theorem was proven in \cite{BM1} under the assumption that $G$  is split of adjoint type and that $V$ has "real infinitesimal character". The latter assumption was removed in \cite{BM2} using a reduction to graded Hecke algebras and endoscopic groups. The assumption that $G$ be split adjoint was removed in \cite{BC} (where the theorem was generalized to other Bernstein components as well), but the argument still involved the reduction to real infinitesimal character. In the next subsection, we explain that the present result on the rigid cocenter, Theorem \ref{t:rigid-pairing}(1), allows for a direct extension of the \cite{BM1} argument to arbitrary representations $V$ with Iwahori fixed vectors for any semisimple connected $k$-group $G$ in the sense of \cite{Bo}. This allows one to bypass the reduction to real infinitesimal character from \cite{BM2,BC}.

\subsection{} 
Fix a basis $\{\bar V_i\}_{1\le i \le n}$ of $\bar R(\C H)_\rig$ consisting of genuine finite-dimensional $\C H$-modules. Invoking Langlands classification or Corollary \ref{c:rigid-section}, it is clear that we may choose $\bar V_i$ so that they are all tempered $\CH$-modules. Under the functor (\ref{borel}), there exist smooth admissible $G$-representation $V_i$ such that $V_i^I=\bar V_i.$ It is well-known that $F_I$ induces a bijection between tempered representations in the two categories, thus $V_i$ are tempered $G$-representations. In fact, $F_I$ induces a homeomorphism between the supports of the Plancherel measures in the two categories, see \cite{BHK}, but we will not need this more precise statement.

Since the functor (\ref{borel}) commutes with parabolic induction, see the discussion around \cite[Theorem 6.1]{BM1}, we immediately have that $\{V_i\}$ are a basis of $\bar R^0(G)_\rig$. 

Let $(\pi,V)\in \C C_I(G)$ be an irreducible, hermitian representation with $G$-invariant hermitian form $\langle~,~\rangle_G$. Let $K\supset I$ be a maximal parahoric subgroup. Following \cite{Vo}, one defines the $K$-signature of $V$ as follows. Let $(\sigma,U_\sigma)$ be a smooth irreducible $K$-representation with an implicitly fixed positive definite $K$-invariant form. The $K$-character of $V$ is the formal sum
\begin{equation}
\theta_K(V)=\sum_{\sigma\in \widehat K} m_\sigma\sigma,\quad \text{where }m_\sigma=\dim\Hom_K[U_\sigma,V]<\infty.
\end{equation}
Then the form $\langle~,~\rangle_G$ induces a form on the isotopic component $\Hom_K[U_\sigma,V]$ of $\sigma$ in $V$ (if this is nonzero), and this form has signature $(p_\sigma,q_\sigma).$ Of course, $p_\sigma+q_\sigma=m_\sigma.$ The $K$-signature character of $V$ is the formal combination
\begin{equation}
\Sigma_K(V)=(\sum_{\sigma\in\widehat K} p_\sigma \sigma, \sum_{\sigma\in\widehat K} q_\sigma \sigma).
\end{equation}

Since $\{V_i\}$ is a basis of $\bar R^0(G)_\rig$, in particular, we have the following sharpening of \cite[Theorem 5.3]{BM1}.

\begin{lemma}
For every $K\supset I,$ the $K$-character of any irreducible representation in $\C C_I(G)$ is a linear combination of $\theta_K(V_i)$, $i=1,n$.
\end{lemma}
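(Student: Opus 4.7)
The plan is to show that the $K$-character map $\theta_K$ descends to a well-defined linear map $\bar\theta_K \colon \bar R^0(G)_\rig \to \prod_{\sigma\in\widehat K}\bC\sigma$. Once this is established, the lemma is immediate: since $\{V_1,\dots,V_n\}$ is a basis of $\bar R^0(G)_\rig$, the class in the rigid quotient of any irreducible $V\in \C C_I(G)$ can be written as $[V]=\sum_{i=1}^n c_i[V_i]$, and applying $\bar\theta_K$ yields $\theta_K(V)=\sum_i c_i\theta_K(V_i)$.

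Additivity of $\theta_K$ on short exact sequences of admissible $G$-representations is immediate from the exactness of the $K$-isotypic functor $V\mapsto \Hom_K[U_\sigma, V]$; since the values are finite-dimensional for admissible $V$, $\theta_K$ is already a well-defined additive function on $R^0(G)$. The content of the argument is then to verify that $\theta_K$ vanishes on the twist subspace $R(G)_\twist$ of Definition \ref{d:group-rigid}: for every parabolic $P = MN$, every $\sigma\in R(M)$, and every $\chi\in X_u(M)$, I would show
\begin{equation*}
i_M(\sigma)|_K \;\cong\; i_M(\sigma\circ\chi)|_K \quad \text{as $K$-representations.}
\end{equation*}
For this I appeal to the Mackey decomposition of the restriction of a normalized parabolically induced representation to the compact open subgroup $K$: it is a finite direct sum indexed by representatives $w$ of $P\backslash G/K$, and in each summand the inducing datum enters only through the restriction of $\sigma$ to a compact subgroup $w^{-1}Mw\cap K$ of a conjugate Levi.

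Since $\chi$ is unramified, it is trivial on the subgroup $M^0$ generated by the compact open subgroups of $M$, and conjugation by $w$ preserves this property (sending $M^0$ to $(w^{-1}Mw)^0$); in particular ${}^w\!\chi$ is trivial on the compact subgroup $w^{-1}Mw\cap K$. Therefore the Mackey summands for $\sigma$ and for $\sigma\circ\chi$ agree term by term, giving $i_M(\sigma)|_K \cong i_M(\sigma\circ\chi)|_K$ and hence equality of $K$-characters. The only technical point is the bookkeeping for the Mackey decomposition of restricted parabolic induction to a maximal parahoric; once this standard fact is in hand, the proof is a direct consequence of the definitions, with the essential mechanism being that unramified twists of a Levi are invisible on compact subgroups, while the $K$-structure of an induced module only sees $\sigma$ on such compact subgroups.
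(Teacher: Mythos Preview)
Your proposal is correct and follows the same idea the paper leaves implicit: the lemma is stated as an immediate consequence of $\{V_i\}$ being a basis of $\bar R^0(G)_\rig$, which presupposes exactly what you prove, namely that $\theta_K$ factors through the rigid quotient. Your Mackey argument that unramified twists are invisible upon restriction to a compact open subgroup is the standard way to verify this, and is the $p$-adic group analogue of the argument the paper gives for the Hecke algebra in the first paragraph of the proof of Theorem~\ref{rigid-cocenter} (where finiteness of the parahoric Weyl group plays the role of compactness of $K$).
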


The same Jantzen filtration arguments as in \cite[section 5]{BM1} lead then to a sharpened version of \cite[Theorem 5.3]{BM1}, which is the $p$-adic analogue of Vogan's signature theorem \cite{Vo}.

\begin{theorem}[cf. {\cite[Theorem 5.3]{BM1}}]\label{t:sig}  Let $\{V_i\}_{1\le i\le n}$ be the fixed basis of $\bar R^0(G)_\rig$ constructed above, consisting of tempered representations. Let $(\pi,V)$ be an arbitrary irreducible hermitian $G$-representation in $\C C_I(G).$ There exist integers $a_i,b_i$ such that, for every $K\supset I$, the $K$-signature character of $V$ equals
\begin{equation}
\Sigma_K(V)=(\sum_{i=1}^n a_i \theta_K(V_i),\sum_{i=1}^n b_i \theta_K(V_i)).
\end{equation}
\end{theorem}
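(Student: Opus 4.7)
The plan is to run the Jantzen filtration / signature deformation argument of \cite[Section 5]{BM1} essentially verbatim, substituting the preceding lemma (which provides a decomposition of $K$-characters of irreducibles in $\C C_I(G)$ valid simultaneously for every maximal parahoric $K \supset I$) for the $K_0$-character linear-independence step used in \cite{BM1,BM2,BC}. First, I would deform $(\pi,V)$ by twisting the hermitian form along a one-parameter family of unramified characters of an appropriate Levi subgroup, as in \cite{BM1}. At a generic endpoint of the deformation one obtains a definite form on a unitary tempered module, and the signature of $V$ at the original parameter can be tracked across the deformation via the Jantzen filtration. This produces virtual representations $V^+, V^- \in R^0(G)$, each an integer combination of composition factors of tempered representations in $\C C_I(G)$, such that $\Sigma_K(V) = (\theta_K(V^+), \theta_K(V^-))$ for every $K \supset I$. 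Crucially, $V^{\pm}$ are defined once and for all from $V$, with no reference to $K$.

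Next, I would project $V^{\pm}$ to $\bar R^0(G)_\rig$ and expand in the fixed tempered basis, writing $V^+ \equiv \sum_{i=1}^n a_i V_i$ and $V^- \equiv \sum_{i=1}^n b_i V_i$ modulo $R(G)_\twist$, with integer coefficients $a_i, b_i$; integrality follows because $\{V_i\}$ is a basis and the composition factors appearing in $V^\pm$ already lie in $\C C_I(G)$, to which the lemma applies.

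The essential new input is the assertion that for every maximal parahoric $K$ containing $I$, the $K$-character map $\theta_K$ restricted to $R^0(G)$ factors through $\bar R^0(G)_\rig$. Via Borel's equivalence (\ref{borel}), $V|_K$ for $V \in \C C_I(G)$ is determined (among $K$-types $\sigma$ with $\sigma^I \neq 0$) by the module structure of $V^I$ over the parahoric Hecke subalgebra $\CH(K//I) = C^\infty_c(I \backslash K / I) \subset \CH$. Since $K$ is parahoric, $\CH(K//I)$ is a finite-type Hecke algebra, spanned by $\{T_w : w \in W_K\}$ for a finite Weyl group $W_K \subset \wti W$; every such $w$ has finite order, and its conjugacy class in $\wti W$ has zero Newton point, so $T_w \in \bar\CH^\rig$. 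By Theorem \ref{rigid-cocenter} (equivalently Theorem \ref{t:rigid-pairing}(1)), the trace of each $T_w$ vanishes on $R(\CH)_\twist$, so $\theta_K$ annihilates the image of $R(G)_\twist$ in $R^0(G)$. Combining this with the previous step yields $\theta_K(V^+) = \sum_i a_i \theta_K(V_i)$ and $\theta_K(V^-) = \sum_i b_i \theta_K(V_i)$ for every $K$, with the same integers $a_i, b_i$, which is the desired conclusion.

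The main obstacle is the third step: one must identify cleanly the image of $R(G)_\twist \cap R^0(G)$ under $F_I$ with $R(\CH)_\twist$, and verify that the parahoric subalgebra $\CH(K//I)$ sits inside $\bar\CH^\rig$ after passing to the cocenter. Once this compatibility between the group-level and Hecke-algebra-level rigid structures is established, the remainder is routine Jantzen filtration bookkeeping from \cite{BM1}, extended to an arbitrary semisimple connected $k$-group in the sense of \cite{Bo} exactly as in \cite{BC}; in particular, the technical reduction to real infinitesimal character required in \cite{BM2,BC} can be dispensed with entirely, since the rigid quotient provides the missing simultaneous linear independence across all $K \supset I$.
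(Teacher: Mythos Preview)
Your proposal is correct and follows essentially the same approach as the paper. The paper's proof consists of the single sentence ``The same Jantzen filtration arguments as in \cite[section 5]{BM1} lead then to a sharpened version of \cite[Theorem 5.3]{BM1},'' and your write-up is a faithful unpacking of this: you invoke the Jantzen deformation to produce $K$-independent virtual modules $V^{\pm}$, then use the preceding lemma together with the fact (proved in the first paragraph of Theorem \ref{rigid-cocenter}) that $T_w\in\bar\CH^\rig$ for $w$ in a finite parahoric subgroup, so that $\theta_K$ factors through the rigid quotient and the coefficients $a_i,b_i$ are independent of $K$.
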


We emphasize that in Theorem \ref{t:sig}, the integers $a_i,b_i$ do not depend on the choice of $K$, but only on $V$.

\subsection{}
The analogous definitions to $K$-character and $K$-signature exist for $\CH$-modules. Let $\CH_K=C^\infty(I\backslash K/I)$ be the subalgebra of $\CH$ consisting of functions whose support is in $K$. If $(\sigma,U_\sigma)$ is a $K$-type, then $(\sigma,U_\sigma^I)$ is an $\CH_K$-module.
If $V^I$ is a simple $\CH$-module, then the $\CH_K$-character of $V^I$ is
\begin{equation}
\theta_{\CH_K}(V^I)=\sum_{\sigma\in \widehat K,U_\sigma^I\neq 0} m_\sigma\sigma,
\end{equation}
and the $\CH_K$-signature character is
\begin{equation}
\Sigma_{\CH_K}(V^I)=(\sum_{\sigma\in\widehat K,U_\sigma^I\neq 0} p_\sigma \sigma, \sum_{\sigma\in\widehat K,U_\sigma^I\neq 0} q_\sigma \sigma).
\end{equation}
Theorem \ref{t:sig} implies that the $\CH_K$-signature character of $V^I$ is
\begin{equation}\label{e:sig-H}
\Sigma_{\CH_K}(V^I)=(\sum_{i=1}^n a_i \theta_{\CH_K}(V_i^I),\sum_{i=1}^n b_i \theta_{\CH_K}(V_i^I)),
\end{equation}
with the same integers $a_i,b_i$ as in Theorem \ref{t:sig}.

\smallskip

Now we can complete the argument. Suppose that $V^I$ is unitary as an $\CH$-module. This means that the negative part of the $K$-signature must be $0$, for all $K.$ From (\ref{e:sig-H}), this means that
\begin{equation}
\sum_{i=1}^n b_i \theta_{\C H_K}(V_i^I)=0,
\end{equation}
for all $K$. If $T_\co$ is a basis element of $\bar\CH^\rig$ as in Theorem \ref{t:basis}, there exists $K\supset I$ such that $T_\co$ is the delta function supported at a double coset $I\wti wI,$ $\wti w\in K.$ Therefore $$\sum_{i=1}^n b_i\tr(T_\co, V_i^I)=0$$ for all basis elements $T_\co$ of the rigid cocenter $\bar \CH^\rig.$ Recall that $\{V_i^I\}$ form a basis of $\bar R(\CH)_\rig$. But then Theorem \ref{t:rigid-pairing} implies that they are linearly independent over $\bar \CH^\rig$, and therefore $b_i=0$ for all $i=1,n$. 

Finally, this means that the negative part of the signature $\Sigma_K(V)$ is $0$ in Theorem \ref{t:sig}, and so, $V$ is unitary as a $G$-representation. This completes the implication:
$$\text{if }V^I \text{ is a unitary $\CH$-module, then } V\text{ is a unitary $G$-representation}.$$


\end{document}